\documentclass[twoside,leqno]{article}
\usepackage{amsmath}
\usepackage{amsthm}
\usepackage{amssymb}
\usepackage{amscd}
\usepackage{graphicx}
\usepackage{epsfig}

\usepackage{latexsym}
\usepackage{amsfonts}
\input xy
\xyoption{all} \tolerance=500

\widowpenalty=10000 \clubpenalty=10000 \raggedbottom\oddsidemargin .1in \evensidemargin .1in \marginparwidth
=1in \textwidth 16cm \topmargin 5mm\textheight24cm \advance\topmargin by -\headheight\advance\topmargin by
-\headsep




\numberwithin{equation}{section} \allowdisplaybreaks

\newtheorem{theorem}{Theorem}[section]

\newtheorem{proposition}{Proposition}[section]
\newtheorem{corollary}{Corollary}[section]

\theoremstyle{definition}
\newtheorem{definition}{Definition}[section]
\newtheorem{example}{Example} [section]

\newtheorem{remark}{Remark}[section]

\font\black=cmbx10 \font\sblack=cmbx7 \font\ssblack=cmbx5 \font\blackital=cmmib10  \skewchar\blackital='177
\font\sblackital=cmmib7 \skewchar\sblackital='177 \font\ssblackital=cmmib5 \skewchar\ssblackital='177
\font\sanss=cmss10 \font\ssanss=cmss8 scaled 900 \font\sssanss=cmss8 scaled 600 \font\blackboard=msbm10
\font\sblackboard=msbm7 \font\ssblackboard=msbm5 \font\caligr=eusm10 \font\scaligr=eusm7 \font\sscaligr=eusm5
 \font\fraktur=eufm10 \font\sfraktur=eufm7 \font\ssfraktur=eufm5

\font\bsymb=cmsy10 scaled\magstep2
\def\all#1{\setbox0=\hbox{\lower1.5pt\hbox{\bsymb
       \char"38}}\setbox1=\hbox{$_{#1}$} \box0\lower2pt\box1\;}
\def\exi#1{\setbox0=\hbox{\lower1.5pt\hbox{\bsymb \char"39}}
       \setbox1=\hbox{$_{#1}$} \box0\lower2pt\box1\;}

\def\tx#1{{\fam0\relax#1}}

\newfam\bifam
\textfont\bifam=\blackital \scriptfont\bifam=\sblackital \scriptscriptfont\bifam=\ssblackital

\newfam\blfam
\textfont\blfam=\black \scriptfont\blfam=\sblack \scriptscriptfont\blfam=\ssblack

\newfam\bbfam
\textfont\bbfam=\blackboard \scriptfont\bbfam=\sblackboard \scriptscriptfont\bbfam=\ssblackboard

\newfam\ssfam
\textfont\ssfam=\sanss \scriptfont\ssfam=\ssanss \scriptscriptfont\ssfam=\sssanss
\def\sss#1{{\fam\ssfam\relax#1}}

\newfam\clfam
\textfont\clfam=\caligr \scriptfont\clfam=\scaligr \scriptscriptfont\clfam=\sscaligr

\newfam\frfam
\textfont\frfam=\fraktur \scriptfont\frfam=\sfraktur \scriptscriptfont\frfam=\ssfraktur

\def\hpb#1{\setbox0=\hbox{${#1}$}
    \copy0 \kern-\wd0 \kern.2pt \box0}
\def\vpb#1{\setbox0=\hbox{${#1}$}
    \copy0 \kern-\wd0 \raise.08pt \box0}

\def\pmb#1{\setbox0\hbox{${#1}$} \copy0 \kern-\wd0 \kern.2pt \box0}
\def\pmbb#1{\setbox0\hbox{${#1}$} \copy0 \kern-\wd0
      \kern.2pt \copy0 \kern-\wd0 \kern.2pt \box0}
\def\pmbbb#1{\setbox0\hbox{${#1}$} \copy0 \kern-\wd0
      \kern.2pt \copy0 \kern-\wd0 \kern.2pt
    \copy0 \kern-\wd0 \kern.2pt \box0}
\def\pmxb#1{\setbox0\hbox{${#1}$} \copy0 \kern-\wd0
      \kern.2pt \copy0 \kern-\wd0 \kern.2pt
      \copy0 \kern-\wd0 \kern.2pt \copy0 \kern-\wd0 \kern.2pt \box0}
\def\pmxbb#1{\setbox0\hbox{${#1}$} \copy0 \kern-\wd0 \kern.2pt
      \copy0 \kern-\wd0 \kern.2pt
      \copy0 \kern-\wd0 \kern.2pt \copy0 \kern-\wd0 \kern.2pt
      \copy0 \kern-\wd0 \kern.2pt \box0}


\newenvironment{pf}{{\noindent{\it Proof. }}}{\ \rule{2mm}{2.5mm}\medskip}


\mathchardef\za="710B  
\mathchardef\zb="710C  
\mathchardef\zg="710D  
\mathchardef\zd="710E  
\mathchardef\zve="710F 
\mathchardef\zz="7110  
\mathchardef\zh="7111  
\mathchardef\zvy="7112 
\mathchardef\zi="7113  
\mathchardef\zk="7114  
\mathchardef\zl="7115  
\mathchardef\zm="7116  
\mathchardef\zn="7117  
\mathchardef\zx="7118  
\mathchardef\zp="7119  
\mathchardef\zr="711A  
\mathchardef\zs="711B  
\mathchardef\zt="711C  
\mathchardef\zu="711D  
\mathchardef\zvf="711E 
\mathchardef\zq="711F  
\mathchardef\zc="7120  
\mathchardef\zw="7121  
\mathchardef\ze="7122  
\mathchardef\zy="7123  
\mathchardef\zf="7124  
\mathchardef\zvr="7125 
\mathchardef\zvs="7126 
\mathchardef\zf="7127  
\mathchardef\zG="7000  
\mathchardef\zD="7001  
\mathchardef\zY="7002  
\mathchardef\zL="7003  
\mathchardef\zX="7004  
\mathchardef\zP="7005  
\mathchardef\zS="7006  
\mathchardef\zU="7007  
\mathchardef\zF="7008  
\mathchardef\zW="700A  

\newcommand{\be}{\begin{equation}}
\newcommand{\ee}{\end{equation}}
\newcommand{\ra}{\rightarrow}

\newcommand{\bea}{\begin{eqnarray}}
\newcommand{\eea}{\end{eqnarray}}
\newcommand{\beas}{\begin{eqnarray*}}
\newcommand{\eeas}{\end{eqnarray*}}
\def\*{{\textstyle *}}
\newcommand{\R}{{\mathbb R}}

\newcommand{\C}{{\mathbb C}}
\newcommand{\Z}{{\mathbb Z}}
\newcommand{\N}{{\mathbb N}}
\newcommand{\K}{{\mathbb K}}

\newcommand{\we}{\wedge}
\newcommand{\nn}{\nonumber}
\newcommand{\ot}{\otimes}

\newcommand{\pa}{\partial}
\newcommand{\ti}{\times}
\newcommand{\A}{{\cal A}}
\newcommand{\Li}{{\cal L}}

\newcommand{\ad}{{\rm ad}}

\newcommand{\Ll}{{\pounds}}

\def\ran{\rangle}

\def\Ker{\operatorname{Ker}}
\def\im{\operatorname{Im}}
\def\DO{\operatorname{DO}}

\def\ute{{\bar{\zt}}}
\def\uP{{\bar{P}}}
\def\cA{{\cal A}}

\def\cE{{\cal E}}

\def\cM{{\cal M}}

\def\cJ{{\cal J}}
\def\cT{{\cal T}}

\def\cX{{\cal X}}
\def\Po{{\stackrel{o}{P}}}

\def\Pe{{\stackrel{e}{P}}}

\newcommand{\ao}{\mathbf{1}^n}
\def\wh{\widehat}
\def\wt{\widetilde}
\def\ol{\overline}

\def\Sec{\sss{Sec}}

\def\la{\langle}
\def\ran{\rangle}
\def\fp{\mathfrak{p}}


\def\sD{{\sss D}}

\def\sJ{{\sss J}}

\def\sT{{\sss T}}

\def\xd{\tx{d}}
\def\xi{\tx{i}}

\def\dt{\xd_{\sss T}}

\def\dTs{\xd_{\sss T}^*}
\def\dts{\sT^*}
\newcommand{\BB}[2]{\ensuremath{\{\!\!\{ #1 , #2 \}\!\!\}_\cM}}
\newcommand{\BBP}[2]{\ensuremath{\{\!\!\{ #1 , #2 \}\!\!\}}}


\setcounter{page}{1} \thispagestyle{empty}


\pagestyle{myheadings}  \markboth{Janusz Grabowski}{Graded contact manifolds and contact Courant algebroids}

\begin{document}

\title{Graded contact manifolds \\ and 
contact Courant algebroids
\thanks{Research
supported by the Polish Ministry of Science and Higher Education under the grant N N201 416839.} }
\author{Janusz Grabowski}
\date{}

\maketitle
\begin{abstract}
We develop a systematic approach to contact and Jacobi structures on graded supermanifolds. In this framework, contact structures are interpreted as symplectic principal $\R^\ti$-bundles. Gradings compatible with the $\R^\ti$-action lead to the concept of a graded contact manifold, in particular a linear (more generally,  $n$-linear) contact structure. Linear contact structures are proven to be exactly the canonical contact structures on first jets of line bundles. They provide linear Kirillov (or Jacobi) brackets and give rise to  the concept of a Kirillov algebroid, an analog of a Lie algebroid, for which the corresponding cohomology operator is represented not by a vector field (de Rham derivative) but a first-order differential operator. It is shown that one can view Kirillov or Jacobi brackets as homological Hamiltonians on linear contact manifolds. Contact manifolds of degree 2 are studied, as well as contact analogs of Courant algebroids. We define lifting procedures that provide us with constructions of canonical examples of the structures in question.

\medskip\noindent
\textit{MSC 2000: 58A50, 53D05, 53D10 (Primary); 53D17, 53D35, 58C50, 17B62, 17B63 (Secondary).}

\medskip\noindent
\textit{Key words: contact structures, principal bundles, symplectic manifolds, double vector bundles, graded
manifolds, supermanifolds, Poisson brackets, Lie algebroids,  Courant algebroids, jet bundles.}

\end{abstract}
\setcounter{section}{0}

\section{Introduction}
The role played in geometry by symplectic structures cannot be overestimated. The theory of symplectic and Poisson
manifolds, together with the theory of (pseudo)Riemannian ones, are two legs of the contemporary differential
geometry. It is not only because of the fact that each other really important geometric structure has its
symplectic origins, but also because of the prominent position of symplectic forms and Poisson brackets in
physics, especially in classical mechanics, quantum field theory, and quantization. Actually,
supergeometric versions of symplectic structures appeared first in the physics literature before being recognized by
mathematicians. In particular, graded symplectic supermanifolds have been known to physicists since the 1970s,
providing framework for the so-called {\it BRST formalism}. Its main geometrical background consists of an odd
symplectic structure such that the fields and their respective anti-fields are conjugate with respect to the
corresponding odd Poisson bracket $(\cdot, \cdot)$ (the `antibracket'), and an action functional $S$ obeying
what is called the {\it classical master equation} $\{ S,S\}=0$.

In the early 1980s Batalin and Vilkovisky \cite{BV} developed a generalization of the BRST procedure in terms
of so called {\it Batalin-Vilkovisky algebras} which allows us, in principle, to handle symmetries of
arbitrary complexity, while a detailed mathematical study of the classical BRST algebra and its quantization
was undertaken in the late 1980s by Kostant and Sternberg \cite{KS} who related the BRST cohomology to the Lie
algebra cohomology.

In mid 1990s, in turn,  Alexandrov, Kontsevich, Schwarz, and Zaboronsky \cite{AKSZ} found a simple and elegant
procedure (referred to as the {\it AKSZ formalism} now) for constructing solutions to the classical
master equation.  Their approach uses mapping spaces of supermanifolds equipped with  a compatible integer
grading and a differential (see also \cite{Roy1}).

On the other hand, in 1986 Courant and Weinstein \cite{CW,Co} developed the concept of a Dirac structure providing a geometric setting for Dirac's theory of constrained mechanical systems which was then algebraically generalized and applied to integrable systems by Dorfman \cite{Do}. The integrability condition for a Dirac structure makes use of
a natural bracket operation on sections of the {\it Pontryagin bundle} $\cT M=\sT M\oplus_M\sT^\ast M$, the direct
sum of the tangent and the cotangent bundles, known nowadays as the {\it Courant bracket}. The Courant bracket, originally skew-symmetric, does not satisfy the Leibniz rule with respect to the multiplication by functions nor the Jacobi identity.
Its non-symmetric version we will use in the sequel, going back to Dorfman, fulfils the Jacobi identity for the price of skew-symmetry.
The nature of the Courant bracket itself remained unclear until several years later when it was observed by Liu, Weinstein and Xu \cite{LWX} that $\cT M$ endowed with the Courant bracket plays the role of a `double object' in the sense of Drinfeld \cite{Dr} for
a pair of Lie algebroids (see \cite{MX}) over $M$. Let us recall that, in complete analogy with Drinfeld's Lie
bialgebras, in the category of Lie algebroids there also exist `bi-objects', {\it Lie bialgebroids},
introduced by Mackenzie and Xu \cite{MX} as linearizations of Poisson groupoids. It is well known that every
Lie bialgebra has a double which is a Lie algebra that is not true for a general Lie bialgebroid. Instead,
Liu, Weinstein and Xu showed that the double of a Lie bialgebroid is a more complicated structure they call a
{\it Courant algebroid}, the Pontryagin bundle $\cT M$ with the Courant bracket being a special case.

Only recently become clear (see \cite{Roy0, Roy}) that, again, any Courant algebroid is actually a certain
Hamiltonian system on a graded symplectic supermanifold $\cM$. The standard Courant algebroid $\cT M=\sT
M\oplus_M\sT^\ast M$ corresponds in this way to the symplectic supermanifold $\cM=\sT^*\zP\sT M$ with the
Hamiltonian associated with the de Rham vector field $\xd$ on $\zP \sT M$. Note that all the theory of Lie
algebroids or, equivalently, linear Poisson structures can be put into the framework of Courant algebroids.

Our aim in this paper is to develop and study contact versions of these formalisms, therefore including
contact structures on graded supermanifolds. Of course, these structures are well known (see e.g. \cite{Sch0, Sch} or their appearance in the classification of simple Lie superalgebras \cite{Ka}), but these deserve a more detailed study in the context of Courant algebroids.

We show that canonical contact structures, playing the role of the cotangent bundle $\sT^*\cM$ in the symplectic case, are associated not with the trivial bundles $\R\ti\cM$ but with, generally nontrivial, line
bundles $L\to\cM$. As a result, we must work not with derivations,
vector fields, and Poisson brackets, but with first-order differential operators acting on sections of line
bundles and {\it Jacobi brackets} \cite{DLM,Li} or, better to say, {\it Kirillov brackets}, as Kirillov was
the first who studied local Lie brackets on sections of line bundles \cite{Ki}.

Our understanding of a supermanifold will be standard, i.e., we will use an atlas of local coordinates, some of
which are even and some of which are odd. For details we refer to \cite{DM,Rog, Tuy, Var} and references therein. In
this paper we will work also with {\it graded manifolds} in the sense of Voronov  \cite{Vor1} which are supermanifolds with an additional (usually $\Z^n$ or $\N^n$) gradation in the structure sheaf, so that we can choose local coordinates which have
certain weights besides the parity. In such cases there is often a confusion about the relation between
the additional grading (weight) and the $\Z_2$-grading (parity) responsible for the sign rule. We will follow almost literally the Voronov's setting, sharing his point of view that the grading (weight) is not directly related to parity, unless explicitly assumed, and work mainly with $\N^n$-gradings. However, we will not automatically assume, as it is done by Voronov \cite{Vor1}, that the local coordinates of non-zero degree are "cylindrical".   Besides the main reference \cite{Vor1}, some information about graded manifolds one can find, for instance, in first chapters of the book \cite{Tuy} and Mehta \cite{Meh}. Certain  important particular cases can be also found in the works of Kontsevich \cite{Kon}, Roytenberg
\cite{Roy0,Roy1}, \v Severa \cite{Sev}, Voronov \cite{Vor2}, and Grabowski and Rotkiewicz \cite{GR, GR1}.
Since we will not develop a general theory of graded manifolds, we will only point out main subtleties that
differ graded manifolds from standard manifolds, on one hand, and from pure supermanifolds, on the other.

In a standard description, a contact structure on a manifold $M$ is viewed as certain `maximally
non-integrable' one-codimensional subbundle in the tangent bundle $\sT M$. This subbundle is locally the
kernel of a 1-form on a manifold $M$ which induces a rank-one subbundle in $\sT^* M$.  Two such forms are viewed equivalent if they have the same kernel, i.e.,
they differ by a factor which is an invertible function. On the other hand, with every contact or, more
generally, Jacobi structure one associates its {\it symplectification} (resp., {\it Poissonization})
\cite{Li,GIMPU, Sch0}. In our approach, we will often identify the symplectification, which is a
certain symplectic submanifold in the (canonically symplectic) cotangent bundle $\sT^*M$, with the contact structure itself.

Viewing contact structures as particular symplectic manifolds is very convenient in the
context of the graded (super)geometry in which we will work.
Actually, the symplectification will be
understood as a principal $\R^\ti$-bundle $P$ over $\cM$ equipped with an (even) symplectic form which is
homogeneous with respect to the $\R^\ti$-action. Such an object we will call a {\it symplectic principal $\R^\ti$-bundle}. The corresponding {\it Legendre} (called also {\it Lagrange}) {\it  bracket} is a local Lie bracket (we will call such brackets {\it Kirillov brackets}) on sections of the line bundle $L^*$, dual to the line bundle $L$ associated with $P$. We simply interpret sections of $L^*$ as 1-homogeneous functions on $P$ and use the fact that they are closed with respect to the symplectic Poisson bracket. In the case of an even contact form $\za$  on a manifold $\cM$, this principal $\R^\ti$-bundle $P$ is just the one-dimensional subbundle $\la\za\ran$ in $\sT^*\cM$ generated by $\za$, with the 0-section removed. We consider as well {\it odd contact structures} but  in the introduction we will speak about the even case not to produce an additional complexity. Of course, one can consider as well homogeneous Poisson structures on $P$ which give rise to {\it principal Poisson $\R^\ti$-bundles}. In any case, the corresponding Kirillov brackets are, like the Legendre brackets for contact structures, restrictions of principal Poisson brackets to 1-homogeneous functions.

The main observation in this context is that the canonical linear contact structure on the first jet bundle
$\sJ^1L$ of a line bundle $L\to\cM$ can be identified with the cotangent bundle $\sT^*(L^*)^\ti$ of the
principal $\R^\ti$-bundle $(L^*)^\ti$ associated with the dual $L^*$. The cotangent bundle $\sT^*(L^*)^\ti$ has a canonical symplectic embedding into a line subbundle $C_L$ of $\sT^*\sJ^1L$, thus the corresponding Legendre bracket is defined on sections of $C^*_L$. If $L$ is trivial, $L=\R\ti\cM$, then $\sJ^1L=\R\ti\sT^*\cM$ with the canonical contact form $\za=\xd z-p_a\xd x^a$, and the Legendre bracket is the `standard Legendre bracket' of functions on $\R\ti\sT^*\cM$,
$$\{ F,G\}_\za =\frac{\pa F}{\pa{p_a}}\frac{\pa G}{\pa
x^a}-\frac{\pa F}{\pa x^a}\frac{\pa G}{\pa{p_a}}+\frac{\pa F}{\pa z}\left(G-p_a\frac{\pa G}{\pa{p_a}}\right)-\left(F-\frac{\pa F}{\pa{p_a}}p_a\right)\frac{\pa G}{\pa z}\,.
$$
The cotangent bundle
$\sT^*(L^*)^\ti$ is canonically also a symplectic principal $\R^\ti$-bundle over the first jet bundle $\sJ^1L$. Actually, it is a double bundle, as it is simultaneously a vector bundle, and both structures are compatible in the sense that the Euler vector field of the vector bundle structure is 1-homogeneous with respect to the $\R^\ti$-action; we will call such structure a {\it linear principal $\R^\ti$-bundle} and the whole contact structure, when a homogeneous
symplectic form of the bi-degree $(1,1)$ is given, a {\it linear contact structure}. One of important tools will be therefore the theory of $n$-graded (super)manifolds and $n$-graded principal $\R^\ti$-bundles, with {\it double} or, more generally, {\it $n$-tuple  vector bundles} as basic examples. A natural
definition refers to compatibility condition expressed in terms of the commutation of the corresponding
{\it homogeneity structures} represented by the homotheties induced by the weight or Euler vector fields, as
described in \cite{GR,GR1}.

More precisely, an $n$ graded (super)manifold is a (super)manifold $\cM$ equipped
with $n$ homogeneity structures, i.e. smooth actions $h^i:\R\ti\cM\to\cM$, $i=1\dots,n$, of the
monoid $(\R,\cdot)$ of multiplicative reals. The compatibility condition says that all the {\it homotheties} commute, $h^i_t\circ h^j_s=h^j_s\circ h^i_t$, where $h^i_t=h^i(t,\cdot)$. Each homogeneity structure induces an $\N$-gradation on $\cM$ represented by a certain weight vector field, so we obtain an $\N^n$-gradation. For $n=1$, these are just {\it N-manifolds} in the sense of \v{S}evera and Roytenberg, if the weight of a function represents its parity. One difference with a principal $\R^\ti$-action is that $h^i_0$ represents a projection onto a submanifold in $\cM$,
the base of the corresponding fibration, that is not present in the case of an $\R^\ti$-bundle, but we can
consider the same compatibility condition between $\R^\ti$-actions and homogeneity structures. This leads to
the concept of an {\it $n$-graded principal $\R^\ti$-bundle}. Such a bundle, equipped with a 1-homogeneous (with respect
to the $\R^\ti$-action) symplectic form of weight $k\in\N^n$, becomes an {\it $n$-graded contact structure of
degree $k$}. If, additionally, the total weight represents the parity, we speak about a
{\it contact $k$-manifold}. Since there are canonical procedures of lifting weight vector fields and
$\R^\ti$-actions to the tangent and cotangent bundles, we can produce $n$-graded contact structures out of
$(n-1)$-graded principal $\R^\ti$-bundles, in particular, linear contact structures out of pure principal
$\R^\ti$-bundles. For example, starting with the trivial bundle $\R^\ti\ti\cM$, we obtain the canonical contact
structure on $\sT^*\cM\ti\R$.

Analogously one can define {\it $n$-graded}, in particular, {\it linear  principal Poisson $\R^\ti$-bundles}
which, according to a general knowledge, correspond to certain `algebroids', called by us {\it
Kirillov algebroids}. A Kirillov algebroid can be equivalently defined as an `invariant' Lie
algebroid structure on a vector bundle $E$ over a principal $\R^\ti$-bundle $P_0$. If $P_0$ is trivial,
$P_0=\R^\ti\ti\cM$, then we actually deal with what was called a {\it Jacobi algebroid} in \cite{GM1} (or a
{\it generalized Lie algebroid} in \cite{IM}). The corresponding cohomology can be defined analogously to the
Lie algebroid case by means of a homological `$Q$-operator' and, in the spirit of Va{\u\i}ntrob \cite{Va}, a
Kirillov algebroid can be viewed as a `Q-line bundle'. This time, however, $Q$ is not a vector field but a
first-order differential operator. Of course, we can recover in this way some cohomology known in the Jacobi setting, e.g. the {\it Lichnerowicz-Jacobi cohomology}, as particular cases of our construction.

Note that canonical linear  principal Poisson $\R^\ti$-bundles arise as the tangent lifts of Poisson principal $\R^\ti$-bundles (Kirillov brackets). The corresponding Kirillov algebroid can be viewed then as associated with the Kirillov bracket. In particular, a Jacobi bracket on $\cM$ induces in this way a Lie algebroid structure on $\sT^*\cM\ti\R$, as has been observed by Kerbrat and Souici-Benhammadi \cite{KSB} (see also \cite{Vs,GM1}).

A natural contact analog of the theorem stating that any linear symplectic structure (an NQ-manifold of degree
1 in the terminology of \v Severa \cite{Sev}) is equivalent to a cotangent bundle $\sT^*\cM$ equipped with its canonical
symplectic form, is Theorem \ref{tr2} which implies that any linear even (resp., odd) contact structure is isomorphic with the canonical contact structure on the first jet bundle of a line bundle or, equivalently, with the cotangent bundle $\sT^\ast \uP$ (resp., $\zP \sT^\ast \uP$) of a principal $\R^\ti$-bundle $\uP$. This is generalized to $n$-linear
contact structures in the spirit of \cite[Theorem 8.1]{GR}. The supergeometric setting is crucial in interpreting
Kirillov brackets on a line bundle $L$ as bi-homogeneous Hamiltonians on $\zP\sT^*(L^*)^\ti$: 1-homogeneous with respect to the $\R^\ti$-action and quadratic with respect to the linear structure, which are homological with respect to the Poisson (or Legendre) bracket.

A natural problem is then the description of {\it contact 2-manifolds} (contact N-manifolds of degree 2 in
the terminology of Roytenberg and \v Severa)  which turn out to be
symplectic principal $\R^\ti$-bundles of degree 2. A contact analog of the Roytenberg's result \cite[Theorem
3.3]{Roy} says that contact 2-manifolds are in one-to-one correspondence with pseudo-Euclidean principal
$\R^\ti$-bundles (Theorem \ref{cN2}). A canonical example is $\sT^*[2]F[1]$, where $F$ is a linear principal
$\R^\ti$-bundle, more particularly $\sT^*[2]\sT[1](\R^\ti\ti M)$.

Next, in view of the Roytenberg's description of Courant algebroids \cite{Roy1}, 
{\it contact Courant algebroids} are defined as contact 2-manifold equipped with an
$\R^\ti$-homogeneous cubic homological Hamiltonian. The `classical' description is proven to be the following:
a contact Courant algebroid is a vector bundle $\cE$ over a manifold $M$ equipped with

 - a Loday (Leibniz) bracket $\{\cdot,\cdot\}^{\! 1}$ on sections of $\cE$,

 - a pseudo-Euclidean product $\la\cdot,\cdot\ran^{\! 1}$ with values in a line bundle $L$,

 - a vector bundle morphism $\zr^{\! 1}:\cE\ra\DO^1(L,L)$, associating with any section $X$ of $\cE$ a
first-order differential operator $\zr^{\! 1}(X)$ acting on sections of $L$ such that
$$\la \{ X,Y\}^{\! 1},Y\ran^{\! 1}=\la X,\{ Y,Y\}^{\! 1}\ran^{\! 1}\quad\text{and}\quad\zr^{\! 1}(X)\la Y,Y\ran^{\! 1}=2\la\{ X,Y\}^{\! 1},Y\ran^{\! 1}$$
for all sections $X,Y$ of $\cE$. The canonical homogeneous Courant algebroid structure on
$\cT (\R^\ti\ti M)$ gives rise to a canonical Courant-Jacobi algebroid on
$(\R\ti\sT M)\oplus_M(\R^*\ti\sT^* M)$ considered already in \cite{GM2}.

We should admit that some of our observations concerning contact forms and contact structures, especially in the pure even case, can be found spread over the existing literature.
The existence of a correspondence between Jacobi manifolds and degree-one contact NQ-manifolds was previously
mentioned by \v{S}evera \cite{Sev}. More recently, Antunes and Laurent-Gengoux \cite{AL-G} studied Jacobi
structures from the supergeometric point of view. Additionally, Jacobi and contact structures on
supermanifolds were considered in a series of papers by Bruce \cite{Bru, Bru2, Bru3}. Their approaches, however, are more traditional than what we propose here. After writing these
notes we realized also that some of these questions, in the simplest version concerning contact forms and Jacobi
brackets, have been recently studied by Mehta \cite{Meh1}. He mentioned also that developing a general
theory of contact NQ-manifolds in the degree 2 case should provide a natural generalization of Courant
algebroids, together with a `Courantization' process, and that this approach may be useful in studying
Jacobi-Dirac and generalized contact structures \cite{IW, PW}. This is exactly what we propose in this note and
our main results include:

-- the development of a concept of a (symplectic) principal $\R^\ti$-bundle, its compatibility with linear structures and its phase lift;

-- showing that linear symplectic principal $\R^\ti$-bundles are exactly the canonical contact structures of first jets of line bundles and a universal character of the latter leading to a natural concept of super-Jacobi structures and Kirillov  super-brackets;

-- a natural association of Lie algebroids with Poisson principal $\R^\ti$-bundles with a particular example of the Lie algebroid of a Jacobi bracket;

-- the concepts and descriptions of a Kirillov algebroid (with a Jacobi algebroid as a particular example) and a contact Courant algebroid and the corresponding cohomology.

\medskip
Our paper is organized as follows. In section 2 we present rudiments  of the graded differential geometry, define weight vector fields, their homogeneity completeness, and homogeneity structures, as well as even and odd Poisson and Jacobi brackets, and we define lifting procedures for graded structures. Section 3 is devoted to introducing $n$-graded principal $\R^\ti$-bundles which are fundamental objects in our framework. Contact forms on supermanifolds and contact structures are defined in Section 4, where also their equivalence with symplectic principal $\R^\ti$ bundles is proven. Moreover, local forms of  even and odd contact forms on supermanifolds are derived.
Graded, especially linear contact structures are introduced in Section 5, together with a fundamental example associated with first jets of line bundles which is studied deeper in Section 6. In turn, Section 7 is devoted to Jacobi and, more generally, Kirillov brackets. Linear principal Poisson structures as corresponding to contact analogs of Lie algebroids, {\it Kirillov algebroids}, are investigated in Section 8. As a by-product, we present in Section 9 a full description of contact $n$-vector bundles. In particular, all linear contact structures turn out to be just the canonical ones associated with first jet bundles. In Section 10 we provide basic concepts concerning the cohomology of Kirillov algebroids. In Section 11 and 12 we introduce and study contact 2-manifolds and and contact Courant algebroids, contact analogs of symplectic manifolds of degree 2 and Courant algebroids.
\section{Rudiments of graded differential geometry}

\subsection{Graded manifolds and weight vector fields}
A general theory of $\Z$-graded manifolds, which we will follow in this paper, have been developed in \cite{Vor1}.  Of course, we can allow us to work with $\K^n$-graded manifolds, $\K=\Z,\N$. In simple words, a {\it
$\K^n$-graded manifold}  will be understood as a supermanifold $\cM$ with an additional $\Z^n$ (resp., $\N^n$)
grading in the structure sheaf. Besides the degree (parity) $g\in\Z_2=\{ 0,1\}$, we have therefore defined
also the weight $w\in\K^n$ which should be {\it compatible} with the degree in the sense that the subsheaf of
homogeneous functions $F$ of degree $i=(i_1,\dots,i_n)\in\K^n$, $w(F)=i$, splits into subspaces of odd,
$g(F)=1$, and even $g(F)=0$, functions. One can also think that we actually have a $\K^n\ti\Z_2$-grading in
the structure sheaf. Of course, the algebra generated by homogeneous functions should be sufficiently large.
We will assume that there is an atlas of so called {\em affine charts} whose coordinate functions have defined
weight and parity (i.e., they are $\K^n\ti\Z_2$-homogeneous) and the weight and parity are respected by the
change of coordinates.  Thus the space $\cA^i(\cM)$ of functions $f$ which are {\em homogeneous of weight $i$}
(we will write also $w(f)=i$) is well defined and the {\em algebra of homogeneous functions} or a {\it
homogeneous algebra} $\cA(\cM)=\bigoplus_{i\in \K^n}\cA^i(\cM)$ is dense in the algebra $C^\infty(\cM)$ of
smooth functions with respect to a reasonable $C^\infty$-topology. In particular, homogeneous functions span
all finite jets. Morphisms in the category of graded manifolds are smooth maps respecting the grading. The
subalgebra in $\cA(\cM)$ consisting of homogeneous functions in $\N^n$ will be called the {\it polynomial
algebra} of $\cM$.

\medskip
The example of a vector bundle shows that the degree is not directly related to the parity, but we will consider
as well the case where the grading {\it respects the parity}, i.e. where homogeneous functions of weight
$i=(i_1,\dots,i_n)\in\K^n$ are even if the total weight $\vert i\vert =\sum_s\vert i_s\vert$ is even, while
functions with odd total weight are odd. For such structures, called {\it $\K^n$-manifolds}, the
$\Z_2$-grading is induced from the $\K^n$-grading. A $\K^n$-manifold $\cM$ possesses a coordinate atlas (of so
called {\em affine charts}) with local coordinates $(x^a)$ to which {\em weights}
$w_a=w(x^a)=(w_1(x^a),\dots,w_n(x^a))\in\K^n$ are assigned (and respected by the change of coordinates), and
induce the commutation rules
$$x^ax^b=(-1)^{\vert w_a\vert\cdot\vert w_b\vert}x^bx^a.$$ In other words,
$g(x^a)=(-1)^{\vert w_a\vert}$.

Of course, some of the above concepts can be extended  to gradings more general than those with degrees
indexed by $\Z^n$ or $\N^n$. Note that $\N^n$-manifold is a particular case of an $\Z^n$-manifold and
h-complete 1-manifolds of degree $k\in\N$ are N-manifolds of degree $k$ as defined in \v Severa \cite{Sev} and
Roytenberg \cite{Roy1}. Recall that an N-manifold is of degree $k\in \N$ if there is an atlas with coordinates
of weights $\le k$ and that N-manifolds of degree $k\in\N$ we will call just {\it $k$-manifolds}. In what
follows the term {\it manifold} will mean {\it supermanifold}. In this sense an `ordinary' manifold will be an
{\it even manifold}.

Let us observe that the $\K^n$-grading can be also conveniently encoded by means of of the collection of {\em
weight vector fields} which are jointly {\it diagonalizable}, i.e. there is an atlas of charts with local
coordinates in which
\be\label{euler} \zD_\cM^s=\sum_aw^s_ax^a\pa_{x^a}\,,\quad s=1,\dots,n\,,\ee
where $w^s_a=w^s(x^a)\in\K$. Indeed, $f\in\A^i(\cM)$ if and only if $\zD_\cM^s(f)=i_sf$. Each coordinate is
even or odd, so the weight vector fields are even. Each weight vector field $\zD_\cM^s$ defines a submanifold
$\cM_s$ of $\cM$ on which it vanishes. It is defined locally by constrains $x^a=0$ for $w_a\ne 0$ and local
coordinates on $\cM_s$ are induced by those local coordinates on $\cM$ which are of weight 0.

\begin{definition} We say that an $\N^n$-graded manifold is of {\it degree $k\in\N^n$} if local coordinates can be
chosen such that their weights $w^s$ satisfy $w\preceq k$, and of {\it total degree $r\in\N$} if local coordinates can be
chosen such that all the total weights $|w_a|=|w(x^a)|=|w^1_a|+\cdots+|w^n_a|$ do not exceed $r$. Here, the
partial order $\preceq$ is defined by $l\preceq k\Leftrightarrow\forall j\ [l_j\le k_j]$.
\end{definition}

The list of weight vector fields can be completed by the {\it parity diffeomorphism}
\be\label{parity} Par(x^a)=g(x^a)x^a\ee preserving all weight vector fields.

The weight vector fields give rise to one-parameter groups of automorphisms $\zF^s_t$, $s=1,\dots,n$, of the
algebra of homogeneous functions, defined by $\zF^s_u(f)=e^{ui_s}f$ for $f\in\A^i(\cM)$. We can equivalently
write down this action as an action $h^s$ of multiplicative group of positive reals by putting
$h_t^s=\zF^s_{\ln(t)}$. In the case of an $\N^n$-gradation, these automorphisms can be naturally extended to
actions $h^s:\R\ti\A(\cM)\ni(t,f)\mapsto h^s_t(f)\in\A(\cM)$ of the monoid $(\R,\cdot)$ of multiplicative
reals. All this action can be collected into one action $h:\R^n\ti\A(\cM)\ni(t,f)\mapsto h_t(f)\in\A(\cM)$ of
the monoid $(\R^n,\cdot)$. (i.e., $\R^n$ with coordinate-wise multiplication) on the homogeneous algebra via
the algebra homomorphisms,
\be\label{monoid}
h_t(f)=\prod_st_s^{i_s}f,\quad f\in\A^i(\cM)\,.
\ee
In other words, $h_t$ is the composition of pairwise commuting maps $h^s_{t_s}$, $s=1,\dots,n$, where
$h^s_{t_s}$ acts on $\A^i$, $i\in\N^n$, as the multiplication by $t_s^{i_s}$.
\begin{definition} We say that the weight vector field $\zD_\cM^s$ {\it homogeneitically
complete} ({\it h-complete} for short), if the actions $h^s$ can be extended to a smooth action of the monoid
$(\R,\cdot)$ on the supermanifold $\cM$. This means that $h^s_{t}$ is actually a diffeomorphism of $\cM$ if
$t\ne 0$, and it is a smooth surjection on certain submanifold $\cM_s$ if $t= 0$. Such actions we will call
{\it homogeneity structures} (cf. \cite{GR1}). Homogeneitically complete vector fields
$\zD=\sum_aw_ax^a\pa_{x^a}$ with $w_a=0,1$ we will call {\it Euler vector fields}. We say that an
$\N^n$-graded manifold $(\cM,\zD^1_\cM,\dots,\zD^n_\cM)$ is {\it homogeneitically complete} (shortly: {\it
h-complete}), if all its weight vector fields (\ref{euler}) are h-complete, so they define homogeneity structures
$h^1,\dots,h^n$. Homogeneitically complete $\N^n$-manifolds $(\cM,h^1,\dots,h^n)$ of degree $k\in\N^n$  (total
degree $r$) we will call {\it $n$-graded manifolds} (or {\it $n$-graded bundles}) {\it of degree $k$} ({\it
total degree $r$}). If the grading respects the parity, an $n$-graded manifold of degree $k\in\N^n$ we will
call simply a {\it manifold of degree $k$}, a {\it $k$-manifold}, or a {\it $k$-bundle}.
\end{definition}
\begin{remark}\label{r1.2} The reason for calling h-complete graded manifolds `bundles' is that the submanifold $\cM_s$
of zeros of the weight vector field $\zD^s$ is in this case a base of the locally trivial fibration
$h^s_0:\cM\ra\cM_i$. As the homogeneity structures pairwise commute, we have the whole diagram of fibrations:
\be\label{mfib}h_0^{s_1}\circ\cdots\circ h_0^{s_l}:\cM\ra\cM_{s_1,\dots,s_l}=\cM_{s_1}\cap\cdots\cap\cM_{s_l}
\ee
with fibers being {\it homogeneity spaces} (cf. \cite{GR,GR1}).
\end{remark}

\medskip\noindent It is obvious that weight vector fields on an $\N^n$-graded
manifold pairwise commute. Adapting the methods of \cite{GR1} to supermanifolds, one can show that for
h-complete weight vector fields the converse is also true: if they commute they are jointly diagonalizable.
\begin{theorem}
Pairwise commuting actions $h^1,\dots,h^n$ of the monoid $(\R,\cdot)$ on a manifold $\cM$ induce an
$\N^n$-gradation on $\cM$. In particular they are jointly diagonalizable.
\end{theorem}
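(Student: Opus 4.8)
The plan is to reduce to the single-action case and then exploit commutativity. For one homogeneity structure this is precisely the content of the adaptation of \cite{GR1} to supermanifolds announced above: a smooth action $h^s$ of the monoid $(\R,\cdot)$ on $\cM$ makes $\cM$ a graded bundle, and in particular there is an atlas of $\zD^s_\cM$-homogeneous affine coordinates. The analytic core that I would isolate and reuse is the homogeneous decomposition of functions. For a fixed $s$ and a smooth $f$, the monoid law $h^s_t\circ h^s_u=h^s_{tu}$ together with smoothness of $h^s$ at $t=0$ forces the expansion
\[
f\bigl(h^s_t(p)\bigr)=\sum_{i\ge 0}t^i f^{(s)}_i(p),
\]
whose coefficients $f^{(s)}_i$ are smooth of weight $i$ with respect to $h^s$ (that is, $f^{(s)}_i(h^s_t(p))=t^i f^{(s)}_i(p)$); the weights lie automatically in $\N$ because $h^s$ is defined and smooth at $t=0$. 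This yields the grading $\cA(\cM)=\bigoplus_{i\ge 0}\cA^i_s(\cM)$ together with the diagonal weight vector field $\zD^s_\cM$. Since the weight vector fields are even and $t$ is an even parameter, the passage to supermanifolds affects only the bookkeeping.

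Next I would pass from the $n$ separate gradings to a single $\N^n$-grading by means of commutativity. Because $h^s_t\circ h^{s'}_u=h^{s'}_u\circ h^s_t$, each $h^{s'}$ preserves the $h^s$-homogeneous decomposition, so the operators $\zD^1_\cM,\dots,\zD^n_\cM$ pairwise commute on $\cA(\cM)$ and their eigenspace decompositions can be simultaneously refined. Setting
\[
\cA^{(i_1,\dots,i_n)}(\cM)=\bigcap_{s=1}^n\cA^{i_s}_s(\cM)
\]
gives $\cA(\cM)=\bigoplus_{i\in\N^n}\cA^i(\cM)$, which is exactly the asserted $\N^n$-gradation; this establishes the first assertion at the level of the homogeneous algebra.

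The substantial step --- and the one I expect to be the main obstacle --- is joint diagonalizability, namely producing a single atlas of affine charts in which every $\zD^s_\cM$ simultaneously takes the form (\ref{euler}), equivalently coordinates $x^a$ with $h^s_t(x^a)=t^{w^s_a}x^a$ for all $s$. Here I would induct on $n$ along the tower of fibrations (\ref{mfib}) of Remark \ref{r1.2}. Starting from the common base $\cM_{1,\dots,n}=\bigcap_s\cM_s$, I fix coordinates homogeneous for $h^1$ by the single-action normal form; commutativity guarantees that $h^2,\dots,h^n$ preserve the $h^1$-weight layers, so the remaining actions restrict to each layer and one can straighten $h^2$ there without disturbing the $h^1$-homogeneity already achieved, and so on. The delicate point is that the local model of one homogeneity structure is a genuine graded bundle (weights may exceed $1$), so `diagonalization' is not linear algebra but requires the full single-action normal-form theorem; interlacing $n$ of them forces one to check that each successive coordinate adjustment can be chosen inside the common refinement $\bigoplus_{i\in\N^n}\cA^i(\cM)$, which is precisely what commutativity of the homotheties supplies. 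Carrying this bookkeeping through, and tracking parities via the parity automorphism (\ref{parity}), completes the argument.
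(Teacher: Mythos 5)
The paper in fact gives no proof of this theorem at all: it simply asserts that the result follows by ``adapting the methods of \cite{GR1} to supermanifolds,'' and your sketch --- the single-structure normal form from \cite{GR1} as the analytic core, followed by an inductive simultaneous straightening of the remaining actions using commutativity of the homotheties --- is precisely the outline of that adaptation, so you are on essentially the same route as the paper (indeed more explicit than it). The one point to state more carefully is that the expansion $f\bigl(h^s_t(p)\bigr)=\sum_{i\ge 0}t^i f^{(s)}_i(p)$ must be read either as a finite Taylor expansion in $t$ with remainder or as a statement about the polynomial algebra $\cA(\cM)$ only, since a general smooth function need not equal its fiberwise Taylor series; this is exactly how it is handled in \cite{GR1}.
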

As Euler vector fields correspond exactly to vector bundle structures on $\cM$ (see \cite{GR}), an {\it
$n$-vector bundle} can be equivalently characterized as an  $n$-graded manifold of the degree $\ao\in\N^n$ (or
the total degree $1$) \cite{GR}. If $\cM$ is just an ordinary (purely even) manifold, then this concept of an
$n$-vector bundle coincides with that of Pradines \cite{Pr1} and Mackenzie \cite{Mac1a, Mac2} (see also
\cite{KU,Vor2}). Moreover, $n$-manifolds are exactly superanalogs of {\it $n$-tuple homogeneity structures}
(or {\it $n$-tuple graded bundles})  \cite{GR1}.

\begin{remark}\label{r1} Note that, in general, the algebra of homogeneous functions $\A(\cM)$ does
not determine $\cM$. For, consider an ordinary vector bundle $E$ over $M$ viewed as a purely even manifold
$\cM=E$. The vector bundle structure induces a natural $\N$-grading in the subalgebra of $C^\infty(E)$ of
functions which are polynomial along fibers: homogeneous functions of degree $k$ are $k$-homogeneous with
respect to the corresponding Euler vector field $\zD_E$, $\zD_E(f)=kf$. In other words, basic functions have
degree $0$, and functions linear along fibers (i.e., functions associated with sections of the dual bundle
$E^\ast$) have degree 1, and together generate the algebra of polynomial functions. These polynomial
functions, when restricted to an open strip over the zero-section, generate the same polynomial algebra, but
this open strip with restricted polynomials is not diffeomorphic with the vector bundle, as there is no smooth
diffeomorphism of the open strip onto the whole vector bundle respecting degrees of homogeneous functions.
Note that Voronov, Roytenberg, and \v Severa assume from the very beginning that coordinates of degree different from 0 are "cylindrical", i.e. the weight vector field is complete. In general. however, there is no point to exclude manifolds with a general gradation which can be uncomplete.
\end{remark}

The weight defined for functions can be canonically extended to tensor fields on $\cM$: a tensor
field $\zL$ is of weight $i=(i_1,\dots,i_n)$ if $\Ll_{\zD_\cM^s}\zL=i_s\cdot\zL$, $s=1,2,\dots, n$, where
$\Ll$ denotes the Lie derivative. We have in particular  $w(\zD_\cM^s)=0$ which means exactly that the weight
vector fields commute. Similarly, the parity diffeomorphisms can be extended to a map on tensor fields,
mapping a tensor field $K$ to $\overline{K}$. If $K=K^0+K^1$ is the decomposition of the tensor field into the
even and the odd part, then $\ol{K}=K^0-K^1$. Note that, for a vector field $X$ and a one-form $\za$, we have
$\ol{i_X\za}=i_{\ol{X}}\ol{\za}$.

\medskip
One can naturally view the tangent and the cotangent bundles of a $\K^n$-graded manifold $\cM$ as $\K^{n+1}$-graded (see \cite{Vor1}). On a $\K^n$-graded manifold we can also shift the grading with the use of a new weight vector field
$\zD=\sum_aw_ax^a\pa_{x^a}$, which is jointly diagonalizable with the old weight vector fields, by passing to a new collection of weight vector fields: $\zD$ and $\zD_\cM'^{s}=\zD_\cM^s+\zD$, $s=1,\dots,n$. The
corresponding graded supermanifold we will denote $\cM(\zD)$. If $\zD$ is an Euler vector field, we can also
change the parity according to the weights induced by $\zD$. The new atlas is consistent and we get a new
graded supermanifold in which the grading remains the same but the parity has been changed according to the
rule $g'(x^a)=g(x^a)+w^a\ (mod\ 2)$. The corresponding graded supermanifold we will denote $\zP_\zD\cM$. If
the vector bundle structure represented by $\zD$ is clear, we will use the standard notation $\Pi\cM$ instead
of $\zP_\zD\cM$. Of course, we can change both: the grading into the one given by the weight vector fields
$\zD_\cM'^{s}$ and the parity according to the above rule. The corresponding graded supermanifold we will
denote $\cM[\zD]$. The latter operation produces $\K^n$- manifolds from $\K^n$-manifolds. If $\zD$ is the
Euler vector field of a fixed vector bundle structure, we usually write $\cM(r)$ and $\cM[r]$ instead of
$\cM(r\zD)$ and $\cM[r\zD]$, respectively.

We can also construct new gradings using the observation that  the sum of (h-complete) weight vector fields is
a (h-complete) weight vector field again. In particular, any $n$-graded manifold $(\cM,h^1,\dots,h^n)$ is
canonically a $1$-graded manifold $(\cM,h)$: we just add all the weight vector fields and get a weight vector
field corresponding to the composition of all homotheties, $h_t=h^1_t\circ\cdots\circ h^n_t$. In this way we
can view double vector bundles (like $\sT\sT M$ or $\sT^*\sT M$) as 2-manifolds (N-manifolds of degree 2).

\subsection{Brackets}
\begin{definition} An {\it even} (resp., {\it odd}) {\it Jacobi bracket}
in an associative superalgebra $\cA$ with unit 1 is an even (resp., odd) bilinear operation
$\{\cdot,\cdot\}:\cA\ti\cA\ra\cA$ such that
\bea\label{s0} &g\left(\{ \zf,\zc\}\right)=g(\zf)+g(\zc)+ k\ (mod\ 2)\,,\\
\label{s1}&\{ \zf,\zc\}=-(-1)^{(g(\zf)+k)(g(\zc)+k)}\{ \zc,\zf\}
\,,\\
&\{\{ \zf,\zc\},\zg\}=\{ \zf,\{ \zc,\zg\}\}-(-1)^{(g(\zf)+k)(g(\zc)+k)}\{ \zc,\{ \zf,\zg\}\}\,,\label{s2}\\
&\{ \zf,\zg\zc\}=\{\zf,\zg\}\zc+(-1)^{(g(\zf)+k)g(\zg)}\zg\{ \zf,\zc\} -\{\zf,1\}\zg\zc\,, \label{s3}
\eea
where $k=0$ (resp., $k=1$). If $1$ is a central element, we speak about an even (resp., odd) {\it Poisson
bracket}. If $\cA$  is $\K^n$ graded and $k\in\K^n$, we say that the Jacobi bracket is {\it of weight} $k$ if
$w\left(\{ \zf,\zc\}\right)=w(\zf)+w(\zc)+ k$.
\end{definition}
\noindent The property (\ref{s1}) we call {\it super-anticommutativity} and (\ref{s2}) {\it super-Jacobi
identity}. They say together that the bracket is a (super)Lie bracket. The property (\ref{s3}), {\it
generalized super-Leibniz rule}, just tells us that the adjoint map $ad_\zf=\{\zf,\cdot\}:\cA\ra\cA$ is a
first-order differential operator on $\cA$ of parity $g(\zf)+k\ (mod\ 2)$.

It follows (cf. \cite[theorem 12]{GM2}) that the Jacobi bracket is represented by a first-order bidifferential
operator
\be\label{jac} \zL+\zG\cdot I+fI\cdot I\,,
\ee
where $\zL$ is an even (resp., odd) bi-derivation which is (anti)symmetric in an appropriate way, $\zG$ is and
even (resp., odd) derivation, $I$ is the identity understood as a zero-order differential operator, $f$ is 0
(resp., an odd) function, and `$\cdot$' is an appropriate exterior product of first-order linear differential
operators. In the pure even case, $f=0$ and the manifold equipped with the pair $(\zL,\zG)$ is traditionally
called a {\it Jacobi manifold} \cite{Li}.

In fact (see \cite[Theorem 4.2]{Gr92}), if $\cA$ has no nilpotents, then any Lie bracket on $\cA$ given by a
bi-differential operator must be of first order, so is a Jacobi bracket. This is an algebraic generalization of
a geometric result due to Kirillov \cite{Ki}: any local Lie bracket on sections of a line bundle over an even
manifold is of first order with respect to each argument.
This justifies our definition, this time for arbitrary (super)manifolds.
\begin{definition}\label{de1}
An even (resp., odd) Lie bracket defined on sections of an even line bundle $L$ over a manifold $\cM$ which
is a first-order differential operator with respect to each argument will be called an even (resp., odd) {\it
Kirillov bracket}.
\end{definition}
\noindent Obviously, in a local trivialization of the line bundle any Kirillov bracket is represented by a
Jacobi bracket. The Kirillov bracket is even or odd if the local Jacobi bracket is even or odd, respectively.

\begin{remark}\label{rem1} By definition, if $[\cdot,\cdot]_L$ is a Kirillov bracket on sections of a line bundle $L$ over $\cM$ and $\zs$ is a section of $L$, then the adjoint operator
$\ad_\zs=[\zs,\cdot]_L$ is a first-order differential operator from $L$ into $L$, thus a section of the
corresponding vector bundle $\DO^1(L,L)$. There is a canonical isomorphism $\DO^1(L,L)=\sT L^\ti/\R^\ti$,
where $\R^\ti$ is the group of multiplicative real, $L^\ti$ is the principal $\R^\ti$-bundle associated with
$L$ (cf. Section \ref{secprinc}), i.e. $\DO^1(L,L)$ is the {\it Atiyah algebroid} of the principal
$\R^\ti$-bundle $L^\ti$ and sections of $\DO^1(L,L)$ can be viewed as invariant vector fields on $L^\ti$. This
vector bundle is canonically a Lie algebroid with the (super)commutator bracket and the anchor given by
passing to the principal part of a first-order differential operator (which is a vector field on $\cM$) or,
equivalently, as the projection of the invariant vector field on $L^\ti$ onto $\cM$.
\end{remark}

\begin{remark} It is clear that Jacobi brackets are Kirillov brackets on trivial line bundles. In our opinion, there is no particular reason to study Jacobi brackets rather than Kirillov brackets.
First of all, we must stress that even a Jacobi bracket is genuinely a module bracket and the product $\zg\zc$
in (\ref{s2}) should be understood as the left-module product (where $\zg$ is an element of the algebra and
$\zc$ is an element of the left-regular $\cA$-module) rather than the algebra product. This is because of the
role of this product under isomorphism of the Jacobi brackets which do respect the left-regular module
structure and do not respect the algebra structure (see \cite{Gr00}). In particular, the unity 1 regarded as a
module element can be replaced by another invertible element (non-vanishing section) that cannot be done by an
algebra isomorphism. The fact that the product $\zg\zc$ refers to the module and not to the algebra structure
is, of course, hard to be seen in the algebra (trivial bundle) case and becomes obvious only when passing to
general line bundles.

From this point of view, the concept of Jacobi bracket is an auxiliary concept which serves well locally but
has to be naturally extended for an arbitrary line bundle. The concept of a first-order differential operator
works well for modules of sections of such bundles and the distinction of a particular section (constant) is
not needed. This is what differs this situation from the case of Poisson brackets, where we deal with
derivations of associative algebras which are operators vanishing on constants.
\end{remark}

Given a supermanifold $\cM$ we can consider the cotangent bundle $\sT^\ast\cM$ with local Darboux coordinates
$(x^a,p_b)$, where the degree of $x^a$ and that of $p_a$ being the same as the degree of $x^a$ on $\cM$. Note
that vector fields on $\cM$ are represented by functions on $\sT^*\cM$ linear in the fibers, so linear in
$p_b$'s. In this case the canonical symplectic form $\zw_\cM=\xd p_a\xd x^a$ on $\sT^\ast\cM$ is even and
induces an even Poisson bracket $\{\cdot,\cdot\}_\cM$ on $C^\infty(\sT^*\cM)$ which is closed on the
superalgebra $\A(\sT^*\cM)$ of polynomial functions on $\sT^*\cM$. The latter algebra can be identified with
the algebra of symmetric multivector fields on $\cM$, the algebra of symbols of differential operators.

If we reverse the parity of the momenta, $g(p_a)=g(x^a)+1 (mod\ 2)$, then we work on the manifold
$\zP\sT^\ast\cM$ with the canonical symplectic form $\zw_\cM^\zP=\xd p_a\xd x^a$ which is now odd and induces
an odd Poisson bracket $\{\cdot,\cdot\}_\cM^\zP$on $C^\infty(\zP\sT^*\cM)$ which is closed on the superalgebra
$\A(\zP\sT^*\cM)$ of polynomial functions on $\zP\sT^*\cM$. The latter algebra can be identified with the
Grassmann algebra $\cX(\cM)=\bigoplus_{k=0}^\infty\cX^k(\cM)$ of skew-symmetric multivector fields on $\cM$
with an obvious $\N$-grading. The odd Poisson bracket $\{\cdot,\cdot\}_\cM^\zP$ on $\cX(\cM)$ we will call the
{\it Schouten bracket} on $\cM$, while the bracket $\{\cdot,\cdot\}_\cM$ on $\A(\sT^*\cM)$ we will call the
{\it symmetric Schouten bracket} on $\cM$. The following is well known.
\begin{theorem}\label{derived1}
Any even (resp, odd) Poisson bracket on $C^\infty(\cM)$ is the derived bracket $\{\cdot,\cdot\}_\cJ$  induced
from the symplectic Poisson bracket $\{\cdot,\cdot\}_\cM^\zP$ (resp. $\{\cdot,\cdot\}_\cM$) by a certain even
(resp. odd) Poisson tensor ${\cJ}$ viewed as a quadratic Hamiltonian in $\A(\zP\sT^*\cM)$ (resp.,
$\A(\sT^*\cM)$), so that
\be\label{evenpbr} \{ F,G\}_\cJ=\{\{F,{\cJ}\}_{{\cM}}^{\zP},
G\}_{{\cM}}^{\zP}
\ee
in the even case and
\be\label{oddpbr} \{ F,G\}_\cJ=\{\{F,{\cJ}\}_{{\cM}},
G\}_{{\cM}}
\ee
in the odd case.
\end{theorem}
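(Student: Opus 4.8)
The plan is to realize the bracket as a derived bracket. First, the generalized super-Leibniz rule (\ref{s3}), which for a Poisson bracket reduces to the ordinary super-Leibniz rule because the central unit gives $\{\zf,1\}=0$, shows that $\ad_\zf=\{\zf,\cdot\}$ is a super-derivation for every $\zf$; hence the bracket is a biderivation of $C^\infty(\cM)$ and is represented by a contravariant $(2,0)$-tensor field $\cJ$. The super-anticommutativity (\ref{s1}) fixes the symmetry type of $\cJ$: for $k=0$ it is super-skew, so $\cJ$ is an even bivector, an element of $\cX^2(\cM)=\A^2(\zP\sT^*\cM)$; for $k=1$ the parity shift in (\ref{s1}) makes it super-symmetric, so $\cJ$ is an odd quadratic element of $\A^2(\sT^*\cM)$. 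In both cases I regard $\cJ$ as the associated quadratic Hamiltonian, homogeneous of degree $2$ in the fibre momenta, which is exactly why the two cases require the two different cotangent realizations.

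Second, I would verify the formulas (\ref{evenpbr}) and (\ref{oddpbr}) by a local computation in Darboux coordinates $(x^a,p_b)$. Writing $\cJ=\frac12\cJ^{ab}p_bp_a$ and taking $F=f,\ G=g$ in $C^\infty(\cM)$, the inner canonical bracket $\{f,\cJ\}$ lowers the momentum degree by one and produces the Hamiltonian vector field of $f$, while the outer bracket then evaluates this vector field on $g$, returning $\cJ(\xd f,\xd g)$, which is the original bracket $\{f,g\}$. The single contraction performed by each canonical bracket accounts for the momentum degree exactly, and the Koszul signs come out consistently only because the even bracket is taken with the odd form $\zw_\cM^\zP$ on $\zP\sT^*\cM$ and the odd bracket with the even form $\zw_\cM$ on $\sT^*\cM$; this matching of the grading shift in (\ref{s1}) with the parity of the canonical bracket is what forces the choice of cotangent variant in each case.

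Third, and this is the main point, I would show that the super-Jacobi identity (\ref{s2}) is equivalent to $\cJ$ being a genuine Poisson tensor, i.e. $\{\cJ,\cJ\}=0$ in the relevant canonical bracket. The functions on $\cM$ form an abelian subalgebra, since the canonical bracket of two momentum-independent functions vanishes; on such a subalgebra the derived bracket is a standard object whose Jacobiator on $f,g,h$ is governed, via the graded Jacobi identity of the canonical bracket, by the triple bracket built from the cubic Hamiltonian $\{\cJ,\cJ\}$. Hence $\{\cJ,\cJ\}=0$ makes the derived bracket satisfy (\ref{s2}); conversely, since homogeneous functions span all finite jets, validity of (\ref{s2}) for all $f,g,h$ forces the symbol $\{\cJ,\cJ\}$ to vanish identically. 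The delicate part throughout is the sign bookkeeping across the two parities; with those signs in hand the three steps combine to give the claim, this being the supergeometric form of the classical fact that the Schouten square of a bivector measures its Jacobiator.
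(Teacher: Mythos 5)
The paper does not actually prove this statement --- it introduces it with ``The following is well known'' and moves on --- so there is no internal argument to compare with; what you have written is the standard proof of this fact, and it is essentially correct. Your three steps are the right ones: (i) centrality of $1$ reduces (\ref{s3}) to the genuine Leibniz rule, so the bracket is a biderivation and hence a contravariant $2$-tensor, and the symmetry forced by (\ref{s1}) realizes this tensor as an even quadratic Hamiltonian on $\zP\sT^*\cM$ for $k=0$ (graded-skew, an element of $\cX^2(\cM)$) and as an odd quadratic Hamiltonian on $\sT^*\cM$ for $k=1$ (graded-symmetric), which is exactly why the two cases require the two different canonical brackets; (ii) the Darboux computation identifying the derived bracket with $\cJ(\xd F,\xd G)$ is routine; (iii) since $\ad_\cJ=\{\cJ,\cdot\}$ is an odd operator for the relevant canonical bracket in both cases, $\ad_\cJ^2=\tfrac12\ad_{\{\cJ,\cJ\}}$, and on the abelian subalgebra of basic functions the Jacobiator of the derived bracket is the trilinear form cut out by the cubic Hamiltonian $\{\cJ,\cJ\}$, so the Jacobi identity (\ref{s2}) for the given bracket is equivalent to $\{\cJ,\cJ\}=0$, i.e.\ to $\cJ$ being a Poisson tensor. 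One small correction to your converse direction in (iii): the right justification is pointwise --- the Jacobiator evaluated on $(F,G,H)$ depends only on $\xd F,\xd G,\xd H$ at a point, and differentials of the (even and odd) coordinate functions span each cotangent fibre, so vanishing of this trilinear form annihilates every coefficient of the cubic function $\{\cJ,\cJ\}$; the phrase ``homogeneous functions span all finite jets,'' which in the paper concerns the weight grading on graded manifolds, is not the relevant fact here. Apart from that, and from the sign bookkeeping you defer (which is standard), the proposal is complete.
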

Note that if $\cM$ is a vector bundle over $M$ with the Euler vector field $\zD$ and the Poisson tensor $\cJ$
on $\cM$ is linear, i.e. $\cJ$ is homogeneous of degree -1, $\Ll_\zD\cJ=-\cJ$, then the Poisson bracket
$\{\cdot,\cdot\}_\cJ$ is closed on linear function on $\cM$, thus defines a Lie bracket $[\cdot,\cdot]_\cJ$ on
sections of the dual bundle $E=\cM^*$ by
\be\label{Lad}\zi_{[e,e']_\cJ}=\{\zi_e,\zi_{e'}\}_\cJ\,.
\ee
Here, $\zi_e$ is the linear function on $\cM=E^*$ corresponding to $e\in\Sec(E)$. This Lie bracket admits an
{\it anchor}, i.e. a vector bundle morphism $\zr:E\to\sT M$ covering the identity such that
\be\label{Lad1}[e,fe']_\cJ=f[e,e']_\cJ+\zr(e)(f)e'
\ee
for any basic function $f\in C^\infty(M)$. In other words, the linear Poisson tensor $\cJ$ makes the vector
bundle $E$ into a {\it Lie algebroid}.

\subsection{Tangent and phase lifts}

In the standard approach, for any supermanifold $\cM$ with local coordinates $(x^a)$ the tangent bundle
$\sT\cM$ with adapted local coordinates $(x^a,\dot{x}^b)$, and the cotangent bundle $\sT^\ast\cM$ with adapted
local coordinates $(x^a,p_b)$ are viewed as supermanifolds with the parity of $\dot{x}^a$ and $p_a$ being the
same as the parity of $x^a$.

We will regard the graded superalgebra (Grassmann algebra) $\zW(\cM)=\bigoplus_{k=0}^\infty\zW^k(\cM)$ of
differential forms on $\cM$ as the superalgebra of polynomial functions on $\Pi\sT\cM$, so that the degree of
local coordinate $x^a\in\zW(\cM)$ is the same as in $\cM$, and the degree of $\dot x^a=\xd {x^a}\in\zW(\cM)$
is $g(x^a)+1\ (mod\ 2)$. This will be the default superalgebra structure on $\zW(\cM)$ we will use.

Note that both  Grassmann algebras,  $\cX(\cM)$ and $\zW(\cM)$ are canonically subalgebras of the algebra
$C^\infty(\sT^\ast\Pi\sT \cM)\simeq C^\infty(\sT^\ast\Pi\sT^* \cM)$. The canonical symplectic form
$\zw_{\Pi\sT^* \cM}$ induces an even Poisson bracket $\BBP{\cdot}{\cdot}_{\Pi\sT^* \cM}$ on
$C^\infty(\sT^\ast\Pi\sT^* \cM)$ called sometimes the {\it big bracket}.

\medskip
If $(\cM,\zD^1_\cM,\dots,\zD^n_\cM)$ is a $\K^n$-graded supermanifold, then $\sT\cM$ is canonically
$\N\ti\K^n$-graded. The corresponding weight vector fields on $\sT\cM$ are: $\zD_{\sT\cM}$, i.e. the Euler
vector field of the vector bundle structure $\zt_\cM:\sT\cM\ra\cM$, and the so called {\it tangent lifts}
$\dt\zD^s_\cM$ of the weight vector fields $\zD^s_\cM$ on $\cM$. In local coordinates,
$$\zD_{\sT\cM}=\dot{x}^a\pa_{\dot{x}^a},\quad \dt\zD^s_\cM=\sum_aw^s_ax^a\pa_{x^a}+
\sum_aw^s_a\dot{x}^a\pa_{\dot{x}^a}\,.$$

This graded manifold we will denote $\sT(1)\cM$ (or $\sT[1]\cM$ if we reverse also the parity of momenta). It
follows that if $\cM$ is an $n$-manifold of degree $r$, then $\sT(1)\cM$ is canonically an $(n+1)$-manifold of
degree $(1,r)$. Indeed, if the weight vector fields $\zD^s_\cM$ are complete and correspond to pair-wise
commuting actions $h^s$ of the monoid $(\R,\cdot)$ on $\cM$ (homogeneity structures), then the vector fields
$\dt\zD^s_\cM$ correspond to pairwise commuting homogeneity structures $\sT h^s$, $(\sT h^s)_t=\sT h^s_t$, on
$\sT\cM$, and the Euler vector field $\zD_{\sT\cM}$ corresponds to the multiplication $h:\R\ti\sT\cM\to\sT\cM$
by scalars in the vector bundle $\sT\cM$, $h_t(v)=t.v$, that commutes with all the actions $\sT h^s$,
$s=1,\dots,n$. As the tangent lift of an Euler vector field is an Euler vector field, so $\sT[1]\cM$ (as well
as $\sT\cM$) is canonically an $(n+1)$-vector bundle if $\cM$ is an $n$-vector bundle (see \cite{GR}).

Similarly, the cotangent bundle $\sT^\ast\cM$ is canonically $\N\ti\K^n$-graded. The corresponding weight
vector fields on $\sT^*\cM$ are: $\zD_{\dts\cM}$ and the so called {\it cotangent lifts} $\dTs\zD^s_\cM$ of
the weight vector fields $\zD^s$ on $\cM$. The cotangent lift $\dTs\zD^s_\cM$ is, by definition, the
Hamiltonian vector field of the linear function on $\sT^\ast\cM$ represented by the vector field $\zD^s_\cM$.
In local Darboux coordinates $(x,p)$,
$$\zD_{\sT^\ast\cM}=p_a\pa_{p_a},\quad \dTs\zD^s_\cM=\sum_aw^s_ax^a\pa_{x^a}-
\sum_aw^s_ap_a\pa_{p_a}\,.$$ The cotangent lift of a h-complete weight vector field is not h-complete any more
as it includes negative weights. If we start with an $\N^n$-graded manifold of degree $r$, a solution depends
on using $r\zD_{\dts\cM}$ instead of $\zD_{\dts\cM}$ and the {\it $r$-phase lifts} $\sT^\ast(r)\zD^s$ of
weight vector fields
\be\label{plift} \dts(r)\zD^s_\cM=\dTs\zD^s_\cM+r\zD_{\sT^\ast\cM}=\sum_aw^s_ax^a\pa_{x^a}+
\sum_a\left(r-w^s_a\right)p_a\pa_{p_a}
\ee
instead of the cotangent lifts. Hence, we end up with an $\N^{n+1}$-graded manifold which we will denote as
$\sT^\ast(r)\cM$, or $\sT^\ast[r]\cM$ if we reverse the parity of momenta for $r$ being odd. If $\cM$ is an
$n$-manifold of degree $r$, these lifts together with the Euler vector field $\zD_{\sT^\ast\cM}$ make
$\sT^\ast\cM$ into an $(n+1)$-manifold of degree $r$. The homogeneity structures $\left(\sT^*(r)h^s\right)$
associated with $\dts(r)\zD^s_\cM$ are defined by
\be\label{hmt}\left(\sT^*(r)h^s\right)_t=t^r.\left(\sT h^s_{t^{-1}}\right)^\ast
\ee
for $t\ne 0$ and extend canonically to the whole $\R$. Note that the canonical symplectic form
$\zw_\cM^{(r)}=\xd p_a\xd x^a$ on $\sT^*(r)\cM$ (or $\zw_\cM^{[r]}=\xd p_a\xd x^a$ on $\sT^*[r]\cM$) has the
weight $r^{n+1}=(r,\dots,r)\in\N^{n+1}$.

The 1-phase lifts, which we will call simply {\it phase lifts}, have  the property that they produce Euler
vector fields from Euler vector fields, so $\dts\cM$ is canonically an $(n+1)$-vector bundle if $\cM$ is an
$n$-vector bundle (see \cite{GR}). In fact, in this case $\dts\cM$ is a {\it symplectic $(n+1)$-vector
bundle}, since the canonical symplectic form $\zw_\cM$ on $\dts\cM$ is homogeneous of degree 1 with respect to
phase lifts and with respect to the Euler vector field $\zD_{\sT^\ast\cM}$.

\section{Graded principal $\R^\ti$-bundles}\label{secprinc}

Let $\R^\ti=\R\setminus\{ 0\}$ be the multiplicative group on non-zero real numbers, viewed as an ordinary
manifold (purely even supermanifold). To every principal $\R^\ti$-bundle $P$ over $\cM$ with the
$\R^\ti$-action
\be\label{action}h:\R^\ti\ti P\ra P\,, \quad (t,p)\mapsto h_t(p)\,,
\ee

they correspond canonical line bundles over $\cM$: the even line bundle $\Pe$ with the typical fibre
$\R=\R^{1\mid 0}$ and the odd line bundle $\Po$ with the typical fibre $\R^{0\mid 1}$. With $P^e$ and $P^o$ we
will denote the dual line bundles, $P^e=(\Pe)^*$, $P^o=(\Po)^*$.

On the other hand, using the transformation rules for local trivializations, with every line bundle $L$ over
$\cM$ one can associate canonically a principal $\R^\ti$-bundle $P=L^\ti$ over $\cM$. These operations on
bundles are mutual inverses: $\stackrel{e}{L^\ti}=L$ and $\stackrel{o}{L^\ti}=L$, for an even and odd bundle
$L$, respectively. If $L$ is even, one can obtain $L^\ti$ just by removing the zero-section:
$L^\ti=L\setminus\{ 0_\cM\}$. Moreover, the fundamental vector field $\zD_P$ of the $\R^\ti$-action $\R^\ti\ti
P\ni(t,p)\mapsto h_t(p)\in P$ on $P=L^\ti$ is just the Euler vector field $\zD_L$ of the vector bundle $L$,
restricted to $L^\ti$. With some abuse of terminology, we will call it the {\it Euler vector field} of the
$\R^\ti$-principal bundle and the $\R^\ti$-action the {\it homogeneity structure} of the $\R^\ti$-principal
bundle. For the standard coordinate $t$ on $\R$, thus $\R^\ti$, used as coordinate in fibers for a fixed local
trivialization and extended by coordinate in the base manifolds (such coordinates in $L^\ti$ we will call {\it
homogeneous}, we have $\zD_L=t\pa_t$.

\begin{definition} An {\it $n$-graded principal $\R^\ti$-bundle of degree $k$} over $\cM$ is a principal $\R^\ti$-bundle $P$ over $\cM$ with a principal action $h^0$ of the group $\R^\ti$ on $P$, equipped simultaneously with a structure of an $n$-graded manifold $(P,h^1,\dots,h^n)$ of degree $k$, such that the action $h^0$ commutes with the homogeneity structures $h^i$,
\be\label{cr}h^0_t\circ h^i_s=h^i_s\circ h^0_t\,,\quad t,s\in\R^\ti\,,\quad i=1,\dots,n\,.
\ee
 In particular, if $h^1,\dots,h^n$ are homogeneity structures of an $n$-vector bundle, an  $n$-graded principal $\R^\ti$-bundle  $(P,h^0,h^1,\dots,h^n)$ of degree $\ao$ we will call an {\it $n$-linear principal $\R^\ti$-bundle}.

Analogous object called simply a {\it principal $\R^\ti$-bundle of degree $k$} we obtain assuming additionally
that the parity is defined by the total weight.
\end{definition}
\begin{remark}
Of course, $h^0_t\circ h^i_s=h^i_s\circ h^0_t$ for $t,s\in\R^\ti$ implies also $h^0_t\circ h^i_0=h^i_0\circ
h^0_t$ and, in turn, that the diffeomorphisms $h^0_t$ preserve the weight vector fields $\zD^1,\dots,\zD^n$
associated with the homogeneity structures $h^1,\dots,h^n$, i.e. $(h^0_t)_*(\zD^i)=\zD^i$.  Hence, the
fundamental vector field $\zD^0$ of the principal $\R^\ti$-action commutes with $\zD^i$, i.e.
$[\zD^0,\zD^i]=0$ for $i=1,\dots,n$ and it is easy to see that they are jointly diagonalizable. On the other
hand, the group $\R^\ti$ is not connected and the fundamental vector field $\zD^0$ determines only the action
of its connected component $\R^\ti_+$. To know the whole action we have to know additionally the symmetry
$h_{-1}$, so (\ref{cr}) means that $[\zD^0,\zD^i]=0$ and $(h_{-1})_*(\zD^i)=\zD^i$. In any case, however, an
$n$-graded principal $\R^\ti$-bundle is canonically an $\Z\ti\N^n$-graded manifold with the homogeneous
algebra
$$\A(P)=\bigoplus_{j\in\Z\ti\N^{n}}\A^j(P)\,,$$
where $j_0$ of $(j_0,\dots,j_n)\in\Z\ti\N^{n}$ refers to the degree of homogeneity with respect to the
$\R^\ti$-action.
\end{remark}

\medskip\noindent A local form of a graded principal $\R^\ti$-bundle describes the following.
\begin{theorem}\label{t2} Any  $n$-graded principal $\R^\ti$-bundle $(P,h^0,h^1,\dots,h^n)$ over $\cM$ induces a canonical $n$-graded manifold structure on $\cM$ and admits an atlas of $\R^\ti$-invariant charts on $P$ with homogeneous local coordinates $(t,x^1,\dots,x^m)$, $t\in\R^\ti$, such that $h^0_s(t,x^a)=(st,x^a)$ and the h-complete weight vector fields read as
\be\label{lvsp} \zD^i=\sum_{a=1}^mw^i_ax^a\pa_{x^a}\,,
\ee
for some $w^s_a\in\N$. In other words, $P$ is locally the product $\R^\ti\ti U$, where $U$ is an open
$n$-graded submanifold of $\cM$, with the obvious $n$-graded principal $\R^\ti$-bundle structure.
\end{theorem}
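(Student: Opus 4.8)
The plan is to reduce the statement to two tasks: to descend the homogeneity structures $h^1,\dots,h^n$ from $P$ to the base $\cM=P/\R^\ti$, and then to exhibit a local trivialization in which the weight vector fields have no $\pa_t$-component. Throughout I use that, by the Remark following the definition, the fundamental vector field $\zD^0$ of $h^0$ and the weight vector fields $\zD^1,\dots,\zD^n$ pairwise commute and are jointly diagonalizable, so that $P$ is canonically $\Z\ti\N^n$-graded.

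For the descent, I would note that since $h^i_t$ commutes with the principal action $h^0_s$ it maps $\R^\ti$-orbits to $\R^\ti$-orbits, hence passes to a smooth map $\bar h^i_t:\cM\ra\cM$; the family $\bar h^i$ is again a smooth action of the monoid $(\R,\cdot)$, and the $\bar h^i$ pairwise commute because the $h^i$ do. By the theorem on commuting monoid actions stated above, $(\cM,\bar h^1,\dots,\bar h^n)$ is then an $n$-graded manifold whose weight vector fields $\bar\zD^i$ are the projections $\pi_*\zD^i$ under $\pi:P\ra\cM$ (well defined because $\zD^i$ is $h^0$-invariant); this is the canonical $n$-graded structure on $\cM$, of degree $\preceq k$. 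I would then pick homogeneous local coordinates $(x^a)$ on an open $U\subseteq\cM$ with $\bar\zD^i=\sum_a w^i_ax^a\pa_{x^a}$, $w^i_a\in\N$, and pull them back to $\R^\ti$-invariant functions on $P$, still denoted $x^a$.

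The key step is to construct a fibre coordinate $t$ annihilated by every $\zD^i$. In any trivialization one has $\zD^0=t\pa_t$ and, from $[\zD^0,\zD^i]=0$, an expansion $\zD^i=a^i(x)\,\zD^0+W^i$ with $W^i$ the $t$-independent lift of $\bar\zD^i$; thus $\zD^i(x^a)=w^i_ax^a$ already, and only the term $\zD^i(t)=a^i(x)\,t$ must be removed. Rather than solving the overdetermined system $W^i(\phi)=-a^i$ directly (whose integrability is guaranteed by $W^i(a^j)=W^j(a^i)$, itself equivalent to $[\zD^i,\zD^j]=0$), I would produce the coordinate by retraction onto the core, in the spirit of \cite{GR,GR1} and the fibration diagram of Remark \ref{r1.2}. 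Set $r=h^1_0\circ\cdots\circ h^n_0:P\ra P_{1,\dots,n}$, the common zero locus of $\zD^1,\dots,\zD^n$; since $h^i_0\circ h^i_s=h^i_0$ and the $h^j_0$ commute, one gets $r\circ h^i_s=r$, so any function of the form $f\circ r$ is $h^i$-invariant, hence annihilated by $\zD^i$. Because $r$ is $\R^\ti$-equivariant, $P_{1,\dots,n}$ is a principal $\R^\ti$-bundle over $\cM_{1,\dots,n}$; choosing a fibre coordinate $t_\bullet$ there and setting $t:=t_\bullet\circ r$ yields $\zD^i(t)=0$ for all $i$, while equivariance of $r$ gives $h^0_s(t,x)=(st,x)$ and $\zD^0=t\pa_t$. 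As $t$ is nowhere zero with $\zD^0(t)=t\ne0$, it is transverse to the base, so $(t,x^a)$ is the required $\R^\ti$-invariant homogeneous chart in which $\zD^i=\sum_a w^i_ax^a\pa_{x^a}$.

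The main obstacle is exactly this last step: removing the $a^i\zD^0$ term while staying inside the graded (and super) category. The retraction construction sidesteps the analytic difficulty of integrating $W^i(\phi)=-a^i$, which would be delicate near the locus where the $W^i$ degenerate (the weight-zero directions), by building the invariant coordinate straight from the homogeneity structures. I would finally remark that the argument transfers verbatim to supermanifolds, since it uses only the even weight vector fields and their homogeneity structures; the corresponding statement for principal $\R^\ti$-bundles of degree $k$ follows because there the grading respects the parity by assumption.
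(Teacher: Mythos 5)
Your proof is correct, but the crucial step is carried out by a genuinely different mechanism than in the paper. The two arguments share the opening moves: descend $h^1,\dots,h^n$ through the submersion $P\to\cM=P/\R^\ti$, apply the theorem on commuting monoid actions to obtain homogeneous coordinates $(x^a)$ on $\cM$, and pull these back to $\R^\ti$-invariant functions on $P$, so that only the fibre coordinate remains to be fixed. At that point the paper computes in an \emph{arbitrary} invariant trivialization: $h^0$-invariance forces $\zD^i=g^i(x)\,t\pa_t+\sum_a w^i_ax^a\pa_{x^a}$, integration gives $h^i_s(t,x)=(s^{g^i(x)}t,\bar h^i_s(x))$, and smoothness of the monoid action at $s=0$ (h-completeness) kills $g^i$; in particular the paper shows that \emph{every} invariant trivialization over a homogeneous chart is automatically adapted. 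You instead \emph{construct} one adapted trivialization: pulling an equivariant fibre coordinate back along the core retraction $r=h^1_0\circ\cdots\circ h^n_0$, whose identities $r\circ h^i_s=r$ and $r\circ h^0_s=h^0_s\circ r$ deliver $\zD^i(t)=0$ and $t\circ h^0_s=s\,t$ simultaneously; here h-completeness enters through the existence and equivariance of the projections $h^i_0$ rather than through a limit computation. Your route avoids integrating the flow and writing the action in the form $s^{g(x)}t$, and is arguably more geometric; its price is a weaker conclusion (existence of adapted charts, not adaptedness of all of them) and one fact you assert without argument, namely that $P_{1,\dots,n}\to\cM_{1,\dots,n}$ admits an equivariant fibre coordinate $t_\bullet$. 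That point is easy to settle without discussing local triviality of the core bundle as a separate issue: take a local section $\zs$ of $P\to\cM$ over an open set $V$ containing $\pi(r(p))$, let $\zt:\pi^{-1}(V)\to\R^\ti$ be the associated division coordinate (so $\zt\circ h^0_s=s\,\zt$), and put $t=\zt\circ r$; this is smooth, defined on the $\R^\ti$- and $h^i$-invariant open set $\pi^{-1}(\bar r^{-1}(V))$, where $\bar r=\bar h^1_0\circ\cdots\circ\bar h^n_0$, and has exactly the properties your argument requires.
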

\begin{proof}
It is straightforward to see that the commutation relations (\ref{cr}) imply that $h^1,\dots,h^n$ induce on
$P/\R^\ti=\cM$ a reduced $n$-manifold structure $\bar{h}1,\dots,\bar{h}^n$. Consider a local trivialization
$\R^\ti\ti U$ of $P$, where $U$ is an $\bar{h}1,\dots,\bar{h}^n$-invariant open subset of $\cM$ and
coordinates $(t,x^a)$, $a=1,\dots,m$. We can choose coordinates $(x^a)$ being homogeneous, so that
\be\label{lvsp=} {\zD}^i=f^i(t,x)\pa_t+\sum_{a=1}^mw^i_ax^a\pa_{x^a}\,.
\ee
As $(h^0_s)_*(\zD^i)=\zD^i$, we get that $sf^i(t/s,x)=f(t,x)$ for all $s,t\in\R^\ti$, so $f^i(t,x)=g^i(x)t$.
Hence, $h^i_s(t,x)=(s^{g^i(x)}t,\bar{h}^i(x))$. Since $h^i$ is a smooth map $\R\ti\R^\ti\ti U
\ni(s,t,x)\mapsto h^i_s(t,x)\in\R^\ti \ti U$, considering the  limit $\lim_{s\to 0}h^i_s(t,x)$ we conclude
that $g(x)=0$.
\end{proof}

\begin{example} Like for vector bundles, the tangent $\sT P$ and the cotangent bundle $\sT^* P$ of an $\R^\ti$-principal bundle $P$ are canonically $\R^\ti$-principal bundles, with the tangent $\sT h$ and the phase $\sT^* h$ lift of the $\R^\ti$-action,
respectively:
\be\label{lifts}(\sT h)_t=\sT(h_t)\,,\quad (\sT^* h)_t=t.(\sT h_{t^{-1}})^*\,,\quad t\ne 0\,.
\ee
In the coordinates $(t,x^a)$ on $P$ as above and the adopted coordinates $(t,x^a,p_0,p_a)$ in $\sT^* P$, we
have $h_s(t,x^a)=(st,x^a)$, so $(\sT^*h)_s(t,x^a,p_0,p_a)=(st,x^a,p_0,sp_a)$ that commutes with the action
$h^1_u(t,x^a,p_0,p_a)=(t,x^a,up_0,up_a)$ of $\R$ by homotheties. We will study closer the case of the
cotangent bundle in Section \ref{sfjb}.

Formally the same formulae hold true when we are lifting a principal $\R^\ti$-bundle structure to the
structures $\zP\sT h$ and $\zP\sT^* h$ on $\zP\sT P$ and $\zP\sT^* P$, respectively. As $\sT\cM$ and
$\sT^*\cM$ are simultaneously vector bundles and the both structures are compatible, $\sT P$ and $\sT^* P$ (as
well as $\zP\sT P$ and $\zP\sT^* P$) are canonically  linear principal $R^\ti$-bundles.
\end{example}
\begin{remark}
We will not especially indicate  the lifting of vector bundle structures and the $\R^\ti$-action to the cotangent bundle and write simply $\sT^* P$ instead writing $\sT^*(1)P$ or similar for an $n$-linear principal $\R^\ti$-bundle $P$, just assuming implicitly that on $\sT^*P$ the phase lifts are taken by default.
\end{remark}

\noindent Suppose now that $(P,h^0,h^1)$ is a 1-graded principal $\R^\ti$-bundle with a principal bundle and a
1-graded manifold fibrations $\zt_0:P\ra P_0$ and $\zt_1=h^1_0:P\ra P_1$, respectively, and with the Euler
vector fields $\zD^0,\zD^1$. According to Theorem \ref{t2}, $P_0=P/\R^\ti$ is canonically a 1-graded manifold
with the  reduced homogeneity structure $\bar{h}^1$ and the bundle structure $ul{\zt}_1:P_0\to\cM$.

Similarly, the $\R^\ti$-action $h^0$ is projectable onto the base $P_1$ of the 1-graded bundle structure and
induces there a principal $\R^\ti$-bundle structure over a base $P_1/\R^\ti$ which can be identified with
$\cM$. Recall that $P_1$ is canonically embedded in $P$ (cf. Remark \ref{r1.2}). Thus the picture is analogous
to the case of a double vector bundle (see \cite{GR})
\be\label{FD0}\xymatrix{
P\ar[rr]^{\zt_0} \ar[d]^{\zt_1} && P_0\ar[d]^{\bar{\zt_1}} \\
P_1\ar[rr]^{\bar{\zt_0}} && \cM }
\ee
except for the fact that one structure is a principal $\R^\ti$-bundle instead of a vector bundle, thus $P_0$
is not canonically embedded in $P$. In this picture however, we have no core, so that the map
\be\label{nocore} \zt=(\zt_1,\zt_0):P\ra P_1\ti_\cM P_0\ee
is a fibration with one-point fibers, thus a diffeomorphism.

Indeed, using the local coordinates $(t,x^a)$ in $P$ described in Theorem \ref{t2} we can write the map $\tau$
as the identity, since coordinates on $P_0$ are $(x^a)$, coordinates on $P_1$ are $(t,x^A)$, and coordinates
on $\cM$ are $(x^A)$, where with $(x^A)$ we denoted coordinates on $P_0$ of degree 0.

Thus we get the following.
\begin{theorem}\label{ii}
Any 1-graded {principal $\R^\ti$-bundle}  $(P,h^0,h^1)$ induces the commutative diagram \ref{FD0} of morphisms
of bundle structures and is therefore equivalent to the product $P_1\ti_{\cM} P_0$ of the principal
$\R^\ti$-bundle $P_1=h^1_0(P)$ over $\cM=P_1/\R^\ti$ and the 1-graded bundle $P_0=P/\R^\ti$ fibred over $\cM$,
with the obvious 1-graded principal $\R^\ti$-bundle structure on the product.
\end{theorem}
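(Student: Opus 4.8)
The plan is to assemble the observations made in the paragraphs preceding the statement into a single isomorphism, the crucial input being the adapted-coordinate normal form supplied by Theorem~\ref{t2}. First I would record the two quotient/restriction constructions. Applying Theorem~\ref{t2} to the commuting pair $(h^0,h^1)$, the principal action $h^0$ descends to the quotient $P_0=P/\R^\ti$, and since $h^0$ commutes with $h^1$ the homogeneity structure $h^1$ passes to $P_0$, making $\bar{\zt_1}\colon P_0\to\cM$ a $1$-graded bundle; dually, $h^1_0$ has image $P_1$, canonically embedded in $P$ by Remark~\ref{r1.2}, and the restriction of $h^0$ to $P_1$ turns $\bar{\zt_0}\colon P_1\to\cM$ into a principal $\R^\ti$-bundle with $\cM=P_1/\R^\ti$. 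The commutation relation (\ref{cr}) is exactly what guarantees that the four projections $\zt_0,\zt_1,\bar{\zt_0},\bar{\zt_1}$ are morphisms of the respective structures and that the square (\ref{FD0}) commutes, since the composites $\bar{\zt_1}\circ\zt_0$ and $\bar{\zt_0}\circ\zt_1$ both realise the passage to the common base $\cM$.

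Next I would form the comparison map $\zt=(\zt_1,\zt_0)\colon P\ra P_1\ti_\cM P_0$. Commutativity of (\ref{FD0}) is precisely what makes the image land in the fibre product over $\cM$, so $\zt$ is well defined and is by construction a morphism of $1$-graded principal $\R^\ti$-bundles for the obvious structure on the product (the principal action carried by the $P_1$-factor, the grading by the $P_0$-factor). It then remains to prove that $\zt$ is an isomorphism, and here the whole point is the \emph{absence of a core}: unlike a genuine double vector bundle, one of the two structures is a free principal $\R^\ti$-bundle rather than a vector bundle, so no extra diagonal directions survive in the fibre of $\zt$.

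The cleanest way to close this is the local model of Theorem~\ref{t2}. Over an $\R^\ti$-invariant, $\bar h^1$-invariant chart I may take homogeneous coordinates $(t,x^a)$ with $h^0_s(t,x^a)=(st,x^a)$ and weight vector field $\zD^1=\sum_a w_a x^a\pa_{x^a}$, so that $t$ has $h^1$-weight zero. Splitting $(x^a)=(x^A,x^\za)$ according to $w_a=0$ and $w_a\neq 0$, the chart identifies $\cM$ with the $(x^A)$, the fibre $P_0=P/\R^\ti$ with $(x^A,x^\za)$, and $P_1=\{x^\za=0\}$ with $(t,x^A)$; hence $P_1\ti_\cM P_0$ carries coordinates $(t,x^A,x^\za)$ and $\zt$ is literally the identity, so it is a graded diffeomorphism intertwining both the $\R^\ti$-actions and the weight vector fields. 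I expect the only genuine obstacle to be the passage from this local statement to a global one: one must check that two adapted trivializations are related by transition maps preserving the product decomposition, i.e. that the coordinate $t$ transforms by a nowhere-vanishing $h^1$-weight-zero factor while the $(x^a)$ transform among themselves. This is forced by the normal form of Theorem~\ref{t2}, where the $\R^\ti$-action touches only $t$ and the weights touch only the $x^a$, so the local identifications glue to a global isomorphism $P\cong P_1\ti_\cM P_0$, completing the proof.
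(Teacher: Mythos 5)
Your proof is correct and takes essentially the same route as the paper: establish the diagram (\ref{FD0}) from Theorem \ref{t2} and the commutation of $h^0$ with $h^1$, form the canonical map $\zt=(\zt_1,\zt_0):P\to P_1\ti_\cM P_0$, observe there is no core, and check in the adapted coordinates $(t,x^A,x^\za)$ that $\zt$ is the identity. Your closing worry about gluing is superfluous: since $\zt$ is defined globally and coordinate-freely, verifying that it restricts to a diffeomorphism (indeed the identity) on each $\R^\ti$-invariant adapted chart already gives a global diffeomorphism, with no compatibility of transition maps left to check.
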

\begin{remark}\label{iia}
A particular example of an 1-graded {principal $\R^\ti$-bundle} from the above theorem is a linear {principal
$\R^\ti$-bundle} for which $h^1$ represents homotheties in the vector bundle $\zt_1:P\to P_1$. Here, $P_1$
represents just the 0-section in the vector bundle.
\end{remark}

\section{Contact forms and contact structures}
\subsection{Contact forms}
Note that sections of a vector bundle over $\cM$ form canonically a $C^\infty(\cM)$-bimodule: we usually think
on left-modules, but the right-module structures come automatically according to the standard rules of
super-commutation. Since the left- and the right-module structures are generally different, we encounter some
problems when dealing with contractions. The standard (left) contraction $i_X\zb$ of a vector field $X$ with a
$k$-form $\zb$ is bilinear with respect to the left-module structure on vector fields and the right-module
structure on forms. In particular, for a vector field $X$ and a one-form $\za$ the contractions $i_X\za$ and
$i_\za X$ give, in general, different results. Of course, we can consider as well the right contraction
$i'_X\zb$ which is bilinear with respect to the left-module structure on forms and the right-module structure
on vector fields. The left- and the right-module structures will be indicated by the superscripts `$l$' and
`$r$'. For example, $\zW^1_r(\cM)$ is canonically the dual module of $\cX^1_{\,l}(\cM)$.

Similarly, when speaking about subbundles and quotient bundles as well as direct sums or products we will
usually refer to certain locally free sub-bimodules or quotient bimodules. If a, say, left-submodule is not
generated by even or odd sections, it is, in general, not a bimodule of sections of a super-vector bundle,
even if it is locally free.

Recall that the parity diffeomorphism of $\cM$ which is the identity on even functions and minus the identity
on odd functions can be extended to arbitrary tensors: we will write $\ol{K}$ for the parity diffeomorphisms
applied to a tensor field $K$, i.e. $\ol{K}=K^0-K^1$, where $K^0,K^1$ are the even and the odd part of $K$,
respectively.

Let us observe that if $\za$ is an arbitrary 1-form on $\cM$, not assumed to be even, so it cannot be viewed
as a smooth map $\za:\cM\ra\sT^\*\cM$ between supermanifolds, the pull-back $\za^*(\zb)$ of a differential
form on $\sT^\*\cM$ which is basic or homogeneous of degree 1 is nevertheless well defined. In particular, the
pull back $\za^*(\zs_\cM)$ of the canonical Liouville one-form $\zs_\cM$ on $\sT^\*\cM$ makes sense and
$\za^*(\zs_\cM)=\za$. Similarly, the pull-back of the canonical symplectic form $\zw_\cM=\xd\zs_\cM$ on
$\sT^\ast\cM$ reads $\za^*(\zw_\cM)=\xd\za$. Indeed, in local coordinates $(x^a)$ in $\cM$ and the adapted
Darboux coordinates $(x^a,p_b)$ in $\sT^\ast\cM$ we have $\za=\sum_a\za_a(x)\xd x^a$, so
$$\za^*(\zs_\cM)=\za^*\left(\sum_ap_a\xd x^a\right)=\sum_a\za_a(x)\xd x^a=\za,$$
as $\za^*$ is, by convention, the identity map on basic functions and intertwines the de Rham differential. In
the sequel we will extensively use this observation.

A 1-form $\za\in\zW^1(\cM)$ we will call {\em nowhere-vanishing} if there is a vector field $Y\in\cX^1(\cM)$
such that $i_Y\za=1$. It is easy to see that, equivalently, the one-form $\za$ is nowhere-vanishing if and
only if there is $Y'\in\cX^1(\cM)$ such that $i'_{Y'}\za=i_\za Y'=1$. This implies that $\za$ and $Y$ generate
free 1-dimensional submodules $\la\za\ran$ and $\la Y\ran$ in $\zW^1_l(\cM)$ and $\cX^1_{\,l}(\cM)$,
respectively, and that we have the splittings
$$\cX^1_{\,l}(\cM)=\la Y\ran\oplus \Ker(\za),\quad \zW^1_l(\cM)=\la \za\ran\oplus\Ker(Y')\,,$$
where $\Ker(\za)=\{ X\in\cX^1_{\,l}(\cM):i_X\za=0\}$ and $\Ker(Y')=\{ \zb\in\zW^1_{l}(\cM):i_\zb Y'=0\}$ are
the left kernels of $\za$ and $Y'$, respectively. Of course, the above splittings depend on the choice of $Y$
and $Y'$.

A two form $\zw$ is called {\it non-degenerate} if the contraction $\cX^1_{\,l}\ni X\mapsto
i_x\zw\in\zW^1_l(\cM)$ defines a module isomorphism, and {\it symplectic} if $\zw$ is non-degenerate and
closed. Since on a supermanifold a 1-form need not to be a nilpotent element in the Grassmann algebra of
differential forms and, in general, there is no module of `top forms', it is clear that a contact form $\za$
has to be defined differently that via non-vanishing of $\za(\xd\za)^n$ like in the standard differential
geometry.

\begin{theorem}\label{t10} Let $\za$ be a nowhere-vanishing 1-form
on a supermanifold $\cM$. The following are equivalent.
\begin{description}
\item{(a)} $\la\za\ran$ and $\im(\xd\za)=\{\zb\in\zW^1(\cM):\exists X\in\cX^1(\cM)\ [\zb=i_X\xd\za]\}$ are
complementary submodules of $\zW^1_{l}(\cM)$,
\be\label{decomposition}\zW^1_{l}(\cM)=\la\za\ran\oplus\im(\xd\za)\,,\ee and
the contraction $X\mapsto i_X\xd\za$ constitutes a $C^\infty(\cM)$-module isomorphism between $\Ker(\ol{\za})$
and $\im(\xd\za)$; \item{(b)} $\zW^1_{l}(\cM)=\la\za\ran\oplus\im(\xd\za)$ and
$$\#_\za:\cX^1_{\,l}(\cM)\ra\zW^1_l(\cM),\quad \#_\za(X)=i_X\ol{\za}\cdot\za+i_X\xd\za\,,$$
is a $C^\infty(\cM)$-module isomorphism; \item{(c)} the two-form
\be\label{sf}\zw=\xd(t\cdot\za)=\xd t\cdot\za+t\cdot\xd\za\ee is symplectic on the trivial
$\R^\ti$-principal bundle $\R^\ti\ti\cM$.
\end{description}
\end{theorem}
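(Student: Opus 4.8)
The plan is to prove the equivalences by establishing a cycle of implications, exploiting the fact that $\za$ is nowhere-vanishing so that we have a vector field $Y$ with $i_Y\za=1$. This gives the splittings $\cX^1_{\,l}(\cM)=\la Y\ran\oplus\Ker(\za)$ and $\zW^1_l(\cM)=\la\za\ran\oplus\Ker(Y')$ recorded in the text, and these will be the backbone of the argument. The key technical point throughout is that $\#_\za(X)=i_X\ol{\za}\cdot\za+i_X\xd\za$ decomposes the image of $X$ into its $\la\za\ran$-component (measured by $i_X\ol\za$) and its $\im(\xd\za)$-component (namely $i_X\xd\za$), so that $\#_\za$ is an isomorphism precisely when the two summands behave well.

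First I would show (a) $\Leftrightarrow$ (b). Both assume the decomposition \eqref{decomposition}, so the real content is comparing the two isomorphism statements. Given (a), I would verify that $\#_\za$ is injective: if $\#_\za(X)=0$, then projecting onto the two summands of \eqref{decomposition} gives $i_X\ol\za\cdot\za=0$ and $i_X\xd\za=0$; the first forces $i_X\ol\za=0$, i.e. $X\in\Ker(\ol\za)$, and then the injectivity half of the isomorphism $\Ker(\ol\za)\xrightarrow{\sim}\im(\xd\za)$ from (a) yields $X=0$. For surjectivity, given $\zb=c\cdot\za+\zg$ with $\zg\in\im(\xd\za)$, one uses the surjectivity of $\Ker(\ol\za)\to\im(\xd\za)$ to realize $\zg=i_{X_0}\xd\za$ with $X_0\in\Ker(\ol\za)$, and then corrects by a multiple of $Y$ (since $i_Y\ol\za=\pm 1$ is invertible) to absorb the $c\cdot\za$ term; this produces $X$ with $\#_\za(X)=\zb$. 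The converse (b) $\Rightarrow$ (a) is the observation that, once $\#_\za$ is an isomorphism and the decomposition holds, restricting $\#_\za$ to $\Ker(\ol\za)$ lands in $\im(\xd\za)$ and is forced to be an isomorphism onto it by a dimension/complementarity count against the action of $Y$.

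Next I would prove (a) $\Leftrightarrow$ (c). Here the natural computation is to pull back and examine $\zw=\xd(t\cdot\za)=\xd t\cdot\za+t\cdot\xd\za$ on $\R^\ti\ti\cM$, whose non-degeneracy is tested by the contraction map $Z\mapsto i_Z\zw$ on $\cX^1_{\,l}(\R^\ti\ti\cM)$. Writing a general vector field as $Z=f\,\pa_t+X$ with $X$ tangent to $\cM$ (and $t$-dependent coefficients), I would compute $i_Z\zw=f\cdot\za-(i_X\ol\za)\,\xd t+t\cdot i_X\xd\za$, using $i_{\pa_t}\xd t=1$ and the sign bookkeeping coming from $\ol{\za}$. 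The point is that non-degeneracy of $\zw$ over $\R^\ti\ti\cM$ is equivalent, fibrewise in $t$, to the simultaneous conditions that $f$ is recovered (automatic, since it is the $\xd t$-free $\la\za\ran$-coefficient) and that the map $X\mapsto (i_X\ol\za,\,i_X\xd\za)$ is an isomorphism onto $\la\za\ran\oplus\im(\xd\za)$ — which is exactly condition (a) combined with the decomposition \eqref{decomposition}. Closedness of $\zw$ is immediate since $\zw=\xd(t\za)$ is exact, so symplecticity reduces to non-degeneracy, and $\R^\ti$ acts by $h_s(t,x)=(st,x)$ making $\zw$ $1$-homogeneous, consistent with the $\R^\ti$-principal bundle structure.

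The main obstacle, and the part deserving the most care, is the sign and parity bookkeeping for the \emph{odd} case, where $\za$ need not be even. This is precisely why the parity diffeomorphism $\ol{\,\cdot\,}$ and the bar appear in $\Ker(\ol\za)$ and in the definition of $\#_\za$: the identity $\ol{i_X\za}=i_{\ol X}\ol\za$ from the preamble is what makes the contractions behave consistently under the left/right module distinction, and I would need to track these factors of $(-1)^{g}$ carefully when verifying that $i_X\ol\za\cdot\za$ and $i_X\xd\za$ genuinely land in the claimed complementary summands and that injectivity of $\#_\za$ forces membership in $\Ker(\ol\za)$ rather than $\Ker(\za)$. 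Everything else is a routine module-theoretic splitting argument once these signs are pinned down.
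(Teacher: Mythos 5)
Your proposal is correct and follows essentially the same route as the paper: the same splittings coming from a vector field $Y$ with $i_Y\za=1$, the same key contraction formula $i_{\wt X}\zw=f\cdot\za-(i_X\ol\za)\,\xd t+t\cdot i_X\xd\za$ on $\R^\ti\ti\cM$, and the same use of the parity bar to place things in $\Ker(\ol\za)$. The only difference is organizational (you prove the two equivalences (a)$\Leftrightarrow$(b) and (a)$\Leftrightarrow$(c) separately, while the paper runs the cycle (a)$\Rightarrow$(b)$\Rightarrow$(c)$\Rightarrow$(a)), and your surjectivity correction ``by a multiple of $Y$'' needs the routine extra step of re-absorbing the resulting $i_Y\xd\za$ term via the isomorphism $\Ker(\ol\za)\to\im(\xd\za)$ --- which is exactly what the paper's choice of the left Reeb field $\ol Y\in\Ker(\xd\za)$ avoids.
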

\begin{pf}
(a)$\Rightarrow$(b) Since $\za$ is nowhere-vanishing, we can find a vector field $Y$ with $i_Y\za=1$. Then,
$i_{\ol{Y}}\ol{\za}=1$, so $\ol{\za}$ is nowhere-vanishing as well. According to the splitting
$\cX^1_{\,l}(\cM)=\la\ol{Y}\ran\oplus\Ker(\ol{\za})$ and the fact that the contraction $X\mapsto i_X\xd\za$
constitutes a $C^\infty(\cM)$-module isomorphism between $\Ker(\ol{\za})$ and $\im(\xd\za)$, the vector field
$\ol{Y}$ can be uniquely chosen from $\Ker(\xd\za)$. We will call it the {\it left Reeb vector field} of
$\za$. The map $\#_\za$ is clearly a $C^\infty(\cM)$-module isomorphism. This is because $\#_\za(X)$ acts on
$\Ker(\ol{\za})$ as an isomorphism on $\im(\xd\za)$, and on $\la\ol{Y}\ran$ as an isomorphism on $\la\za\ran$.

(b)$\Rightarrow$(c) The two-form $$\zw=\xd(t\cdot\za)=\xd t\cdot\za+t\cdot\xd\za,\quad t\ne 0$$ is clearly
closed, so we will show that it is non-degenerate. Any vector field on $\R^\ti\ti\cM$ is of the form
$\wt{X}=f\pa_t+gX$, where $X$ is a vector field on $\cM$ and $f,g$ are smooth function on $\R^\ti\ti\cM$. We
have
\be\label{syf}i_{\wt{X}}\zw=f\cdot\za-g\cdot i_X(\ol{\za})\xd t+t\cdot g\cdot i_X\xd\za\,,\ee so, in view
of the decomposition (\ref{decomposition}), $i_{\wt{x}}\zw=0$ implies $f=0$, $g\cdot i_X(\za)=0$ and $g\cdot
i_X\xd\za=0$, thus $\wt{X}=0$. It remains to show that the contraction with $\zw$ is `onto'. Take
$\wt{\zb}\in\zW^1(\R^\ti\ti\cM)$, $\wt{\zb}=F\xd t+G\zb$, where $\zb$ is a one-form on $\cM$ and $F,G$ are
smooth function on $\R^\ti\ti\cM$. Take $X,\ol{Y}\in\cX^1(\cM)$ such that $\#_\za(X)=\zb$ and
$\#_\za(\ol{Y})=\za$. Then, as easily seen, we get $\wt{\zb}$ by contracting $\zw$ with $(t^{-1}G)\cdot
X+(t^{-1}G-F)\cdot\ol{Y}+(G\cdot i_X\ol{\za})\cdot\pa_t$.

\smallskip\noindent
(c)$\Rightarrow$(a) According to (\ref{syf}), the contraction $i_{\wt{X}}\zw$ is a one-form on $\cM$ (a basic
one-form) if and only if $X\in\Ker(\ol{\za})$ and $f,t^{-1}g$ are smooth function on $\cM$. We can therefore
write such $\wt{X}$ uniquely as $\wt{X}=f(x)\pa_t+t^{-1}X$, where $X\in\Ker(\ol{\za})$. Since the form $\zw$
is non-degenerate, the map $f(x)\pa_t+t^{-1}X\mapsto f(x)\za+i_X\xd\za$ is an isomorphism of the module
$C^\infty(\cM)\oplus \Ker(\ol{\za})$ onto $\zW^1_l(\cM)$. In particular, we have the splitting
(\ref{decomposition}) and a module isomorphism $\Ker(\ol{\za})\ni X\mapsto i_X\za\in\im(\xd\za)$.
\end{pf}

\medskip\noindent
\begin{definition} A nowhere-vanishing one-form $\za$ on a manifold $\cM$ is
called a {\em contact form}, if it satisfies one of the equivalent conditions of Theorem \ref{t10}.
\end{definition}
\begin{example}\label{e1} On $\R^{1\mid 1}$ with the even coordinate $x$ and the odd coordinate
$\zvy$ the one-form $\za=\xd x+\zvy\xd\zvy$ is an even contact form.
\end{example}
\begin{proposition} Given an invertible smooth even function $\zc$ on a manifold $\cM$,
a one-form $\za$ on $\cM$ is a contact form if and only if \ $\zc\cdot\za$ is contact.
\end{proposition}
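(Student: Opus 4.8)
The plan is to exploit characterization (c) of Theorem \ref{t10}, which recasts the contact condition on $\za$ as the symplecticity of $\zw_\za=\xd(t\cdot\za)$ on the trivial bundle $\R^\ti\ti\cM$. The key observation is that multiplication by an invertible even $\zc$ can be absorbed into a fibrewise rescaling of the $\R^\ti$-coordinate $t$, i.e. into a diffeomorphism of $\R^\ti\ti\cM$; since symplecticity is a diffeomorphism-invariant notion, it will follow at once that $\zw_\za$ is symplectic if and only if $\zw_{\zc\za}$ is.

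First I would check that $\zc\za$ is nowhere-vanishing precisely when $\za$ is. If $Y\in\cX^1(\cM)$ satisfies $i_Y\za=1$, then, because $\zc$ is even and invertible, $i_{\zc^{-1}Y}(\zc\cdot\za)=\zc^{-1}\,i_Y(\zc\cdot\za)=\zc^{-1}\zc\,i_Y\za=1$, no sign factor appearing since $\zc$ is even; applying the same argument to $\zc^{-1}$ gives the converse. Thus the nowhere-vanishing hypothesis in the definition of a contact form is stable under multiplication by $\zc^{\pm 1}$, and $\zc\za$ keeps the parity of $\za$.

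Next I would introduce the fibre-preserving map
\be\label{phimap}\Phi\colon\R^\ti\ti\cM\ra\R^\ti\ti\cM\,,\qquad \Phi(t,x)=(\zc(x)\,t,\,x)\,,\ee
which, because $\zc$ is even and nowhere zero, is a diffeomorphism of the supermanifold $\R^\ti\ti\cM$ with inverse $(s,x)\mapsto(\zc(x)^{-1}s,x)$. As $\Phi$ covers the identity on $\cM$, it fixes every form pulled back from $\cM$; in particular $\Phi^*\za=\za$, while $\Phi^*t=\zc\cdot t$. Hence $\Phi^*(t\cdot\za)=t\cdot(\zc\cdot\za)$ and, applying $\xd$,
$$\Phi^*\zw_\za=\Phi^*\xd(t\cdot\za)=\xd\bigl(t\cdot(\zc\za)\bigr)=\zw_{\zc\za}\,.$$
Since $\Phi$ is a diffeomorphism, $\zw_{\zc\za}$ is non-degenerate and closed if and only if $\zw_\za$ is. Combining this with the first step and the definition of a contact form via condition (c) of Theorem \ref{t10}, we conclude that $\za$ is contact if and only if $\zc\za$ is.

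The only points needing genuine care are the sign bookkeeping in the contraction identities of the first step — which is exactly where the evenness of $\zc$ is used, guaranteeing that no parity factors intrude and that $\zc\za$ has the same parity as $\za$ — and the verification that $\Phi$ is a bona fide diffeomorphism of the graded supermanifold $\R^\ti\ti\cM$, which again rests on $\zc$ being an invertible even function. I expect no essential obstacle beyond this routine checking, the geometric content being entirely contained in the rescaling~(\ref{phimap}).
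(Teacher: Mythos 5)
Your proof is correct and follows essentially the same route as the paper: both invoke condition (c) of Theorem \ref{t10} and observe that $\xd\bigl(t\cdot(\zc\za)\bigr)$ is the pull-back of $\xd(t\cdot\za)$ under the fibrewise rescaling $(t,x)\mapsto(\zc(x)\,t,x)$, which is a diffeomorphism of $\R^\ti\ti\cM$ precisely because $\zc$ is even and invertible. Your additional check that the nowhere-vanishing property is preserved under multiplication by $\zc^{\pm1}$ is a small but welcome completion of a point the paper's proof leaves implicit.
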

\begin{pf} According to Theorem \ref{t10}(c), it suffices to show that if $\xd(t\cdot\za)$
is non-degenerate, then $\xd(t\cdot\zc\za)$ is non-degenerate. But $\xd(t\cdot\zc\za)=\xd(t\zc\cdot\za)$ is
the pull-back $\Psi^*(\xd(t\cdot\za))$ of $\xd(t\cdot\za)$ with respect to the diffeomorphism $\Psi:
\R^\ti\ti\cM\ra\R^\ti\ti\cM$, $\Psi(t,x^a)=(t\cdot\zc(x),x^a)$, so $\xd(t\cdot\zc\za)$, exactly like
$\xd(t\cdot\za)$, is non-degenerate.is non-degenerate.
\end{pf}

\noindent We will say that the contact forms $\za$ and $\zc\cdot\za$ are {\it equivalent}.
\begin{example}\label{e4} Note that in the above proposition we cannot take $\zc$ being an arbitrary
invertible function. Indeed, if in the example \ref{e1} we put $\zc=1+\zvy$, then $\zc\cdot\za=(1+\zvy)\xd
x+\zvy\xd\zvy$ is no longer a contact form. The point is that $\zc$ here is not even.
\end{example}

\medskip\noindent
In what follows we will reduce ourselves to the geometric case of contact forms with a given parity. It is
well known that on even manifolds contact forms can be written locally as $\za=\xd z-p_a\xd x^a$. This can be
easily extended to the super-case as follows (see \cite{Sch0,Sch} for the complex setting). The proof refers
to the results describing  the local form of even and odd symplectic structures \cite{Khu, Kos,Sch0,Sha}.
\begin{theorem}\label{t56}
(a) Every even contact form $\za$ on a supermanifold $\cM$ can be locally written as
\be\label{lfe}\za=\xd z -p_a\xd x^a+\frac{\epsilon_j}{2}\zvy^j\xd\zvy^j,\quad\epsilon_j=\pm 1\ee
for certain local coordinates $(z,x^a,p_b,\zvy^j)$ on $\cM$ in which $(z,x^a,p_b)$ are even and $(\zvy^j)$ are
odd.

(b) Every odd contact form $\za$ on a supermanifold $\cM$ can be locally written as
\be\label{lfo}\za=\xd \zx -\zvy^a\xd x^a\ee for certain local coordinates $(x^a,\zvy^b,\zx)$ on $\cM$
in which $(x^a)$ are even and $(\zvy^b,\zx)$ are odd.
\end{theorem}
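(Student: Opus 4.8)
The plan is to derive both normal forms from the super-Darboux theorems for even and odd symplectic structures, using the symplectification supplied by Theorem~\ref{t10}(c). First I would replace $\za$ by the two-form $\zw=\xd(t\cdot\za)$ on $P=\R^\ti\ti\cM$; by Theorem~\ref{t10}(c) it is symplectic, and it is even (resp.\ odd) precisely when $\za$ is even (resp.\ odd). The extra datum is that $\zw$ is a \emph{cone}: writing $Z=t\pa_t$ for the Euler vector field of the $\R^\ti$-action one has $\Ll_Z\zw=\zw$ together with the homogeneous primitive $i_Z\zw=t\cdot\za$. Hence, denoting by $\iota\colon\cM\cong\{t=1\}\hookrightarrow P$ the inclusion of the slice $\{t=1\}$ (transverse to $Z$), one recovers $\za=\iota^*(i_Z\zw)$. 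Thus any Darboux chart for $\zw$ that is homogeneous for $Z$ will restrict on $\{t=1\}$ to a normal form for $\za$.

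Next I would feed in the local classification of even and odd symplectic super-forms from \cite{Khu,Kos,Sch0,Sha}. In the even case it gives Darboux coordinates with $\zw=\xd\pi_I\we\xd q^I+\tfrac12\sum_j\epsilon_j\,\xd\eta^j\we\xd\eta^j$, the pairs $(\pi_I,q^I)$ even and the $\eta^j$ odd, where the signs $\epsilon_j=\pm1$ are the signature of the nondegenerate symmetric form that $\zw$ induces on the odd directions; in the odd case it gives $\zw=\xd\pi_a\we\xd q^a$ with each pair $(\pi_a,q^a)$ of opposite parity and no signs. The crucial refinement is to make such a chart \emph{homogeneous} for $Z$, so that $\zw$, the cone field $Z=t\pa_t$ and the primitive $i_Z\zw$ are normalised simultaneously. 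I would do this by an equivariant Darboux/Moser argument: take the nowhere-vanishing, weight-one function $t$ itself as the first momentum, its symplectic conjugate as the weight-zero coordinate $z$ (even case) or $\zx$ (odd case), pass to the symplectic orthogonal complement --- again a homogeneous symplectic submanifold of smaller rank --- and iterate, at each stage rescaling the remaining coordinates by powers of $t$ (for instance $\eta^j=\sqrt t\,\zvy^j$ in the even case) so that each carries a definite $Z$-weight and $i_Z\zw$ attains the standard Liouville form $\pi_I\xd q^I+\tfrac12\epsilon_j\eta^j\xd\eta^j$.

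Finally I would pull back to $\{t=1\}$ and relabel: in the even case $z=q^0$, $x^a=q^a$, $p_a=-\iota^*\pi_a$ and $\zvy^j=\iota^*\eta^j$, yielding $\za=\xd z-p_a\xd x^a+\tfrac{\epsilon_j}{2}\zvy^j\xd\zvy^j$ as in (\ref{lfe}); in the odd case $\zx=q^0$, $x^a=q^a$ and $\zvy^a=-\iota^*\pi_a$, yielding $\za=\xd\zx-\zvy^a\xd x^a$ as in (\ref{lfo}). I expect the main obstacle to be exactly the equivariance in the middle step: the standard super-Darboux proofs run a Moser homotopy that need not respect the $\R^\ti$-action, so the real work is to verify that the homotopy vector field (equivalently, the correcting coordinate changes) can be chosen $Z$-homogeneous, so that at each inductive stage the new Darboux coordinates acquire definite weight. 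The $\sqrt t$ rescaling shows that these weights may be non-integral and that $\sqrt t$ need only be defined near $t=1$; this is harmless, since only the transverse slice $\{t=1\}$ enters the conclusion.
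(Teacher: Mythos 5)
Your strategy --- symplectify, prove a \emph{homogeneous} Darboux theorem on $\R^\ti\ti\cM$, restrict to $\{t=1\}$ --- is genuinely different from the paper's, but as written it has a gap exactly where you yourself locate ``the real work'': the equivariant super-Darboux theorem is never proved. The references you lean on (\cite{Khu,Kos,Sch0,Sha}) give only the non-equivariant normal forms for even and odd symplectic supermanifolds; their Moser-type homotopies do not respect the $\R^\ti$-action, and upgrading them so that every coordinate acquires a definite $Z$-weight (weight $0$ for the $q$'s, $1$ for the $\pi$'s, $1/2$ for the $\eta$'s in the even case, with $t$ itself one of the momenta) is not a routine refinement. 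In fact, by your own reduction $\za=\iota^*(i_Z\zw)$, that homogeneous Darboux statement is \emph{equivalent} to Theorem \ref{t56}: restriction to $\{t=1\}$ gives the contact normal forms, and conversely the normal forms (\ref{lfe}), (\ref{lfo}) extend to a homogeneous Darboux chart for $\zw=\xd(t\cdot\za)$ by setting $\pi_0=t$, $q^0=z$, $\pi_a=-t\,p_a$, $q^a=x^a$, $\eta^j=\sqrt{t}\,\zvy^j$ (one checks $t\za=\pi_I\xd q^I+\frac{\epsilon_j}{2}\eta^j\xd\eta^j$, using $(\zvy^j)^2=0$). So the proposal trades the theorem for an equally strong statement, whose proof is left as the inductive sketch (conjugate coordinate to $t$, pass to homogeneous symplectic orthogonal complements, rescale by powers of $t$); that sketch is plausible, but it is precisely the content that has to be supplied.

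The paper's proof avoids the equivariance problem entirely by working on $\cM$ itself: take the Reeb vector field $Y$ of $\za$ ($i_Y\za=1$, $i_Y\xd\za=0$; in the odd case one must check the nontrivial involutivity $[Y,Y]=0$), straighten it to $\pa_{y^0}$ by the super flow-box theorem \cite{Bruz,Sha}, write $\za=\xd y^0-\za'$ with $\za'$ independent of $y^0$, apply the ordinary, non-equivariant super-Darboux theorem to the symplectic form $\xd\za'$ in the transverse coordinates, and absorb the closed remainder $\za'-\zvy^a\xd x^a=\xd f$ into the shift $\zx=y^0+f$. Note that your first step secretly contains this one: the Hamiltonian vector field $X_t$ of $t$ satisfies $i_{[Z,X_t]}\zw=\Ll_Z\xd t-i_{X_t}\zw=0$, so $[Z,X_t]=0$, and $X_t$ coincides up to sign with the lift of the Reeb field of $\za$; straightening it equivariantly is the same problem as straightening $Y$ downstairs, after which the remaining equivariance questions disappear because all further work happens in the Reeb-transverse coordinates. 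If you want to complete your argument, this is the cleanest way to fill the gap --- but then you will have reproduced the paper's proof one dimension up, which suggests the symplectization detour buys nothing here.
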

\begin{pf} It is easy to see that the above one forms are contact forms.
Suppose now that a contact form $\za$ is even (resp., odd) and let $Y$ be the Reeb vector field for $\za$,
i.e. $i_Y\za=1$ and $i_Y\xd\za=0$. In particular, $Y$ is even (resp., odd) nowhere vanishing and involutive
$[Y,Y]=0$ (which is a non-trivial condition in the odd case). This implies (cf. \cite{Bruz,Sha}) that we can
choose local coordinates $(y^0,y^i)$, $i>0$, such that $Y=\pa_0=\frac{\pa}{\pa {y^0}}$ and $\za=\xd y^0-\za'$.
Since $i_{\pa_0}\za'=0$ and $i_{\pa_0}\xd\za'=-i_{\pa_0}\xd\za=0$, we conclude that $\za'$ depends only on
coordinates $y^i$, $i>0$, and, due to properties of the contact form, $\xd\za'$ is a symplectic form in
coordinates $y^i$. Now, we will use the local description of a symplectic form (Darboux Theorem). Since now
the cases split, let us assume that $\za$ is odd (the proof in the even case is similar). We can therefore
assume that $(y^i)=(x^a,\zvy^b)$, $a,b=1,\dots,r$, where $x^a$ are even and $\zvy^a$ are odd, and that
$\xd\za'=\xd\zvy^a\xd x^a$. Since $\xd(\zvy^a\xd x^a)=\xd\za'$, the form $\za''=\za'-\zvy^a\xd x^a$ is closed,
so locally exact, $\za''=\xd f$ for a certain odd function $f$ in coordinates $(x^a,\zvy^b)$. Now, we can
introduce new coordinates $(x^a,\zvy^b,\zx)$, where $\zx=y_0+f(x^a,\zvy^b)$, in which $\za$ takes the form
(\ref{lfo}).
\end{pf}

\noindent Note that the above formulae make sense only for contact forms with a determined parity. In general,
there is much more freedom in choosing a contact form. For instance, if $\za$ is an even contact form and $F$
is any odd function, then $\za+\xd F$ is again a contact form but we cannot built a new coordinate adding $F$
to $z$, as the function $z+F$ has not a fixed parity. In what follows we will work only with even or odd
contact forms.

\bigskip
If $\za$ is a nowhere-vanishing even (resp., odd) one-form on a manifold $\cM$, then it spans a trivial
one-dimensional even vector subbundle $[\za]$ in $\sT^\* \cM$ (resp., in $\zP\sT^\* \cM$). With $\za$ we
associate a canonical smooth embedding $I_\za:\R\ti\cM\ra\sT^\ast\cM$ (resp., $I_\za:\R\ti\cM\ra\zP\sT^\*
\cM$) inducing an isomorphism of $\R\ti\cM$ with $[\za]$. If $(x^a)$ are local coordinates on $\cM$ in which
$\za=f_b(x)\xd x^b$ and $(x^a,p_a)$ are the corresponding Darboux coordinates, then the canonical embedding
$I_\za$ reads
\be\label{emb}I_\za(t,x^a)=(x^a,t\cdot f_b(x))\,.\ee

We can also consider the principal $R^\ti$-bundle $[\za]^\ti$ which is an open submanifold in $[\za]$
described locally by the condition $t\ne 0$. It is straightforward to see that the restriction of the
canonical symplectic form $\zw_\cM$ (resp., $\zw_\cM^\zP$) to $[\za]^\ti$ identified with $\R^\ti\ti\cM$ is
represented by $I^*_\za(\zw_\cM)$  (resp., $I_\za^*(\zw_\cM^\zP)$) and coincides with (\ref{sf}). Thus, in
view of Theorem \ref{t10}, we get the following.
\begin{theorem}\label{t10a} Let $\za$ be a nowhere-vanishing even (resp., odd) 1-form
on a manifold $\cM$. Then, $\za$ is  a contact form if and only if the trivial principal $\R^\ti$-bundle
$[\za]^\ti\simeq\R^\ti\ti\cM$ is, via the embedding $I_\za$, a symplectic submanifold of $\sT^\*\cM$ (resp.,
$\zP\sT^\*\cM$).

In this case the contact form $\za$ can be reconstructed from the symplectic form $\zw=I^*_\za(\zw_\cM)$
(resp., $\zw=I^*_\za(\zw_\cM^\zP)$) thanks to the formula
\be\label{reconstruction}i_{\zD}\zw(t,x)=t\cdot\za(x)\,,\ee
where $\zD=\zD_{[\za]^\ti}=t\pa_t$ is the fundamental vector field of the $\R^\ti$-principal bundle
$[\za]^\ti$. Moreover, this symplectic form is homogeneous with respect to the $\R^\ti$-action, i.e.
\be\label{hmg} (h_t)^*(\zw)=t\zw\,,\quad t\ne 0\,,
\ee
so that
\be\label{reconstruction1}\Ll_{\zD}\zw=\xd(t\cdot\za)=\zw\,.\ee
\end{theorem}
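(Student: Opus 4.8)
The plan is to assemble the statement from two facts already at our disposal: the characterization of contact forms in Theorem~\ref{t10}, and the identification---recorded in the paragraph immediately preceding the statement---of the pulled-back canonical symplectic form with the two-form (\ref{sf}). First I would note that the embedding $I_\za$ of (\ref{emb}) restricts to a diffeomorphism of $\R^\ti\ti\cM$ onto the open submanifold $[\za]^\ti\subset\sT^\ast\cM$ (resp., $\zP\sT^\ast\cM$), since $I_\za$ induces an isomorphism of $\R\ti\cM$ with $[\za]$. Hence, by the very definition of a symplectic submanifold, $[\za]^\ti$ is symplectic precisely when the restriction of $\zw_\cM$ (resp., $\zw_\cM^\zP$) to it is a symplectic form, i.e.\ when $I_\za^*(\zw_\cM)$ (resp., $I_\za^*(\zw_\cM^\zP)$) is symplectic on $\R^\ti\ti\cM$. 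That pulled-back form was already shown to coincide with $\zw=\xd(t\cdot\za)$ of (\ref{sf}); and Theorem~\ref{t10}(c) asserts exactly that $\za$ is a contact form if and only if this $\zw$ is symplectic on $\R^\ti\ti\cM$. The first equivalence therefore follows at once.

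For the reconstruction formula (\ref{reconstruction}) I would simply contract $\zw=\xd t\cdot\za+t\cdot\xd\za$ with the fundamental vector field $\zD=t\pa_t$. Because $\za$ and $\xd\za$ are pulled back from $\cM$, they contain no $\xd t$, so $i_\zD\za=0$ and $i_\zD\xd\za=0$; the only surviving contribution is $i_\zD(\xd t\cdot\za)=(i_\zD\,\xd t)\cdot\za=t\cdot\za$, using $i_{t\pa_t}\xd t=t$ together with the graded derivation property of the interior product (the sign in that rule is irrelevant here, since the second term vanishes). This gives $i_\zD\zw=t\cdot\za$, which is (\ref{reconstruction}).

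For the homogeneity (\ref{hmg}) I would use that the principal action reads $h_s(t,x)=(st,x)$ (I write $s$ for the group parameter to avoid a clash with the fibre coordinate $t$). Since $\za$ and $\xd\za$ are pulled back from $\cM$ they are $h_s$-invariant, whereas $h_s^*t=st$ and $h_s^*\,\xd t=s\,\xd t$; hence $h_s^*(\xd t\cdot\za)=s\,\xd t\cdot\za$ and $h_s^*(t\cdot\xd\za)=s\,t\cdot\xd\za$, so that $h_s^*\zw=s\,\zw$. The infinitesimal form (\ref{reconstruction1}) then follows either by differentiating (\ref{hmg}) at $s=1$, or, more directly, from Cartan's formula $\Ll_\zD\zw=\xd\,i_\zD\zw+i_\zD\,\xd\zw$: as $\zw=\xd(t\cdot\za)$ is exact we have $\xd\zw=0$, and (\ref{reconstruction}) yields $\Ll_\zD\zw=\xd(t\cdot\za)=\zw$.

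The argument is essentially formal once the pre-theorem identity $I_\za^*(\zw_\cM)=\zw$ is in hand, so the only point demanding genuine care is the bookkeeping of super-sign conventions in the odd case, where one works on $\zP\sT^\ast\cM$ with parity-reversed momenta. There I would check that the fibre coordinate $t$ of the \emph{even} line subbundle $[\za]$ is indeed even, so that $\zD=t\pa_t$ is an even vector field and the graded Leibniz rule for $i_\zD$ introduces no unexpected signs, and that $I_\za$ respects parities, i.e.\ that $t\cdot f_b(x)$ carries the parity of the momentum $p_b$. Granting this, the even and odd cases proceed in exact parallel and the three assertions follow as above.
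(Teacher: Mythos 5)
Your proposal is correct and follows essentially the same route as the paper, which presents Theorem~\ref{t10a} as a direct consequence of the identification $I_\za^*(\zw_\cM)=\xd(t\cdot\za)$ together with Theorem~\ref{t10}(c). The contraction, homogeneity, and Cartan-formula computations you spell out are exactly the routine verifications the paper leaves implicit.
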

\subsection{Contact structures}
\begin{definition}
An $\R^\ti$-principal bundle $(P,h)$ equipped with a 1-homogeneous symplectic form $\zw$, i.e. a symplectic
form satisfying (\ref{hmg}), we will call a {\it symplectic principal $\R^\ti$-bundle}. More generally, a
principal  $\R^\ti$-bundle $(P,h)$ equipped with a homogeneous Poisson structure $\cJ$ of degree -1, i.e. a
Poisson structure satisfying
\be\label{hmgp} (h_t)_*(\cJ)=-t\cJ\,,\quad t\ne 0\,,
\ee
we will call a {\it principal Poisson $\R^\ti$-bundle}.
\end{definition}
\noindent As a direct consequence of Theorem \ref{t10a} we obtain the following.
\begin{theorem}\label{t01} Let $C$ be an even line subbundle (vector subbundle of rank 1) in $\sT^\ast\cM$ (resp., $\zP\sT^\ast\cM$). The following are equivalent.

\begin{description}
\item{(a)}  $C$ is locally generated by contact (resp., odd contact) one-forms.

\item{(b)}  $C^\ti$ is a symplectic submanifold of $\sT^\ast\cM$ (resp., $\zP\sT^\ast\cM$).
\end{description}
Moreover, any even (resp., odd) symplectic principal $\R^\ti$ bundle over $\cM$ has a canonical symplectic
embedding into $\sT^\ast\cM$ (resp., $\zP\sT^\ast\cM$) as a principal $\R^\ti$-bundle of the form $C^\ti$. In
other words the association $C\mapsto C^\ti$ establishes a one-to-one correspondence between line subbundles
in $\sT^\ast\cM$ (resp., $\zP\sT^\ast\cM$) locally generated by contact forms and even symplectic (resp., odd
symplectic) principal $\R^\ti$-bundles over $\cM$.
\end{theorem}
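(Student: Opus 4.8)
The plan is to first prove the equivalence (a)$\Leftrightarrow$(b) by localizing the trivial-bundle statement of Theorem \ref{t10a}, and then to construct a canonical symplectic embedding that yields the ``moreover'' part and the asserted bijection. For the equivalence I would argue locally. An even line subbundle $C\subset\sT^*\cM$ (resp.\ $\zP\sT^*\cM$) admits, near any point, a nowhere-vanishing even (resp.\ odd) generating one-form $\za$, so locally $C=[\za]$ and $C^\ti=[\za]^\ti$. Any two such generators differ by an invertible \emph{even} function, since $C$ is an even line bundle; by the Proposition preceding this theorem, contactness is preserved under multiplication by an invertible even function, so whether a local generator is contact does not depend on its choice. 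As non-degeneracy of the restricted form is a local property, $C^\ti$ is a symplectic submanifold iff each local $[\za]^\ti$ is, and Theorem \ref{t10a} identifies the latter with $\za$ being a contact form. This gives (a)$\Leftrightarrow$(b).

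For the ``moreover'' part, let $(P,h,\zw)$ be an even symplectic principal $\R^\ti$-bundle over $\cM=P/\R^\ti$, with projection $\pi\colon P\ra\cM$ and fundamental (Euler) vector field $\zD$. I would define $\Phi\colon P\ra\sT^*\cM$ by
\[
\Phi(p)(X)=\zw_p(\zD_p,\widetilde X),\qquad X\in\sT_{\pi(p)}\cM,\quad \pi_*\widetilde X=X.
\]
This is well defined: since $\zD$ is even, $i_\zD i_\zD\zw=0$, so the value is unchanged when $\widetilde X$ is altered by a multiple of $\zD_p$; equivalently, $\Phi$ is the one-form $\theta=i_\zD\zw$ descended through $\pi$. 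Using $h_t^*\zw=t\zw$ together with $(h_t)_*\zD=\zD$ and $\pi\circ h_t=\pi$, one checks $\Phi(h_t(p))=t\,\Phi(p)$, so $\Phi$ is $1$-homogeneous. Non-degeneracy of $\zw$ forces $i_\zD\zw\ne0$ everywhere (otherwise $\zD=0$), and as $\theta(\zD)=0$ this means $\Phi(p)\ne0$ for all $p$. Hence $\Phi$ is an $\R^\ti$-equivariant bundle map covering $\mathrm{id}_\cM$ that is fibrewise injective, a diffeomorphism onto the total space $C^\ti$ of the line subbundle $C\subset\sT^*\cM$ whose fibre over $\pi(p)$ is $\la\Phi(p)\ran$ (smoothness and local triviality of $C$ follow since $\Phi$ yields a local nowhere-vanishing generating one-form).

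It remains to see that $\Phi$ is symplectic, and the key identity is the moment-map-type relation $\Phi^*\zs_\cM=\theta=i_\zD\zw$. This follows from the definition of the Liouville form and $\pi_{\sT^*\cM}\circ\Phi=\pi$: for $u\in\sT_pP$ one gets $(\Phi^*\zs_\cM)_p(u)=\Phi(p)(\pi_*u)=\zw_p(\zD_p,u)$, taking $u$ itself as the lift of $\pi_*u$. Applying $\xd$ and using $\xd\theta=\Ll_\zD\zw-i_\zD\xd\zw=\zw$ (by $1$-homogeneity and closedness of $\zw$) gives $\Phi^*\zw_\cM=\zw$, so $\Phi$ realizes $P$ as the symplectic submanifold $C^\ti$. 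Conversely, for any $C$ as in (a) the restriction of $\zw_\cM$ to $C^\ti$ is $1$-homogeneous, because $\zs_\cM$, hence $\zw_\cM$, scales by $t$ under fibrewise multiplication by $t$; thus $C\mapsto C^\ti$ indeed lands among symplectic principal $\R^\ti$-bundles, and $\Phi$ supplies its inverse, establishing the bijection.

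The odd case is entirely parallel: one replaces $\sT^*\cM,\zw_\cM,\zs_\cM$ by $\zP\sT^*\cM,\zw_\cM^\zP,\zs_\cM^\zP$ and the generating forms by odd ones, and every step survives because $\zD$ is even (so $i_\zD i_\zD\zw=0$ and $\theta(\zD)=0$ still hold) while $\zw_\cM^\zP$ remains non-degenerate. I expect the main obstacle to be the construction and verification of $\Phi$ — in particular the identity $\Phi^*\zs_\cM=i_\zD\zw$ — rather than (a)$\Leftrightarrow$(b), which is essentially a globalization of Theorem \ref{t10a}.
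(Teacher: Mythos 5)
Your proposal is correct and takes essentially the same route as the paper: the equivalence (a)$\Leftrightarrow$(b) is obtained by localizing Theorem \ref{t10a}, and the canonical embedding is exactly the paper's map $\Psi$ defined by the semi-basic one-form $i_\zD\zw$, which is shown to be symplectic onto $C^\ti$ using $1$-homogeneity and closedness of $\zw$. The only difference is presentational: the paper verifies this in local homogeneous coordinates (writing $\zw=\xd t\we\za+\zw'$ and $\Psi(t,x)=t\cdot\za(x)$), whereas you argue invariantly via the Liouville-form identity $\Phi^*\zs_\cM=i_\zD\zw$ and Cartan's formula.
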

\begin{proof}
We will work with the even case; the proof in the odd case is parallel. In view of Theorem \ref{t10a}, the
only nontrivial part is to show that any symplectic principal $\R^\ti$-bundle $(P,\zw,h)$ over $\cM$ is of the
form $C^\ti$ for a for $C$ as in (a).

If $\zD$ is the Euler vector field associated with $h$, then the 1-form $\wt{\za}=i_{\zD}\zw$ on $P$ is
semi-basic and defines a map $\Psi:P\ra\sT^\ast\cM$ which is an isomorphism of the principal $\R^\ti$-bundle
$P$ onto $C^\ti\subset\sT^\ast\cM$, where $C$ is the line subbundle in $\sT^\ast\cM$ spanned by the image of
$\wt{\za}$. Indeed, let us write in local bundle coordinates $(t,x)$ in $P$, with $\zD=t\pa_t$, the symplectic
form $\zw$ as $\zw=\xd t\we{\za}+\zw'$ for a certain semi-basic one-form ${\za}$ and a semi-basic two-form
$\zw'$. Since $i_\zD\zw'=0$ and  $i_{\zD}\za=0$, we get $\wt{\za}=t\cdot\za$. As $\zD$ is 0-homogeneous,
$i_{\zD}\zw$ is 1-homogeneous, so $\za$ is homogeneous of degree 0, thus $\R^\ti$-invariant. Being
simultaneously semi-basic it is actually basic, so it can be regarded as a 1-form on $\cM$. Moreover,
$\xd(t\cdot\za)=\zw$, so that $\za$ is a contact form. Hence, $\Psi(t,x)=t\cdot\za(x)$ is a principal bundle
isomorphism which does not depend on the choice of homogeneous coordinates $(t,x)$, since a change in the
choice of the local trivialization of $P$ results in multiplication of $\za$ by an invertible function on
$\cM$ and in multiplication of $t$ by the inverse of this function.
\end{proof}

\begin{definition} A {\it contact structure} (resp., {\it an odd contact structure}) on a
manifold $\cM$ is an even line subbundle $C$ in the cotangent bundle $\sT^\ast\cM$ (resp., $\zP\sT^\ast\cM$)
generated locally by contact forms. Equivalently, such a contact structure can be viewed also as an even
(resp., odd) symplectic principal $\R^\ti$-bundle.
\end{definition}
\begin{remark} The symplectic manifold $(C^\ti,\zw)$ is usually called the {\it
symplectization} of the contact structure $C$ (cf. \cite{Sch0,Sch}). We prefer, finding it much more elegant
and fruitful, to view the symplectization as the contact structure itself and to identify contact structures
with symplectic principal $\R^\ti$-bundles. We will use interchangeably both descriptions of a contact
structure in the sequel.

Note also that the vector bundle $C^\ast$, dual to $C$, is clearly the quotient $\sT\cM/C^0$, where $C^0$ is
the annihilator of $C$, so locally $C^0$ is $\Ker(\za)$ for any contact form $\za$ generating $C$. Hence,
local trivializations of $C^\ast$ are represented by cosets of Reeb vector fields of the corresponding contact
forms. The subbundle $C^0$ in $\sT\cM$ was historically the original concept of a contact structure.
\end{remark}

\section{Graded contact structures}
\subsection{Contact structures of weight $k$}
\begin{definition} Let $\cM$ be a $n$-graded manifold. An even line subbundle $C$ in $\sT^\ast\cM$
(resp., $\zP\sT^\ast\cM$) is a {\it contact} (resp, an {\it odd contact}) {\it structure of weight $k\in\N^n$}
on $\cM$ if it is locally generated by contact one-forms of weight $k$. If $\cM$ is an $n$-vector bundle, a
contact structure of weight $\ao$ on $\cM$ we will call an {\it $n$-linear contact structure} or a {\it
contact $n$-vector bundle}. A $k$-manifold $\cM$ equipped with a contact structure of weight $k$ we will call
a {\it contact $k$-manifol}.
\end{definition}
\begin{remark}\label{r1a} Note that the object in question is well defined, i.e. the weight does not depend on the choice of the
homogeneous contact form. Indeed, if $\za'$ is another homogenous contact form defining the same contact
structure, then $\za'=f\cdot\za$ for certain homogeneous and invertible function, thus function of weight 0.
Note also that an $n$-graded manifold admits a contact structure of weight $k$ only if its degree is $\le k$.
Indeed, let $\za$ be a local contact form generating the structure. Since $\za$ and $i_{\pa_{x^a}}\xd\za$
generate the cotangent bundle, since $w(\pa_{x^a})\le 0$ and $w(\za)=w(\xd\za)=k$, all homogeneous local
coordinates $x^a$ have weight $\le k$.

\end{remark}

\begin{definition}\label{d1} An (even or odd) {\it $n$-graded symplectic principal $\R^\ti$-bundle of weight $k\in\N^n$} on $\cM$ is
an $n$-graded principal $\R^\ti$-bundle $(P,h^0,h^1,\dots,h^n)$ over $\cM=P/\R^\ti$, equipped additionally
with an (even or odd, respectively) $\R^\ti$-homogeneous symplectic form $\zw$ of weight $k$,
\be\label{wsf}
(h_t^0)^*\zw=t\zw\,, \quad (h^i_t)^*\zw=t^{k_i}\zw\quad \text{for}\quad t\in\R^\ti\quad i=1,\dots,n\,.
\ee
An {\it   $n$-linear (even or odd) symplectic principal $\R^\ti$-bundle} $(P,\zw,h^0,\dots,h^n)$ is an
$n$-linear principal $\R^\ti$-bundle $(P,h^0,\dots,h^n)$ equipped with an (even or odd, respectively)
symplectic form $\zw$ that $\zw$ is 1-homogeneous, $(h^i_t)^*\zw=t\zw$ for $t\ne 0$, with respect to all
actions $h^i$ (in particular, with respect to the Euler vector fields $\zD_0,\dots,\zD_n$).

More generally, a {\it  $n$-linear (even or odd) principal Poisson $\R^\ti$-bundle} $(P,\cJ,h^0,\dots,h^n)$ is
a similar structure with the role of  the homogeneous symplectic form played by a Poisson tensor $\cJ$ on $P$
which is homogenous of degree $-1$ with respect to all homogeneity structures, i.e.
$(h^i_t)_*\cJ=\frac{1}{t}\cJ$ for $t\ne 0$ (in particular, $\Ll_{\zD_i}\cJ=-\cJ$ for the Euler vector fields
$\zD_i$), $i=0,\dots,n$ .
\end{definition}
\begin{example}\label{ex1} Let $(\bar{P},\bar{\cJ},\bar{h}^0)$ be a principal Poisson $\R^\ti$-bundle. It is easy to see that the complete tangent lift $\dt\bar{\cJ}$ of $\bar{\cJ}$ (cf. \cite{GU,GU3}) is homogeneous of bi-degree $(-1,-1)$ on $\sT \bar{P}$. In this way we obtain a  linear principal Poisson $\R^\ti$-bundle $(P,\cJ,h^0,h^1)$, the {\it complete tangent lift} of $(\bar{P},\bar{\cJ},\bar{h}^0)$, where $P=\sT \bar{P}$, $\cJ=\dt\bar{\cJ}$, $h^0=\sT\bar{h}^0$, and $h^1$ representing the canonical vector bundle structure $\sT \bar{P}\to \bar{P}$.
\end{example}

\begin{theorem}\label{t01a}
The correspondence $C\mapsto C^\ti$ establishes a one-to-one correspondence between even (resp., odd) contact
structures of weight $k\in\N^n$ on an $n$-graded manifold $\cM$ and even  (resp., odd) {$n$-graded  symplectic
principal $\R^\ti$-bundles of weight $k\in\N^n$} over $\cM$. In other words, contact structures of weight
$k\in\N^n$ can be viewed as  {$n$-graded  symplectic principal $\R^\ti$-bundles of weight $k$}.
\end{theorem}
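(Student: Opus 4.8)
The plan is to build on Theorem \ref{t01}, which already delivers the bijection $C\mapsto C^\ti$ at the ungraded level between even (resp., odd) contact structures on $\cM$ and even (resp., odd) symplectic principal $\R^\ti$-bundles over $\cM$. The only thing left is to show that the grading of $\cM$ transports to a compatible $n$-graded principal $\R^\ti$-bundle structure on $C^\ti$, and that the weight-$k$ condition on the generating contact forms matches the weight-$k$ condition (\ref{wsf}) on $\zw$. I will treat the even case; the odd case runs in parallel with $\sT^\ast\cM,\zw_\cM$ replaced by $\zP\sT^\ast\cM,\zw_\cM^\zP$.

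Forward direction. Suppose $C$ is a contact structure of weight $k$, locally generated by a contact form $\za=f_b(x)\xd x^b$ with $w(\za)=k$, where $(x^a)$ are homogeneous coordinates on $\cM$ with $\zD^i_\cM=\sum_a w^i_a x^a\pa_{x^a}$. I would equip $P=C^\ti$ with weight vector fields given by the restrictions to $C^\ti$ of the $k_i$-phase lifts $\dts(k_i)\zD^i_\cM$ of (\ref{plift}). A short computation in the parametrization $(t,x^a)\mapsto t\cdot\za(x)\in\sT^\ast\cM$ shows that the weight condition $w(\za)=k$, i.e. $\zD^i_\cM(f_b)=(k_i-w^i_b)f_b$, is precisely what makes $\dts(k_i)\zD^i_\cM$ tangent to $C^\ti$, where it restricts to $\zD^i_P=\sum_a w^i_a x^a\pa_{x^a}$; since the lifts are intrinsically defined on $\sT^\ast\cM$, no choice of local trivialization enters. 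Together with the Euler field $\zD^0=t\pa_t$ of the principal action these fields pairwise commute (and the central symmetry $h^0_{-1}$ preserves each $\zD^i_P$), so (\ref{cr}) holds and $(P,h^0,h^1,\dots,h^n)$ is an $n$-graded principal $\R^\ti$-bundle. Finally, because $\dts(k_i)\zD^i_\cM$ is by construction the lift under which $\zw_\cM$ acquires weight $k_i$, the restricted symplectic form $\zw=\zw_\cM|_{C^\ti}=\xd(t\cdot\za)$ satisfies $\Ll_{\zD^i_P}\zw=k_i\zw$ and $\Ll_{\zD^0}\zw=\zw$, which is exactly (\ref{wsf}).

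Reverse direction. Conversely, let $(P,\zw,h^0,\dots,h^n)$ be an $n$-graded symplectic principal $\R^\ti$-bundle of weight $k$. Theorem \ref{t01} embeds it as $C^\ti\subset\sT^\ast\cM$ and recovers, via (\ref{reconstruction}), a contact form $\za$ with $i_{\zD^0}\zw=t\cdot\za$, where $\zD^0=t\pa_t$. It remains to verify $w(\za)=k$. Choosing, by Theorem \ref{t2}, coordinates $(t,x^a)$ with $\zD^0=t\pa_t$ and $\zD^i=\sum_a w^i_a x^a\pa_{x^a}$ (so $[\zD^i,\zD^0]=0$ and $w(t)=0$), the Cartan identity gives
\be\label{wtmatch}\Ll_{\zD^i}(t\cdot\za)=\Ll_{\zD^i}\!\left(i_{\zD^0}\zw\right)=i_{[\zD^i,\zD^0]}\zw+i_{\zD^0}\Ll_{\zD^i}\zw=k_i\,i_{\zD^0}\zw=k_i\,t\cdot\za\,,\ee
using $\Ll_{\zD^i}\zw=k_i\zw$ from (\ref{wsf}). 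Since $w(t)=0$ the left-hand side equals $t\cdot\Ll_{\zD^i}\za$, whence $\Ll_{\zD^i}\za=k_i\za$ and $w(\za)=k$; so $C$ is generated by contact forms of weight $k$.

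The two assignments are mutually inverse because they merely refine the ungraded bijection of Theorem \ref{t01}, the grading attached on each side being uniquely determined — on the contact side by weight-$k$ generation, on the symplectic side by (\ref{wsf}). The only genuine computation is the interchange of $\Ll_{\zD^i}$ with the contraction $i_{\zD^0}$ in (\ref{wtmatch}); this is the step where the compatibility condition (\ref{cr}), equivalently $[\zD^i,\zD^0]=0$ built into the definition of an $n$-graded principal $\R^\ti$-bundle, is indispensable, and it is the point I would expect to require the most care (particularly in keeping track of signs in the odd case).
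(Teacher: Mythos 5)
Your proposal is correct and follows essentially the same route as the paper's proof: the forward direction via the $k_i$-phase lifts $\sT^\ast(k_i)\bar{h}^i$ on $\sT^\ast\cM$, their tangency to $C^\ti$ being equivalent to the weight-$k$ condition on the generating contact forms, and the reverse direction by reducing to Theorem \ref{t01} and checking the weight of $\za$ recovered from $i_{\zD^0}\zw$ using $[\zD^i,\zD^0]=0$. Your explicit Cartan-formula computation in (\ref{wtmatch}) merely spells out what the paper states tersely (``as $\zw$ is of weight $k$ and $t$ is of weight $0$, the contact form is of weight $k$''), so the two arguments coincide in substance.
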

\begin{proof} As the proof is completely analogous in the odd case, to fix our attention let us assume that the case is
even. Let $C\subset\sT^\ast\cM$ be a contact structure of weight $k\in\N^n$ on an $n$-graded manifold
$(\cM,\bar{h}^1,\dots,\bar{h}^{n})$. On the cotangent bundle $\sT^\ast\cM$ consider the phase lifts
$h^i=\sT^\ast(k_i)\bar{h}^i$, $i=1,\dots,n$, which induce an $n$-graded manifold structure on $\sT^\ast\cM$
with weight vector fields $\zD^1,\dots,\zD^n$ (cf. Remark \ref{r1a}). It is easy to see that if $\za$ is a
1-form homogeneous of weight $k$ on $\cM$, then the vector fields $\zD^i$, $i=1,\dots,n$, are tangent to the
one-dimensional subbundle $[\za]\subset\sT^\ast\cM$ spanned by $\za$. This implies that if
$C\subset\sT^\ast\cM$ is a contact structure of weight $k$, then the vector fields $\zD^i$ induce on $C^\ti$ a
structure of an $n$-graded principal $\R^\ti$ bundle. Moreover, since the canonical symplectic form
$\zw_{\sT^*\cM}$ is 1-homogeneous with respect to the canonical homogeneity structure $h^0$ in the vector
bundle $\sT^\ast\cM$ and of weight $k$ with respect to $\zD^1,\dots,\zD^n$, its restriction $\zw$ to $C^\ti$
is $\R^\ti$-invariant and homogeneous of weight $k$.

Conversely, if $(P,h^0,h^1,\dots,h^n)$ is an $n$-graded symplectic principal $\R^\ti$-bundle over $\cM$ of
weight $k$, then $h^i$ commuting with $h^0$ projects to homogeneity structures $\bar{h}^i$, $i=1,\dots,n$, on
$\cM$ that turns $\cM$ into an $n$-graded manifold (Theorem \ref{t2}). Moreover, if $\zD^0$ is the Euler
vector field associated with the $\R^\ti$-action $h^0$, then the 1-form $\wt{\za}=i_{\zD^0}\zw$ on $P$ defines
a map $\Psi:P\ra\sT^\ast\cM$ which is a symplectic isomorphism of the principal $\R^\ti$-bundle $P$ onto
$C^\ti\subset\sT^\ast\cM$, where $C$ is the line subbundle in $\sT^\ast\cM$ spanned by the image of $\wt{\za}$
(see the proof of Theorem \ref{t01}). In the local coordinates $(t,x)$ described in Theorem \ref{t2},
$\wt{\za}(t,x)=t\cdot\za(x)$ for a contact form $\za$ and the image of $\Psi$ is locally generated by $\za$.
As $\zw$ is of weight $k$, $t\cdot\za=i_{t\pa_t}\zw$, and $t$ is of weight 0, the contact form is of weight
$k$.
\end{proof}

\subsection{Canonical contact structures}\label{sec1}
To present some examples of canonical contact structures, let us start with an even line bundle $\zt:L\ra\cM$
over $\cM$ (for an odd line bundle the construction is similar). We have therefore an open covering of $\cM$
by coordinate charts $U_i$ with coordinates $(x_i^a)$ and an open covering of $L$ by coordinate charts
$V_i=\zt^{-1}(U_i)$, diffeomorphic to $\R\ti U_i$, with coordinates $(z_i,x^a_i)$, where $z_i$ is even and
varying through all $\R$. The change of coordinates takes the form
\be\label{cchange}x_j=\zf_j^i(x_i),\quad z_j=\zc_j^i(x_i)\cdot z_i\,,\ee
for certain diffeomorphism $\zf_j^i$ and an even smooth nowhere-vanishing function $\zc_j^i$.

The {\it first jet bundle} $\sJ^1L$ is a vector bundle over $\cM$ of first jets of local sections of $L$
defined as follows. In a given trivialization $V_i\simeq\R\ti U_i$ any section $z_i=f(x_i)$ is represented by
the function $f$ and $\sJ^1L$ over $V_i$ is diffeomorphic to $\R\ti\sT^\ast V_i$ with coordinates
$(z_i,x_i^a,\mathfrak{p}_b^i)$. The coordinate change in $L$ results in the coordinate change in $\sJ^1L$ of
the form
\be\label{cchange1}
x_j=\zf_j^i(x_i),\quad z_j=\zc_j^i(x_i)\cdot z_i,\quad \mathfrak{p}^j=\left(\zc_j^i(x_i)\cdot \mathfrak{p}^i+
\frac{\pa\zc_j^i}{\pa x_i}(x_i)\cdot z_i\right)\frac{\pa x_i}{\pa x_j}\,.
\ee
The first jet bundle $\sJ^1L$ is a vector bundle with local linear coordinates $z_i$ and $\mathfrak{p}^i_a$.
Every section $f$ of $L$ induces a section $j^1(f)$ of this bundle, its {\it first jet prolongation}, given
locally by $z_i=f(x_i)$, $\mathfrak{p}^i_a=\frac{\pa f}{\pa x^a_i}(x_i)$. A similar construction of $\sJ^1L$
can be done when starting with an odd line bundle $L$.

The first jet bundle $\sJ^1L$ is a tool for representing first-order differential operators on $L$ by vector
bundle morphisms. Indeed, for any vector bundle $E$ over $\cM$, vector bundle morphisms
$D\in\operatorname{Hom}_\cM(\sJ^1L,E)$ covering the identity on $\cM$ represent first-order differential
operators $\wt{D}\in\Sec(\DO^1(L,E))$ in the obvious way: $\wt{D}(f)=D(j^1(f))$. We will often use the
identification
$$\operatorname{Hom}_\cM(\sJ^1L,E)\simeq (\sJ^1L)^\ast\ot_\cM
E\simeq\DO^1(L,E)\,.
$$
In particular, $\DO^1(L,L)=(\sJ^1L)^\ast\ot_\cM L$.

There is an even line subbundle $C_L$ in $\sT^\ast \sJ^1L$ whose sections $\zb$ have the property that the
pull-back $j^1(f)^*(\zb)$ is 0 for all $f\in\Sec(L)$. In local coordinates these sections have the form
$\zc(x_i)\cdot(\xd z_i-\mathfrak{p}_a^i\xd x^a_i)$, so the line bundle $C_L$ is generated locally by contact
forms $\xd z_i-\mathfrak{p}_a^i\xd x^a_i$, whence local coordinates $(t_i,z_i,x^a_i,\mathfrak{p}_a^i)$ on
$C_L$. It follows that $C_L$ is an even contact structure locally of the form (\ref{lfe}) with no $\zvy^i$
present.

Since $\sJ^1L$ is a vector bundle over $\cM$, the cotangent bundle $\sT^*\sJ^1L$ is a double vector bundle.
The non-obvious vector bundle structure is associated with a canonical fibration $\zt_1:\sT^*\sJ^1L\to
(\sJ^1L)^*$. The latter can be reduced to a vector bundle structure
\be\label{lcst}\zt_1:C_L^\ti\to (L^*)^\ti\,,\quad (t_i,z_i,x^a_i,\mathfrak{p}_a^i)\mapsto (t_i,x^a_i)\,,\ t\ne 0\,,
\ee
which makes $C_L^\ti$ into a linear contact structure, as $\za_i=\xd z_i-\mathfrak{p}_a^i\xd x^a_i$ is linear.
With $\zP C_L^\ti$ we will denote the parity shift associated with this vector bundle structure.

The local maps $(z_i,x_i^a,\mathfrak{p}_b^i)\mapsto (z_i,x_i^a)$ give rise to a vector bundle morphism
$\sJ^1L\ra L$ whose kernel has local coordinates $(x_i^a,\mathfrak{p}_b^i)$ with transformation rules
$$x_j=\zf_j^i(x_i),\quad \mathfrak{p}^j=\zc_j^i(x_i)\cdot \mathfrak{p}^i\cdot\frac{\pa x_i}{\pa x_j}\,,
$$
so it can be identified with $\sT^\ast\cM\ot_\cM L$. Thus, we get the {\it jet bundle exact sequences} of
vector bundle morphisms:
\be\label{jbes}0\ra\sT^\ast\cM\ot_\cM L\ra \sJ^1L\ra L\ra 0\ee
if $L$ is even and
\be\label{jbesodd}0\ra\zP\sT^\ast\cM\ot_\cM  L\ra \sJ^1L\ra L\ra 0\ee
\begin{example} {\bf (The case of the trivial line bundle)}

\smallskip\noindent
Let $L=\R^{1|0}\ti\cM$ be the trivial even line bundle over $\cM$. Then, $\sJ^1L=\R^{1|0}\ti\sT^*\cM$ with
local coordinates $(z,x^a,p_a)$and the {jet bundle exact sequence} of vector bundle morphisms reads as
$$0\ra\sT^\ast\cM\ra \R^{1|0}\ti\sT^*\cM\ra \R^{1|0}\ti\cM\ra 0\,,$$
and the contact structure $C_L$ is a trivial line subbundle in $\sT^*\left(\R^{1|0}\ti\sT^*\cM\right)$
generated by the contact form $\za=\xd z-p_a\xd x^a$. Thus, $C_L^\ti=\R^\ti\ti\R^{1|0}\ti\sT^*\cM$ with local
coordinates $(t,z,x^a,p_a)$, $t\ne 0$, with the symplectic form $\zw=\xd t\xd z-p_a\xd t\xd x^a-t\xd p_a\xd
x^a$ which is linear with respect to the vector bundle structure
$C_L^\ti\ni(t,z,x^a,p_a)\mapsto(t,x^a)\in\R^\ti\ti\cM$.

If $L=\R^{0|1}\ti\cM$ is the trivial odd line bundle over $\cM$, then $\sJ^1L=\R^{0|1}\ti\zP\sT^*\cM$ with
local coordinates $(z,x^a,\bar{p}_a)$ ($\bar{p}_a$ has the parity opposite to $p_a$) and the {jet bundle exact
sequence} of vector bundle morphisms reads as
$$0\ra\zP\sT^\ast\cM\ra \R^{0|1}\ti\zP\sT^*\cM\ra \R^{0|1}\ti\cM\ra 0\,,$$
and the contact structure $C_L$ is a trivial even line subbundle in
$\zP\sT^*\left(\R^{1|0}\ti\zP\sT^*\cM\right)$ generated by the contact form $\za=\xd z-\bar{p}_a\xd x^a$.
Since in this case $z$ is odd and $x^a$ and $\bar{p}_a$ have opposite parities, the contact form is odd. Thus,
$C_L^\ti=\R^\ti\ti\R^{0|1}\ti\zP\sT^*\cM$ with local coordinates $(t,z,x^a,\bar{p}_a)$, $t\ne 0$, with the
symplectic form $\zw=\xd t\xd z-\bar{p}_a\xd t\xd x^a-t\xd \bar{p}_a\xd x^a$ which is odd and linear with
respect to the vector bundle structure $C_L^\ti\ni(t,z,x^a,\bar{p}_a)\mapsto(t,x^a)\in\R^\ti\ti\cM$.

\end{example}

\begin{definition} The even (resp., odd) contact structure $C_L$ on $\sJ^1L$ associated with an even (resp., odd)
line bundle $L$ we will call {\it canonical}. The corresponding symplectic principal $\R^\ti$-bundle $C^\ti_L$
will also be called, with some abuse of terminology, a {\it canonical contact structure}.
\end{definition}

\section{First jet bundles revisited}\label{sfjb}
Let $L$ be an even line bundle over $\cM$. Note that the canonical contact structure $C_L$ is a {double vector
bundle}. Indeed, as $\sJ^1L$ is a vector bundle, $\sT^\ast \sJ^1L$ is a double vector bundle and $C_L$ is a
double vector subbundle of the latter. This is because the double vector bundle structure in $\sT^\ast \sJ^1L$
is determined by the commuting Euler vector fields: the Euler vector field $\wt{\zD}_0=\zD_{\sT^\ast \sJ^1L}$
of the vector bundle structure over $\sJ^1L$ and the Euler vector field $\wt{\zD}_1=\sT^\ast\zD_{\sJ^1L}$,
i.e. the phase lift of the Euler vector field $\zD_{\sJ^1L}$ of the vector bundle structure on $\sJ^1L$, and
both Euler vector fields are tangent to $C_L$. In local affine coordinates,
$$\wt{\zD}_0=\zD_{\sT^\ast \sJ^1L}=t\pa_t,\quad
\wt{\zD}_1=\sT^\ast\zD_{\sJ^1L}=\mathfrak{p}_a\pa_{\mathfrak{p}_a}+z\pa_z\,,
$$
and the corresponding diagram of the double vector bundle structure reads
\be\label{DB}\xymatrix{
C_L\ar[rr]^{\zt_0} \ar[d]^{\zt_1} && \sJ^1L\ar[d]^{\bar{\zt_1}} \\
{L^\ast}\ar[rr]^{\bar{\zt_0}} && \cM }
\ee
An important observation is that section of the line bundle $L$ are identified with linear functions on
$L^\ast$, i.e., in turn, with $(1,0)$-homogeneous functions with respect to the pair of the Euler vector
fields $(\wt{\zD_0},\wt{\zD_1})$. The Euler vector fields $\wt{\zD}_i$, $i=0,1$, are tangent also to $C_L^\ti$
which becomes a linear $\R^\ti$-principal bundle $(C_L^\ti,\wt{h}^0,\wt{h}^1)$. The corresponding projections
are ${\zt}_0:C_L^\ti\ra \sJ^1L$ and ${\zt}_1:C_L^\ti\ra (L^\ast)^\ti$, as the $\zD_1$-invariant functions can
be identified with functions on $(L^\ast)^\ti$. To see the latter we will use another double bundle, namely
$\sT^\ast(L^\ast)^\ti$. One bundle structure is clearly the vector bundle structure over $(L^\ast)^\ti$
associated with the Euler vector field $\zD_1$ or the corresponding homogeneity structure $h^1$. The other is
associated with the phase lift ${h}^0=\sT^\ast h$ of the natural principal $\R^\ti$-action $h$ on
$(L^\ast)^\ti$. Let $\zD_0$ be the Euler vector field associated with $\wt{h}^0$. In local affine coordinates
$(t,x^a)$, $t\ne 0$, on $(L^\ast)^\ti$ and the corresponding Darboux coordinates $(t,x^a,z,p_a)$ on
$\sT^\ast(L^\ast)^\ti$,
$$\zD_1=z\pa_z+p_a\pa_{p_a},\quad \zD_0=t\pa_t+p_a\pa_{p_a}\,,$$
so that $h^1_u$ is the multiplication by $u$ in the coordinates $(z,p_a)$ and $h^0_u$ is the multiplication by
$u$ in coordinates $(t,p_a)$. It is easy to see that the orbit space of the principal $\R^\ti$ action ${h}^0$
on $\sT^\ast(L^\ast)^\ti$ can be canonically identified with $\sJ^1L$.

Of course, we can start with an arbitrary principal $\R^\ti$-bundle $\bar{\zt}:P\ra \cM$ with the homogeneity
structure $h$ instead of $(L^\ast)^\ti\ra\cM$ and considering the even and odd line bundles $P^{1\mid 0}$ and
$P^{0\mid 1}$ associated with $P$ we get the following.
\begin{theorem}\label{plift0} The phase lift ${h}^0=\sT^\ast h$ of the homogeneity structure of a principal
$\R^\ti$-bundle $\bar{\zt}:P\ra \cM$ defines canonical principal $\R^\ti$-bundle structures on the fibrations
${\zt}:\sT^\ast P\ra \sJ^1P^e$ and ${\zt}^\zP:\zP\sT^\ast P\ra \sJ^1P^o$.
\end{theorem}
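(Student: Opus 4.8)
The plan is to reduce everything to the local normal form of a principal $\R^\ti$-bundle provided by Theorem \ref{t2} and then to recognize the orbit space of the phase-lifted action as a first jet bundle, exactly as was done for $P=(L^*)^\ti$ in the discussion preceding the theorem. First I would choose, by Theorem \ref{t2}, an atlas of $\R^\ti$-invariant charts on $P$ with homogeneous coordinates $(t,x^a)$, $t\in\R^\ti$, in which $h_s(t,x^a)=(st,x^a)$, and pass to the induced Darboux coordinates $(t,x^a,p_0,p_a)$ on $\sT^\ast P$, with $p_0$ conjugate to $t$. By the phase-lift formula (\ref{lifts}), computed in the example of Section \ref{secprinc}, the action $h^0=\sT^\ast h$ reads
\[
h^0_s(t,x^a,p_0,p_a)=(st,x^a,p_0,sp_a)\,,
\]
with Euler vector field $\zD_0=t\pa_t+p_a\pa_{p_a}$. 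Passing to the coordinates $(t,x^a,z,\mathfrak{p}_a)$ with $z:=p_0$ and $\mathfrak{p}_a:=p_a/t$, the action becomes $h^0_s(t,x^a,z,\mathfrak{p}_a)=(st,x^a,z,\mathfrak{p}_a)$, i.e. multiplication by $s$ on the nowhere-vanishing $t$-factor alone; hence $h^0$ is free, proper and locally trivial, so $\sT^\ast P$ is a principal $\R^\ti$-bundle over its orbit space, with quotient map $\zt\colon(t,x^a,p_0,p_a)\mapsto(z,x^a,\mathfrak{p}_a)$. It then remains only to identify this orbit space with $\sJ^1P^e$ and $\zt$ with the asserted fibration.

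For the identification I would show that the invariant coordinates $(z,x^a,\mathfrak{p}_a)$ obey precisely the jet-bundle transition rules (\ref{cchange1}) for the line bundle $P^e$. Writing the change of trivialization on $P$ as $x_j=\zf^i_j(x_i)$, $t_j=\gamma^i_j(x_i)\,t_i$ (the principal-bundle analogue of (\ref{cchange})), the cotangent transformation law gives $z_j=(\gamma^i_j)^{-1}z_i$, which is exactly the fibre transition of $P^e=(\Pe)^*$, since $\Pe$ carries the same transition $\gamma^i_j$ as $t$ and dualizing inverts it. A short computation, using $p_b^j/t_i=\gamma^i_j\,\mathfrak{p}_b^j$ together with the chain rule, then yields
\[
\mathfrak{p}^j_c=\Bigl((\gamma^i_j)^{-1}\mathfrak{p}^i_a+\tfrac{\pa(\gamma^i_j)^{-1}}{\pa x_i^a}\,z_i\Bigr)\tfrac{\pa x_i^a}{\pa x_j^c}\,,
\]
which is precisely (\ref{cchange1}) with $\zc^i_j=(\gamma^i_j)^{-1}$. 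Hence $\zt$ is globally well defined and realizes $\sT^\ast P\to\sJ^1P^e$ as a principal $\R^\ti$-bundle. Conceptually this is no surprise: every principal $\R^\ti$-bundle satisfies $P=((P^e)^*)^\ti$, so the statement is nothing but the case $L=P^e$ of the computation already carried out for $(L^*)^\ti$.

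The odd case is entirely parallel: replacing $\sT^\ast P$ by $\zP\sT^\ast P$ reverses the parity of the momenta $p_0,p_a$, so the invariant coordinate $z=p_0$ becomes odd and the same transition computation now matches the jet bundle $\sJ^1P^o$ of the odd line bundle $P^o=(\Po)^*$, with $\zt^\zP$ the resulting quotient map. The only point requiring genuine care, and hence the main obstacle, is the transition-function bookkeeping of the second paragraph: one must verify that the inhomogeneous term produced by the cotangent lift in the transformation of $\mathfrak{p}$ is exactly the derivative term of (\ref{cchange1}), and that the inversion $\zc^i_j=(\gamma^i_j)^{-1}$ correctly encodes the duality $P^e=(\Pe)^*$. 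Once this is settled, freeness, properness and local triviality of $h^0$ are immediate from the adapted coordinates.
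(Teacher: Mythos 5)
Your proposal is correct and follows essentially the paper's own route: the paper obtains this theorem from exactly the same local Darboux-coordinate computation of the phase lift (invariant functions $z=p_0$, $x^a$, $\mathfrak{p}_a=p_a/t$, with $h^0_s$ acting only by $t\mapsto st$), carried out for $\sT^\ast(L^\ast)^\ti$ and then transferred to an arbitrary principal $\R^\ti$-bundle via $P=((P^e)^\ast)^\ti$, which is precisely your closing observation. The only difference is that you make explicit the transition-function verification identifying the orbit space with $\sJ^1P^e$ (matching (\ref{cchange1}) with $\zc^i_j=(\gamma^i_j)^{-1}$), a step the paper compresses into ``it is easy to see''; your computation there is accurate.
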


The canonical symplectic structures: $\zw=\xd\left(t\cdot(\xd z-\mathfrak{p}_a\xd x^a)\right)$ on $C_L^\ti$
and $\zw_{(L^\ast)^\ti}=\xd z\xd t+\xd p_a\xd x^a$ on $\sT^\ast(L^\ast)^\ti$ are bi-homogeneous,
$(\wt{h}^i_u)_*\zw=u\zw$, $({h}^i_u)_*\zw_{(L^\ast)^\ti}=u\zw_{(L^\ast)^\ti}$, $i=0,1$. Recall that we refer
to such structures as to {linear symplectic principal $\R^\ti$-bundle structures} or linear contact
structures.

It is well known that linear symplectic structures are exactly those of cotangent bundles. Here, the map
$\Psi:C_L^\ti\ra\sT^\ast(L^\ast)^\ti$, written in homogeneous coordinates $(t,z,x^a,\mathfrak{p}_b)$, $t\ne
0$, in $C^\ti_L$ and Darboux coordinates $(t,x^a,z,p_b)$ in $\sT^\ast (L^\ast)^\ti$ as
$$(-t,z,x^a,\mathfrak{p}_b)\mapsto(t,x^a,z,t\mathfrak{p}_b)\,,$$
is a symplectomorphism identifying the canonical symplectic structures and relating the double bundle
structures: $(\Psi)_*(\wt{\zD}_i)=\zD_i$, $i=0,1$. In particular, we have the commutative diagram of the
corresponding fibrations
\be\label{FD}\xymatrix{
C_L^\ti\simeq\sT^\ast (L^\ast)^\ti\ar[rr]^{\zt_0} \ar[d]^{\zt_1} && \sJ^1L\ar[d]^{\bar{\zt_1}} \\
{(L^\ast)^\ti}\ar[rr]^{\bar{\zt_0}} && \cM }
\ee
together with the canonical embeddings $i_2:\sJ^1L\ra C_L^\ti\simeq\sT^\ast (L^\ast)^\ti$ and
$i_1:{(L^\ast)^\ti}\ra C_L^\ti\simeq\sT^\ast (L^\ast)^\ti$, so we can view $\zt_i$ as a projection onto the
submanifold and $\bar{\zt_i}$ as the restriction of $\zt_i$ to the submanifold, $i=0,1$. In the sequel we will
usually identify the above linear symplectic principal $\R^\ti$-bundles.

\medskip
It is clear that we could start as well with an odd line bundle $L$, constructing similarly $\sJ^1L$ and the
odd contact structure $C_L$ on $\sJ^1L$ as a line subbundle in $\zP\sT^\ast \sJ^1L$. In this case, instead of
the canonical Poisson bracket $\{\cdot,\cdot\}_\zw$ associated with the symplectic form
$\zw=\zw_{(L^\ast)^\ti}$ on $\sT^\ast(L^\ast)^\ti\simeq C_L^\ti$, we get the canonical Poisson bracket
$\{\cdot,\cdot\}_\zw^\zP$ associated with the symplectic form $\zw^\zP=\zw^\zP_{(L^\ast)^\ti}$ on
$\zP\sT^\ast(L^\ast)^\ti\simeq C_L^\ti$. If local coordinates are concerned, now $z$ is odd and the parity of
$p_a$ is the opposite to the parity of $x^a$. The diagram (\ref{FD}) takes now the form
\be\label{FD1}\xymatrix{ C_{L}^\ti\simeq\zP\sT^\ast (L^\ast)^\ti\ar[rr]^{\zt_0} \ar[d]^{\zt_1} &&
 \sJ^1L\ar[d]^{\bar{\zt_1}} \\
{(L^\ast)^\ti}\ar[rr]^{\bar{\zt_0}} && \cM }
\ee
Thus, we get the following.
\begin{theorem} Let $L$ be an even (resp., odd) line bundle over $\cM$, let $L^*$ be its dual, and let $(L^\ast)^\ti$
be the corresponding $\R^\ti$-principal bundle with the $\R^\ti$-action $h$. The canonical contact structure
$(C_L^\ti,\zw,\wt{h}^0,\wt{h}^1)$ is canonically isomorphic to $(\sT^\ast
(L^\ast)^\ti,\zw_{(L^\ast)^\ti},h^0,h^1)$ (resp., to $(\zP\sT^\ast
(L^\ast)^\ti,\zw_{(L^\ast)^\ti}^\zP,h^0,h^1)$), where $h^0=\sT^*h$. In simple words, canonical even (resp.,
odd) contact structures are exactly the (parity shifts of) the cotangent bundles of principal
$\R^\ti$-bundles.
\end{theorem}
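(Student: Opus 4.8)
The plan is to deduce the statement from the universal embedding of symplectic principal $\R^\ti$-bundles (Theorem \ref{t01}), which has the advantage of making the coordinate-independence of the identification automatic, so that I need not patch the explicit local map $\Psi$ across trivializations of $L$. I will carry out the even case; the odd case is obtained verbatim by replacing $\sT^\ast$ with $\zP\sT^\ast$, the canonical symplectic form with its odd counterpart $\zw^\zP_{(L^\ast)^\ti}$, and Theorem \ref{t01} with its odd version (embedding into $\zP\sT^\ast\sJ^1L$).

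First I would note that both sides are symplectic principal $\R^\ti$-bundles over the \emph{same} base $\sJ^1L$. For $C_L^\ti$ this is the definition, with $\R^\ti$-action $\wt{h}^0$ and Euler field $\wt\zD_0=t\pa_t$. For $\sT^\ast(L^\ast)^\ti$, Theorem \ref{plift0} applied to $P=(L^\ast)^\ti$ (whose associated even line bundle is $L^\ast$, so $P^e=L$) shows that the phase lift $h^0=\sT^\ast h$ turns $\zt\colon\sT^\ast(L^\ast)^\ti\ra\sJ^1L$ into a principal $\R^\ti$-bundle, while the canonical symplectic form $\zw_{(L^\ast)^\ti}$ is $1$-homogeneous for $h^0$, the canonical form on any cotangent bundle being homogeneous of degree $1$ for the phase lift.

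The key step is to compute the semi-basic one-form $\wt\za=i_{\zD_0}\zw_{(L^\ast)^\ti}$ that, by Theorem \ref{t01}, determines the line subbundle $C\subset\sT^\ast\sJ^1L$ with $\sT^\ast(L^\ast)^\ti\cong C^\ti$. In Darboux coordinates $(t,x^a,z,p_a)$, with $\zD_0=t\pa_t+p_a\pa_{p_a}$ and $\zw_{(L^\ast)^\ti}=\xd z\,\xd t+\xd p_a\,\xd x^a$,
\[
\wt\za=i_{\zD_0}\zw_{(L^\ast)^\ti}=-t\,\xd z+p_a\,\xd x^a=-t\left(\xd z-\mathfrak{p}_a\,\xd x^a\right),
\]
where $\mathfrak{p}_a=p_a/t$ are exactly the fibre coordinates on $\sJ^1L=\sT^\ast(L^\ast)^\ti/\R^\ti$. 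Hence $C$ is spanned by $\xd z-\mathfrak{p}_a\,\xd x^a$, that is $C=C_L$. Since $C_L^\ti$ is itself the bundle $C^\ti$ for $C=C_L$, the bijection of Theorem \ref{t01} produces a canonical symplectomorphism $\Psi\colon C_L^\ti\ra\sT^\ast(L^\ast)^\ti$ intertwining the symplectic forms and the principal actions $\wt{h}^0,h^0$; in coordinates it is the map recorded before the statement, the factor $-t$ being the normalization forced by the computation above.

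Finally I would check that $\Psi$ also intertwines the vector bundle homogeneity structures, $\Psi_\ast\wt\zD_1=\zD_1$, so that it is an isomorphism of \emph{linear} symplectic principal $\R^\ti$-bundles and not merely of principal $\R^\ti$-bundles; this is immediate from $\wt\zD_1=z\pa_z+\mathfrak{p}_a\pa_{\mathfrak{p}_a}$, $\zD_1=z\pa_z+p_a\pa_{p_a}$ and $p_a=t\mathfrak{p}_a$. I expect the only real difficulty to lie in carrying the two gradings simultaneously: Theorem \ref{t01} settles the $h^0$-direction and the well-definedness for free, but one must verify separately that the same map respects the $h^1$-direction, and in the odd case the parities of $z$ and $p_a$ must be tracked so that $\zw^\zP_{(L^\ast)^\ti}$ remains odd and the contraction signs come out correctly.
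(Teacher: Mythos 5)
Your proposal is correct, and its computational core coincides with the paper's: the identity $i_{\zD_0}\zw_{(L^\ast)^\ti}=-t\left(\xd z-\mathfrak{p}_a\,\xd x^a\right)$ is exactly what underlies the explicit map $\Psi\colon(-t,z,x^a,\mathfrak{p}_b)\mapsto(t,x^a,z,t\mathfrak{p}_b)$ that the paper writes down. The difference is in the packaging. The paper proceeds constructively: it exhibits $\Psi$ in homogeneous/Darboux coordinates, asserts it is a symplectomorphism, and checks $(\Psi)_\ast(\wt{\zD}_i)=\zD_i$ for $i=0,1$; the canonicity (independence of the local trivialization of $L$) is left implicit in the formula. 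You instead treat $\sT^\ast(L^\ast)^\ti$ as an abstract symplectic principal $\R^\ti$-bundle over $\sJ^1L$ (via Theorem \ref{plift0} and $1$-homogeneity of the canonical form under the phase lift) and invoke Theorem \ref{t01}, whose proof manufactures the isomorphism onto $C^\ti$ from the globally defined semi-basic form $i_{\zD_0}\zw$; the coordinate computation then only serves to recognize $C=C_L$. What your route buys is that well-definedness comes for free and the even/odd cases are formally parallel; what it loses is the explicit formula for $\Psi$, which the paper needs later (for the diagram (\ref{FD}), the embeddings $i_1,i_2$, and the identifications used in Sections 7--9). Your closing check that $\Psi$ intertwines the linear structures, $\Psi_\ast\wt{\zD}_1=\zD_1$, is a necessary supplement in your approach (Theorem \ref{t01} only controls the $h^0$-direction), and your coordinate verification of it is correct; since both homogeneity structures here are vector-bundle homotheties, intertwining the Euler fields indeed suffices.
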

Note also that if we start with a principal $\R^\ti$-bundle $P$ over $\cM$, then the diagrams (\ref{FD}) and
(\ref{FD1}) take the form
\be\label{FDa}\xymatrix{
\sT^\ast P\ar[rr]^{\zt_0} \ar[d]^{\zt_1} && \sJ^1P^e\ar[d]^{\bar{\zt_1}} \\
{P}\ar[rr]^{\bar{\zt_0}} && \cM }\qquad \xymatrix{
\zP\sT^\ast P\ar[rr]^{\zt_0} \ar[d]^{\zt_1} && \sJ^1P^o\ar[d]^{\bar{\zt_1}} \\
{P}\ar[rr]^{\bar{\zt_0}} && \cM }
\ee

\medskip\noindent  Note that with the above notation we have $\zP C_L^\ti=\zP\sT^\ast(L^\ast)^\ti$ if the line bundle $L$ is even, and $\zP C_L^\ti=\sT^\ast(L^\ast)^\ti$ if $L$ is odd. In other words, we can write $\zP C_L=C_{\zP L}$. Under this parity shift the canonical symplectic form $\zw$ on $C_L^\ti$ goes into the canonical symplectic form $\zw^\zP$ on $\zP C_L^\ti$ which is of the opposite parity. Note however that $(L^\ast)^\ti=(\zP L^\ast)^\ti$, so the principal $\R^\ti$-bundle being the base of the vector fibration remains unchanged.

\section{Brackets}
Before considering arbitrary contact structures, let us consider the trivial case of a contact form which,
however, can serve as a local model in general.

Let $\za$ be an even or odd contact one form on $\cM$ and let $\zw=\xd(t\cdot\za)$ be the corresponding
symplectic form on $\wt{\cM}=\R^\ti\ti\cM$. Let $\{\cdot,\cdot\}_\zw$ be the Poisson bracket associated with
the symplectic form $\zw$. Since $\zw$ is homogeneous of degree 1 with respect to the $\R^\ti$-action on
$\R^\ti\ti\cM$ (thus with respect to the fundamental vector field $\zD=t\pa_t$ of the action), the bracket is
homogeneous of degree -1, so for any $F,G\in C^\infty(\cM)$, we have
\be\label{PJ}\{ tF,tG\}_\zw=t\{ F,G\}_\za
\ee
for certain $\{ F,G\}_\za\in C^\infty(\cM)$. Here, of course, $(tF)(t,x)=tF(x)$. The bilinear operation on
$C^\infty(\cM)$, $(F,G)\mapsto\{ F,G\}_\za$, we call the {\it Legendre bracket} associated with the contact
form $\za$. The Legendre bracket is a particular example of a {\it Jacobi bracket} (cf. \cite{GM2}).

It is easy to see that the equation (\ref{PJ}) can be also taken as the definition of $\{\cdot,\cdot\}_\zw$
when $\{\cdot,\cdot\}_\za$ is given. This can be generalized to an arbitrary Jacobi bracket as follows (cf.
\cite{DLM,GIMPU}).
\begin{theorem}\label{tfun} There is a one-to-one correspondence between Jacobi brackets $\{\cdot,\cdot\}$ on $C^\infty(\cM)$
and homogeneous Poisson brackets $\{\cdot,\cdot\}_\cJ$ on $\wt{\cM}=\R^\ti\ti\cM$ of  degree -1, defined via
the identity
\be\label{PJ1}\{tF,tG\}_\cJ=t\{ F,G\}
\ee
valid for any $F,G\in C^\infty(\cM)$.
\end{theorem}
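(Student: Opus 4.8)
The plan is to build everything on the identification of $C^\infty(\cM)$ with the degree-$1$ part of the homogeneous algebra of $\wt{\cM}=\R^\ti\ti\cM$. Writing $\zD=t\pa_t$ for the Euler vector field of the $\R^\ti$-action, a function $\Phi$ is homogeneous of degree $1$ exactly when $\Phi(t,x)=tF(x)$, so $F\mapsto tF$ is a linear isomorphism of $C^\infty(\cM)$ onto $\cA^1(\wt{\cM})$. The one technical device driving the whole argument is that, since $t\ne 0$, for every chart $(x^a)$ on $\cM$ the functions $(t,tx^a)$ form a coordinate system on the associated chart of $\wt{\cM}$, and all of these coordinates are themselves of degree $1$.

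First I would settle the forward direction. If $\{\cdot,\cdot\}_\cJ$ is a Poisson bracket of degree $-1$, then degree counting gives $\{\cA^1,\cA^1\}_\cJ\subseteq\cA^1$ (the degrees combine as $1+1-1=1$), so (\ref{PJ1}) transports it to a well-defined bracket $\{\cdot,\cdot\}$ on $C^\infty(\cM)$. Bilinearity, super-anticommutativity (\ref{s1}) and the super-Jacobi identity (\ref{s2}) pass over unchanged, because $t$ is even (so $g(tF)=g(F)$ and no signs are disturbed) and because $\{\{tF,tG\}_\cJ,tH\}_\cJ=t\{\{F,G\},H\}$. The only axiom that needs work is the generalized super-Leibniz rule (\ref{s3}), where the correction term $-\{F,1\}GH$ must be manufactured. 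I would first expand $\{tF,tH\}_\cJ=t\{F,H\}$ through the genuine Leibniz rule of $\{\cdot,\cdot\}_\cJ$, writing $tH=t\cdot H$ with $H$ of degree $0$ and using $\{tF,t\}_\cJ=t\{F,1\}$, to obtain the identity $\{tF,H\}_\cJ=\{F,H\}-\{F,1\}H$. Substituting this into the Leibniz expansion of $\{tF,(tG)\cdot H\}_\cJ=t\{F,GH\}$ and dividing by $t$ reproduces (\ref{s3}) exactly, the surviving remainder collapsing to $-\{F,1\}GH$ once the parities are collected.

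Next comes bijectivity. Injectivity is immediate from the coordinate device: a degree-$(-1)$ Poisson bracket is a biderivation, hence determined by its values $\{t,t\}_\cJ=0$, $\{t,tx^a\}_\cJ=t\{1,x^a\}$, $\{tx^a,tx^b\}_\cJ=t\{x^a,x^b\}$ on the coordinates $(t,tx^a)$, and these are read straight off the induced Jacobi bracket. For existence I would invoke the structure theorem: in the even case (\ref{jac}) presents the bracket as $\{F,G\}=\zL(\D F,\D G)+FE(G)-GE(F)$ for a global bivector $\zL$ and a global vector field $E=\{1,\cdot\}$ on $\cM$. Then I would set $\cJ=\frac1t\,\zL+\pa_t\we E$ on $\wt{\cM}$; this is a globally defined bivector, manifestly homogeneous of degree $-1$, it annihilates constants, and a one-line contraction of $\D(tF)=F\D t+t\D F$ against $\cJ$ yields $\{tF,tG\}_\cJ=t\{F,G\}$ for all $F,G$.

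The hard part will be checking that this $\cJ$ is actually Poisson, i.e. that the Schouten square $[\cJ,\cJ]$ vanishes. I would dispatch this by tensoriality: $[\cJ,\cJ]$ is a trivector, hence determined by its values on a coframe, and $\D t,\D(tx^a)$ is a coframe because $(t,tx^a)$ are coordinates. Evaluating $[\cJ,\cJ]$ on the differentials of degree-$1$ functions returns, up to the usual constant, the Jacobiator $\{tF,\{tG,tH\}_\cJ\}_\cJ+\mathrm{cyclic}=t\bigl(\{F,\{G,H\}\}+\mathrm{cyclic}\bigr)$, which vanishes because $\{\cdot,\cdot\}$ obeys (\ref{s2}) on $C^\infty(\cM)$, in particular on the triples drawn from $\{1,x^a\}$. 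Hence $[\cJ,\cJ]=0$, so $\cJ$ is a degree-$(-1)$ Poisson bracket inducing the prescribed Jacobi bracket, and together with injectivity this closes the correspondence. The odd case is entirely parallel: an odd contact form yields an odd symplectic form on $\R^\ti\ti\cM$ and hence an odd ($k=1$) Poisson bracket, and the same computations go through with the $k=1$ sign conventions of (\ref{s0})--(\ref{s3}), the extra odd-function term of (\ref{jac}) contributing a matching term to $\cJ$.
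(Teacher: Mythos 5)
Your proof is correct, and it is worth noting that the paper itself gives essentially no proof of this theorem: it states it with a reference to the literature (cf.\ \cite{DLM,GIMPU}) and then only records the poissonization formula (\ref{jac1}), ${\cJ}=\frac{1}{t}\zL+\zG\cdot\pa_t+tf\pa_t\cdot\pa_t$, asserting without argument that it is a degree $-1$ Poisson tensor. Your existence construction $\cJ=\frac1t\zL+\pa_t\we E$ is exactly this formula in the even case (with $E=\zG$, $f=0$), so there you follow the same route as the paper; what you add, and what the paper delegates to references, are the actual verifications. Your key device --- that $(t,tx^a)$ is a coordinate system on $\R^\ti\ti U$ made entirely of degree-$1$ functions --- does not appear in the paper and does real work twice: it gives injectivity (a degree $-(1)$ biderivation is pinned down by its values on such coordinates, and those values are read off the induced bracket), and it reduces the Poisson condition $[\cJ,\cJ]=0$ to the super-Jacobi identity (\ref{s2}) for the Jacobi bracket, since the Schouten square is tensorial and its components in this coframe are Jacobiators of degree-$1$ functions. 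Likewise your derivation of the generalized Leibniz rule (\ref{s3}) from the auxiliary identity $\{tF,H\}_\cJ=\{F,H\}-\{F,1\}H$ cleanly explains the origin of the correction term $-\{\zf,1\}\zg\zc$. Two small caveats: the displayed value $\{t,t\}_\cJ=0$ is correct only in the even case; in the odd case $\{t,t\}_\cJ=t\{1,1\}$, which need not vanish (it is precisely where the $f$-term of (\ref{jac}) hides), though injectivity is unaffected because this value is still determined by the induced bracket. Correspondingly, ``entirely parallel'' for the odd case should be read as: the same arguments with the $k=1$ signs and with the extra summand $tf\,\pa_t\cdot\pa_t$ in $\cJ$ --- which you do acknowledge, and which matches the paper's (\ref{jac1}).
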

The bracket $\{\cdot,\cdot\}_\cJ$ is usually called the {\it poissonization} of $\{\cdot,\cdot\}$. Under the
poissonization, the tensor (\ref{jac}) on $\cM$ goes into the bivector field
\be\label{jac1} {\cJ}=\frac{1}{t}\zL+\zG\cdot\pa_t+tf\pa_t\cdot\pa_t
\ee
on $\wt{\cM}$ which is homogeneous of degree -1. This bivector field is an even (resp., odd) {\it Poisson
tensor} in the sense that $\{{\cJ},{\cJ}\}^{\zP}_{\wt{\cM}}=0$ in the even case and
$\{{\cJ},{\cJ}\}_{\wt{\cM}}=0$ in the odd case. The even (resp., odd) Poisson tensor $\cJ$ represents a
quadratics function on $\zP\sT^\ast\wt{\cM}$ (resp., $\sT^\ast\wt{\cM}$). Note that in the even case
$\pa_t\cdot\pa_t$ is automatically 0. The properties of even and odd Poisson bracket immediately imply the
following.

\begin{theorem} The tensor $\zL+\zG\cdot I$, where $\zL$ and $\zG$ are even elements from $\cX(\cM)$ of degrees,
respectively, 2 and 1, defines an even Jacobi structure  on $\cM$ if and only if
\be\label {cond1}\{\zL,\zL\}^{\zP}_{{\cM}}=2\zG\cdot\zL\,,\quad
\{\zG,\zL\}^{\zP}_{{\cM}}=0\,.
\ee
The tensor $\zL+\zG\cdot I+fI\cdot I$, where $\zL$, $\zG$, and $f$ are odd elements from $\A(\sT^*\cM)$ of
degrees, respectively, 2, 1, and 0, defines an odd Jacobi structure  on $\cM$ if and only if
\be\label {cond2}
\{\zL,\zL\}_{{\cM}}=2\zG\cdot\zL\,,\  \{\zG,\zL\}_{{\cM}}=2f\zL\,, \ \{\zG,\zG\}_{{\cM}}=2(f\zG-\{
f,\zL\}_{{\cM}})\,,\ \zG(f)=0 \,.
\ee
Here $\{{\cdot},{\cdot}\}^{\zP}_{{\cM}}$  (resp., $\{{\cdot},{\cdot}\}_{{\cM}}$) is the Schouten (resp.,
symmetric Schouten) bracket on $\cM$.
\end{theorem}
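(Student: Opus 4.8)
The plan is to reduce the whole statement to the single equation expressing that the poissonization $\cJ$ of the bi-differential operator (\ref{jac}), written in the form (\ref{jac1}), is a Poisson tensor on $\wt{\cM}=\R^\ti\ti\cM$. By Theorem \ref{tfun} together with the discussion following Theorem \ref{derived1}, the operator $\zL+\zG\cdot I$ (resp.\ $\zL+\zG\cdot I+fI\cdot I$) is a Jacobi bracket if and only if its poissonization satisfies $\{\cJ,\cJ\}^\zP_{\wt\cM}=0$ in the even case (resp.\ $\{\cJ,\cJ\}_{\wt\cM}=0$ in the odd case). Thus both halves of the theorem follow once this self-bracket is computed explicitly and decomposed into homogeneous components.

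The computation is organized by two facts. First, $\zL$, $\zG$, $f$ are pulled back from $\cM$, hence independent of $t$ and of $\pa_t$, so the only nontrivial mixed contributions arise from $\{\pa_t,\tfrac1t\}=\pa_t(\tfrac1t)=-\tfrac1{t^2}$, while Schouten brackets of multivectors living on $\cM$ reduce to the corresponding brackets on $\cM$. Second, I would expand $\{\cJ,\cJ\}$ by bilinearity and the graded Leibniz rule and then sort the terms by their degree in $\pa_t$, since each homogeneous component must vanish separately. In the even case $\cJ=\tfrac1t\zL+\zG\cdot\pa_t$, and because the relevant bracket is the skew Schouten bracket one has $\pa_t\cdot\pa_t=0$, so only two components survive: the $\pa_t$-degree-one part equals $\frac2t\{\zL,\zG\}^\zP_\cM\cdot\pa_t$, giving $\{\zG,\zL\}^\zP_\cM=0$, while the $\pa_t$-degree-zero part combines $\frac1{t^2}\{\zL,\zL\}^\zP_\cM$ with the term produced when $\pa_t$ differentiates $\tfrac1t$ in the cross bracket, yielding $\{\zL,\zL\}^\zP_\cM=2\zG\cdot\zL$. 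These are exactly (\ref{cond1}).

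In the odd case $\cJ=\tfrac1t\zL+\zG\cdot\pa_t+tf\,\pa_t\cdot\pa_t$, and the relevant bracket is now the symmetric Schouten bracket, for which $\pa_t\cdot\pa_t\neq 0$ survives; this surviving term is precisely what generates the extra conditions. Sorting $\{\cJ,\cJ\}_{\wt\cM}$ by powers of $p_t$ now produces four components, of degrees $0,1,2,3$. The degree-zero component reproduces $\{\zL,\zL\}_\cM=2\zG\cdot\zL$; the degree-one component, which mixes $\{\zL,\zG\}_\cM$ with the term coming from differentiating $\tfrac1t$ in the cross bracket of $\tfrac1t\zL$ with $tf\,\pa_t\cdot\pa_t$, gives $\{\zG,\zL\}_\cM=2f\zL$; the degree-two component collects $\{\zL,f\}_\cM$, $\{\zG,\zG\}_\cM$ and an $f\zG$ term into $\{\zG,\zG\}_\cM=2(f\zG-\{f,\zL\}_\cM)$; and the degree-three component equals $2t\,\{\zG,f\}_\cM\cdot p_t^3$ with $\{\zG,f\}_\cM=\zG(f)$, forcing $\zG(f)=0$. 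These are exactly (\ref{cond2}).

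The main obstacle I anticipate is purely the bookkeeping of super-signs: the parities of $\zL,\zG,f$ (all odd in the second case), the graded (anti)symmetry and Leibniz rule of the two Schouten brackets, and the parity-forced identities $\{\zL,f\}_\cM=\{f,\zL\}_\cM$ and $\{\zL,\zG\}_\cM=\{\zG,\zL\}_\cM$ must all be tracked so that the factors of $2$ and the signs in (\ref{cond1})--(\ref{cond2}) come out correctly. The conceptual point, by contrast, requires no computation: the distinction between the skew and the symmetric Schouten bracket — equivalently, whether $\pa_t\cdot\pa_t$ vanishes — is exactly what makes the even case carry two conditions and the odd case four.
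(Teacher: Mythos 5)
Your proposal is correct and takes essentially the same route as the paper: the paper likewise reduces the theorem to the vanishing of the self-bracket $\{\cJ,\cJ\}$ of the poissonization (\ref{jac1}) (via the correspondence of Theorem \ref{tfun}), noting that $\pa_t\cdot\pa_t=0$ in the even case, and then simply asserts that "the properties of even and odd Poisson brackets immediately imply" the conditions. Your explicit expansion of $\{\cJ,\cJ\}$ sorted by $\pa_t$-degree --- two components in the even case giving (\ref{cond1}), four components of degrees $0,1,2,3$ in the odd case giving (\ref{cond2}) --- is exactly the computation the paper leaves implicit, and your bookkeeping of where each cross term (in particular the contractions of $\pa_t$ with $\tfrac{1}{t}$ and with $t$) lands is accurate.
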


\medskip\noindent
Note that the conditions (\ref{cond2}) reduce to that considered in \cite{Bru} for $f=0$.
\begin{example} The triple
$$\left(\zL=\zvy\cdot\pa_x\cdot\pa_x\,,\zG=\zvy\cdot\pa_x\,,f=\zvy\right)
$$
on $\R^{1\mid 1}$ (with the even coordinate $x$ and the odd coordinate $\zvy$) represents an odd Jacobi
structure.
\end{example}

We call a Jacobi bracket {\it non-degenerate} if its poissonization is symplectic, so that the bivector field
${\cJ}$ is invertible. As the poissonization is homogeneous of degree -1 with respect to $\zD=t\pa_t$, the
corresponding symplectic structure $\zw$ is 1-homogeneous and makes $\wt{\cM}$ into a symplectic
$\R^\ti$-principal bundle. Thus $\zw=\xd(t\cdot\za)$ for a contact form $\za$ and the Jacobi bracket is the
Legendre bracket associated with this contact form. In other words, non-degenerate Jacobi brackets are
Legendre brackets.

\bigskip
The Poisson bracket $\{\cdot,\cdot\}_\zw$ associated with the symplectic form $\zw=\xd(t\za)$, $t\ne 0$, where
$\za$ is the even contact form (\ref{lfe}), is determined by the following commutation rules of coordinate
functions (lacking commutations rules not following from the antisymmetry are, by convention, trivially 0):
\be\label{lfe1}\{
z,t\}_\zw=1,\quad\{p_a,z\}_\zw=\frac{p_a}{t},\quad\{\zvy^j,z\}_\zw=\frac{\zvy^j}{t},
\quad\{p_a,x^a\}_\zw=\frac{1}{t},\quad\{\zvy^j,\zvy^j\}_\zw=\frac{\epsilon_j}{t}\,,\ee so they are associated
with the Poisson tensor ${\cJ}$,
\be\label{lfe2}{\cJ}=\pa_t\pa_z+\frac{1}{t}\left[\pa_z\left(p_a\pa_{p_a}+\zvy^j\pa_{\zvy^j}\right)+
\pa_{x^a}\pa_{p_a}-\frac{\epsilon_j}{2}\pa_{\zvy^j}\pa_{\zvy^j}\right]\,,
\ee
viewed as a quadratic function on $\zP\sT^\ast\wt{\cM}$, as the derived bracket of the Schouten bracket
$\{\cdot,\cdot\}_{\wt{\cM}}^{\zP}$ on $\zP\sT^\ast\wt{\cM}$,
\be\label{lfe11}\{\zf,\zc\}_\zw=\{\{\zf,{\cJ}\}_{\wt{\cM}}^{\zP},\zc\}_{\wt{\cM}}^{\zP}\,.\ee
Note that the bracket is even so, for homogeneous $\zf,\zc$, we have
$$\{\zf,\zc\}_\zw=-(-1)^{g(\zf)g(\zc)}\{\zc,\zf\}_\zw\,,$$ and the corresponding Legendre
bracket is an even (graded) Jacobi bracket and reads
\bea\nn \{ F,G\}_\za &=&\frac{\pa F}{\pa{p_a}}p_a\frac{\pa G}{\pa z}-\frac{\pa F}{\pa z}p_a\frac{\pa G}{\pa{p_a}}
+ \frac{\pa F}{\pa{\zvy^j}}\zvy^j\frac{\pa G}{\pa z}-\frac{\pa F}{\pa z}\zvy^j\frac{\pa G}{\pa{\zvy^j}}\\
&&+\epsilon_j\frac{\pa F}{\pa \zvy^j}\frac{\pa G}{\pa \zvy^j}+\frac{\pa F}{\pa{p_a}}\frac{\pa G}{\pa
x^a}-\frac{\pa F}{\pa x^a}\frac{\pa G}{\pa{p_a}} +\frac{\pa F}{\pa z}\cdot G -F\cdot\frac{\pa G}{\pa
z}\,.\label{lfe3}
\eea

\medskip
For the odd contact form (\ref{lfo1}), the symplectic Poisson bracket is determined by
\be\label{lfo1}\{ t,\zx\}_\zw=1,\quad\{\zvy^a,\zx\}_\zw=-\frac{\zvy^a}{t},
\quad\{x^a,\zvy^a\}_\zw=-\frac{1}{t}\,.\ee It is associated with the Poisson tensor
\be\label{lfeo2}{\cJ}=-\pa_t\pa_\zx+\frac{1}{t}\left[\pa_{x^a}\pa_{\zvy^a}-
\pa_{\zvy^a}\zvy^a\pa_{\zx}\right]\,,\ee viewed as a quadratic function on $\sT^\ast\wt{\cM}$, as the derived
bracket of the Poisson symplectic bracket $\{\cdot,\cdot\}_{\wt{\cM}}$ on $\sT^\ast\wt{\cM}$,
\be\label{lf011}\{\zf,\zc\}_\zw=\{\{\zf,{\cJ}\}_{\wt{\cM}},\zc\}_{\wt{\cM}}\,.\ee
Note that the bracket is odd so, for homogeneous $\zf,\zc$, we have
$$\{\zf,\zc\}_\zw=-(-1)^{(g(\zf)+1)(g(\zc)+1)}\{\zc,\zf\}_\zw\,,$$ and the corresponding Legendre bracket is an odd
Jacobi bracket and reads
\bea\nn \{ F,G\}_\za &=&-\frac{\pa F}{\pa{x^a}}\frac{\pa G}{\pa \zvy^a}+
\frac{\pa F}{\pa{\zvy^a}}\zvy^a\frac{\pa G}{\pa\zx}
+F\cdot\frac{\pa G}{\pa\zx}\\
&&-(-1)^{g(F)}\left(-\frac{\pa F}{\pa\zvy^a}\frac{\pa G}{\pa{x^a}}+ \frac{\pa F}{\pa\zx}\zvy^a\frac{\pa
G}{\pa{\zvy^a}}+\frac{\pa F}{\pa\zx}\cdot G\right)\,.\label{lfo3}
\eea

\medskip
If $P$ is a principal  $\R^\ti$-bundle over $\cM$, then we have in general no trivialization $P=\R^\ti\ti\cM$,
so that formula (\ref{PJ1}) makes sense only locally. On the other hand, there is an obvious canonical
identification of 1-homogeneous functions on $P$ with sections of the even line bundle $P^e$ (or the odd line
bundle $P^o$). Conversely, this identification identifies sections $\zs$ of an even (resp., odd) line bundle
$L$ over $\cM$ with 1-homogeneous functions $\zi_\zs$ on $P=(L^*)^\ti$. Now, a generalization of Theorem
\ref{tfun} takes the following form.
\begin{theorem}\label{th1}
Let $L$ be an even line bundle. There is a one-to-one correspondence between even (resp., odd) Kirillov
brackets $[\cdot,\cdot]_L$ on $L$ and even (resp., odd) Poisson tensors $\cJ$ of degree -1 on $(L^\ast)^\ti$
given by the formula
\be\label{le}\{\zi_\zs,\zi_{\zs'}\}_\cJ=\zi_{[\zs,\zs']_L}\,.\ee
\end{theorem}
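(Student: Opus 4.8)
The plan is to deduce the global correspondence from the local one of Theorem~\ref{tfun}, using the identification $\Sec(L)\cong\cA^1(P)$, $\zs\mapsto\zi_\zs$, for $P=(L^\ast)^\ti$; the even and odd cases run in parallel, the parity being tracked separately from the $\R^\ti$-weight.

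First I would treat the passage from a degree $-1$ Poisson tensor $\cJ$ to a Kirillov bracket. Since $\cJ$ has weight $-1$, the associated bracket sends $\cA^i(P)\ti\cA^j(P)$ into $\cA^{i+j-1}(P)$, so $\cA^1(P)$ is closed under $\{\cdot,\cdot\}_\cJ$. Transporting this bracket through $\zs\mapsto\zi_\zs$, formula (\ref{le}) defines a bilinear operation $[\cdot,\cdot]_L$ on $\Sec(L)$ which is (super)antisymmetric and satisfies the super-Jacobi identity because $\{\cdot,\cdot\}_\cJ$ does. To see that it is first-order in each argument I would argue locally: over a trivialising chart $U_i$ one has $P|_{U_i}\simeq\R^\ti\ti U_i$, and the restriction of $\cJ$ is a homogeneous degree $-1$ Poisson bracket there; by Theorem~\ref{tfun} it is the poissonization of a Jacobi bracket $\{\cdot,\cdot\}^{(i)}$ on $C^\infty(U_i)$. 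Comparing (\ref{PJ1}) with (\ref{le}) shows that $[\cdot,\cdot]_L$ read in this trivialisation is exactly $\{\cdot,\cdot\}^{(i)}$, which is first-order; hence $[\cdot,\cdot]_L$ is a Kirillov bracket of the same parity as $\cJ$.

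For the converse I would glue local data. Given a Kirillov bracket $[\cdot,\cdot]_L$, in each chart $U_i$ it reads as a Jacobi bracket $\{\cdot,\cdot\}^{(i)}$ (Definition~\ref{de1}), whose poissonization provides, by Theorem~\ref{tfun}, a degree $-1$ Poisson tensor $\cJ^{(i)}$ on $P|_{U_i}$ obeying $\{\zi_\zs,\zi_{\zs'}\}_{\cJ^{(i)}}=\zi_{[\zs,\zs']_L}$ for all local sections $\zs,\zs'$. The decisive point is that this defining relation is intrinsic and pins down the bivector uniquely: in homogeneous coordinates $(t,x^a)$ one has $t=\zi_{\zs_0}$ and $t\,x^a=\zi_{\zs_a}$ for the local frame $\zs_0$ and the sections $\zs_a$ with components $x^a$, so from $\xd t$ and $\xd(t x^a)$ one recovers $\xd t$ and $\xd x^a$ (here $t\ne 0$). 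Thus the differentials $\xd\zi_\zs$ generate the cotangent module $\zW^1(P)$ over $C^\infty(P)$, and a weight $-1$ bivector, being a $C^\infty(P)$-bilinear form on $\zW^1(P)$, is determined by its values on such pairs. Consequently $\cJ^{(i)}$ and $\cJ^{(j)}$ satisfy the same relation and agree on $P|_{U_i\cap U_j}$, so they glue to a global degree $-1$ Poisson tensor $\cJ$ with $\{\zi_\zs,\zi_{\zs'}\}_\cJ=\zi_{[\zs,\zs']_L}$.

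It remains to note that the two constructions are mutually inverse. The uniqueness just exploited shows that a degree $-1$ Poisson tensor satisfying (\ref{le}) is unique, so starting from $\cJ$ and building $[\cdot,\cdot]_L$ returns $\cJ$ upon regluing; conversely (\ref{le}) reads the original Kirillov bracket off the constructed $\cJ$. I expect the genuine obstacle to lie in this gluing step: the local Jacobi brackets depend on the trivialisation and transform through the line-bundle cocycle $\zc^i_j$ of (\ref{cchange}), so the transparency of the argument rests entirely on the intrinsic relation (\ref{le}) together with the spanning property of the differentials $\xd\zi_\zs$, which converts compatibility on overlaps into a uniqueness statement rather than a direct cocycle computation.
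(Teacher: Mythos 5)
Your proof is correct and follows exactly the route the paper intends: Theorem \ref{th1} is stated there without a separate proof, as a direct globalization of Theorem \ref{tfun} via the identification $\zs\mapsto\zi_\zs$ of sections of $L$ with 1-homogeneous functions on $(L^\ast)^\ti$. Your local reduction to Theorem \ref{tfun}, together with the observation that the differentials $\xd\zi_\zs$ span the module of one-forms on $(L^\ast)^\ti$ (so that formula (\ref{le}) determines the bivector uniquely and the local poissonizations glue), supplies precisely the details the paper leaves implicit.
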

In particular, any even or odd contact structure $C$ defines a Kirillov bracket $[\cdot,\cdot]_{C^*}$ on
sections of the dual line bundle $C^*$.
\begin{definition} Let $C$ be a contact structure. The Kirillov bracket $[\cdot,\cdot]_{C^*}$ on
sections of the dual line bundle $C^*$ will be called the {\it Legendre bracket} associated with the contact
structure $C$. \end{definition} The Legendre bracket is nondegenerate in the sense that it is represented by a
symplectic Poisson bracket on $C^\ti$. Conversely, if we have a nondegenerate Kirillov bracket on a line
bundle $L$, then the corresponding Poisson structure $\cJ$ on $(L^\ast)^\ti$ is symplectic and homogeneous,
thus represents the contact structure $C_L$. This gives an identification of nondegenerate Kirillov brackets
with Legendre brackets.

As usual, we can interpret an even Poisson tensors $\cJ$ on the principal $\R^\ti$-bundle $P=(L^\ast)^\ti$ as
quadratic Hamilonians on $\zP\sT^\ast P=\zP C_L^\ti$  satisfying the homological condition
\be\label{Shc}\{ \cJ,\cJ\}_{P}^\zP=0\,,
\ee
where  $\{ \cdot,\cdot\}_{P}^\zP$ is the Schouten  bracket on $P$. As the Schouten bracket is of degree $-1$,
it is closed on homogeneous functions of degree 1, i.e.  functions from $\A^{(1,\bullet)}(\zP\sT^* P)$  if we
consider the standard bi-gradation on $\zP\sT^* P$. In the introduced terminology, the Schouten bracket $\{
\cdot,\cdot\}_{P}^\zP$ reduced to $\A^{(1,\bullet)}(\zP\sT^*P)$ is the Legendre bracket
$[\cdot,\cdot]_{(\zP\sT^*P)^e}$ on sections of the line bundle $(\zP\sT^*P)^e$. In the case of an odd Poisson
tensor, the situation is analogous and we replace the Schouten bracket with the symmetric Schouten bracket. If
$P=(L^*)^\ti$, then, according to our conventions, we can write the Legendre bracket  as $[\cdot,\cdot]_{\zP
C_L^*}$ in the even as well as in the odd case.

Using our standard local coordinates $(t,z,x^a,\mathfrak{p}_a)$ on $\zP C_L^\ti$, with the bi-degrees $(1,0)$,
$(0,1)$, $(0,0)$, and $(0,1)$, respectively, we can write locally any Poisson bracket of degree -1 as derived
from a quadratic Hamiltonian of degree 1
$$\cJ=t(f_{ab}(x)\mathfrak{p}_a\mathfrak{p}_b+h_a(x)\mathfrak{p}_az+u(x)z^2)\,.
$$
Note that we reversed the parity, so $z$ is even if and only if $L$ is odd, and $\mathfrak{p}_a$ and $x^a$
have different parities. Therefore the term $u(x)z^2$ is non-zero only for even $L$. In the Darboux
coordinates $(t,z,x^a,{p}_a)$, $p_a=t\mathfrak{p}_a$ we get
$$\cJ=\frac{1}{t}f_{ab}(x){p}_a{p}_b+h_a(x){p_a}z+tf(x)z^2\,.
$$
Since the Hamiltonian vector field of $z$ is $\pa_t$, this coincides with formula (\ref{jac1}) in which the
bivector field $\zL$ represented by $f_{ab}(x){p}_a{p}_b$ and the vector field $\zG$ represented by
$h_a(x){p_a}$. This coincides also with (\ref{jac}) if we take into account that on 1-homogeneous functions
$t\pa_t$ acts as the identity.

Completely analogously to the classical result describing Poisson brackets as derived brackets for Poisson
tensors viewed as quadratic Hamiltonians with respect to the Schouten bracket and to a similar result for
Jacobi brackets \cite{GM1,GM2}, we can now formulate the following description of Kirillov brackets.
\begin{theorem}\label{t-main1}
There is a one-to-one correspondence between Kirillov brackets $[\cdot,\cdot]_L$ on a line bundle $L$ and
1-homogeneous quadratic Hamiltonians  $\cJ\in\A^{(1,2)}(\zP C_L^\ti)$ satisfying the homological condition $[
\cJ,\cJ]_{\zP C_L^*}=0$ with respect to the Legendre bracket on sections of $\zP C_L^*$. The Kirillov bracket
is in this case given as the derived bracket
$$[\zs,\zs']_L=[[\zs,\cJ]_{\zP C_L^*},\zs']_{\zP C_L^*}\quad
$$
of the Legendre bracket $[ \cdot,\cdot]_{\zP C_L^*}$ with respect to $\cJ$.
\end{theorem}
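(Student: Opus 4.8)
The plan is to obtain the statement by chaining together the two correspondences already in hand and then translating everything into the Legendre-bracket language on $\zP C_L^\ti$. First I would invoke Theorem~\ref{th1}, which says that even (resp.\ odd) Kirillov brackets $[\cdot,\cdot]_L$ on $L$ are in bijection with even (resp.\ odd) Poisson tensors $\cJ$ of degree $-1$ on the principal $\R^\ti$-bundle $P=(L^*)^\ti$, the bijection being pinned down by (\ref{le}): $\{\zi_\zs,\zi_{\zs'}\}_\cJ=\zi_{[\zs,\zs']_L}$, where $\zi_\zs$ is the $1$-homogeneous function on $P$ representing a section $\zs$. Thus only two things remain: to recast ``Poisson tensor of degree $-1$ on $P$'' as ``homological quadratic Hamiltonian of bidegree $(1,2)$ on $\zP C_L^\ti$'', and to check that the induced bracket is the asserted derived bracket.

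For the recasting I would apply Theorem~\ref{derived1}: an even (resp.\ odd) Poisson tensor $\cJ$ on $P$ is the same as a quadratic Hamiltonian in $\A(\zP\sT^*P)$ (resp.\ $\A(\sT^*P)$) satisfying the homological condition $\{\cJ,\cJ\}_P^\zP=0$ (resp.\ $\{\cJ,\cJ\}_P=0$), and the Poisson bracket it generates is the derived bracket $\{F,G\}_\cJ=\{\{F,\cJ\}_P^\zP,G\}_P^\zP$. Since $P=(L^*)^\ti$, the identification noted earlier lets us replace the ambient manifold $\zP\sT^*P$ (even case) or $\sT^*P$ (odd case) by $\zP C_L^\ti$ in both cases, so $\cJ$ lives on $\zP C_L^\ti$; its bidegree $(1,2)$ I read off the explicit local form $\cJ=t\big(f_{ab}\mathfrak{p}_a\mathfrak{p}_b+h_a\mathfrak{p}_a z+u z^2\big)$ worked out just above the statement, the single factor of $t$ (bidegree $(1,0)$) encoding the $\R^\ti$-homogeneity $-1$ and the quadratic dependence on the linear fibre coordinates $(z,\mathfrak{p}_a)$ giving the second degree $2$. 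The decisive identification is the one recorded in the paragraphs preceding the statement: the Schouten (resp.\ symmetric Schouten) bracket $\{\cdot,\cdot\}_P^\zP$, being of degree $-1$, is closed on $\A^{(1,\bullet)}(\zP C_L^\ti)$ and there agrees with the Legendre bracket $[\cdot,\cdot]_{\zP C_L^*}$ on sections of $\zP C_L^*$. Under this identification the homological condition reads $[\cJ,\cJ]_{\zP C_L^*}=0$, and, since each $\zi_\zs\in\A^{(1,0)}$ is a section of $\zP C_L^*$, restricting the derived-bracket formula of Theorem~\ref{derived1} to such functions and using (\ref{le}) gives
$$\zi_{[\zs,\zs']_L}=\{\{\zi_\zs,\cJ\}_P^\zP,\zi_{\zs'}\}_P^\zP=[[\zs,\cJ]_{\zP C_L^*},\zs']_{\zP C_L^*}\,,$$
which is exactly the claimed formula; bijectivity follows because each arrow in the chain is a bijection.

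The step demanding genuine care---the main obstacle---is the simultaneous bookkeeping of the two gradings and the parity shift. One must verify that the $\A^{(1,\bullet)}$-grading matches up across the identifications so that $\A^{(1,0)}$ really recovers $\Sec(L)$ while $\A^{(1,2)}$ houses $\cJ$, that the bracket produced lands back in $\A^{(1,0)}$, and that the even/odd dichotomy (Schouten versus symmetric Schouten, and the parity of the Legendre bracket on $\zP C_L^*$) is consistent with $\zP C_L=C_{\zP L}$ and with $(L^*)^\ti=(\zP L^*)^\ti$. Once these degree counts are pinned down, the bijection and the derived-bracket description follow formally from Theorems~\ref{th1} and~\ref{derived1} together with the reduction of the Schouten bracket to the Legendre bracket.
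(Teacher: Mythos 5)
Your proposal is correct and follows essentially the same route as the paper, which presents this theorem without a separate proof, as the direct assembly of Theorem \ref{th1}, Theorem \ref{derived1}, and the identification (spelled out in the paragraphs immediately preceding the statement) of the Schouten, respectively symmetric Schouten, bracket restricted to $\A^{(1,\bullet)}(\zP C_L^\ti)$ with the Legendre bracket $[\cdot,\cdot]_{\zP C_L^*}$. Your degree and parity bookkeeping --- the bidegree $(1,2)$ read off from the local form $\cJ=t(f_{ab}\mathfrak{p}_a\mathfrak{p}_b+h_a\mathfrak{p}_az+u z^2)$, the closure of the derived bracket on $\A^{(1,0)}$, and the even/odd dichotomy handled via $\zP C_L=C_{\zP L}$ and $(L^*)^\ti=(\zP L^*)^\ti$ --- is exactly the content the paper relies on.
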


\section{Kirillov algebroids}
It is well known that linear Poisson structures represent Lie algebroids. Our general philosophy suggests to
consider linear principal Poisson $\R^\ti$-bundles and the corresponding `algebroids'.
\begin{definition} A {\it Kirillov algebroid} is a linear principal Poisson $\R^\ti$-bundle. If this principal $\R^\ti$-bundle is trivial, we will speak about a {\it Jacobi algebroid}.
\end{definition}
To look closer on such structures, consider a linear principal Poisson $\R^\ti$-bundle over $P_0$ and the
corresponding fibrations $\zt_i:P\to P_i$, $i=0,1$. Let us denote with $\A^{(k,\,l)}(P)$, $k\in\Z$, $l\in\N$,
the space of those smooth functions on $P$ which are homogeneous of degree $k$ with respect to $h^0$ and
homogeneous of degree $l$ with respect to $h^1$ (or $\zD_1$). We will say  simply that they are
$(k,\,l)$-homogeneous. Recall that we have the decomposition $P=P_1\ti_\cM P_0$, where
$\cM=h^1_0(P_0)=h^1_0(P)/\R^\ti$ (Theorem \ref{ii}). It is clear that
$$\A^{(k,\,l)}(P)=\A^k(P_1)\ot_{C^\infty(\cM)}\A^l(P_0)$$
and that
$$\A^{(1,\,\bullet)}(P)=\bigoplus_{l=0}^\infty\A^{(1,\,l)}(P)$$ is a bi-module over the algebra
$$\A^{(0,\,\bullet)}(P)=\bigoplus_{l=0}^\infty\A^{(0,\,l)}(P)$$ of polynomial functions on the vector bundle $P_0$.
Note that functions from $\A^{(0,1)}(P)$ can be identified with sections of the vector bundle $P_0^*$.

It is also easy to see that elements of $\A^{(1,1)}(P)$ represent linear sections of the even line bundle
$P^e$ associated with the principal $\R^\ti$-bundle $P$, and elements of $\A^{(1,0)}(P)$ represent sections of
the line bundle $P_1^e$. This line bundle can be also described as the restriction of $P^e$ to the
zero-section $0_{P_0}\simeq\cM$ of the vector bundle $P_0$. Note that, as $P\simeq P_1\ti_\cM P_0$, the
$\A^{(0,0)}(P)$-module $\A^{(1,1)}(P)$ can be identified with the module of sections of the vector bundle
$P_1^e\ot_\cM P_0^*$ (or the vector bundle $P_1^o\ot_\cM P_0^*$). Indeed, any such section $\ze=\zs\ot\ze_0$
defines by duality a function $\zi_\ze$ of degree $(1,1)$ on $P\simeq P_1\ti_\cM P_0$ by
$\zi_\ze=\zi_\zs\zi_{\ze_0}$, where $\zi_\zs$ is a 1-homogeneous function on $P_1=(L^*)^\ti$ represented by
$\zs\in\Sec(L)$ and $\zi_{\ze_0}$ is the linear function on $P_0$ represented by the section
$\ze_0\in\Sec(P_0^*)$.

The Poisson bracket $\{\cdot,\cdot\}=\{\cdot,\cdot\}_\cJ$ associated with the Poisson tensor $\cJ$ on $P$ is
homogeneous of bi-degree $(-1,-1)$, so
\be\label{laaa}\{\A^{(1,1)}(P),\A^{(k,l)}(P)\}\subset\A^{(k,l)}(P)\,.
\ee
In particular, the modules $\A^{(1,\,\bullet)}(P)$ over $\A^{(0,\,\bullet)}(P)$ and $\A^{(1,1)}(P)$ over
$\A^{(0,0)}(P)$ are closed with respect to the Poisson bracket.
\begin{definition}
The restriction of the Poisson bracket to the module $\A^{(1,\,\bullet)}(P)$ we will call the {\it graded
Kirillov bracket} associated with  $(P,\cJ,h^0,h^1)$ and denoted as $[\cdot,\cdot]_\cJ^\bullet$, or simply
$[\cdot,\cdot]^\bullet$ if $\cJ$ is fixed. The restriction of the Poisson bracket to the module
$\A^{(1,1)}(P)$ we will call the {\it Kirillov algebroid bracket} associated with  $(P,\cJ,h^0,h^1)$ and
denoted as $[\cdot,\cdot]_\cJ^1$, or simply $[\cdot,\cdot]^1$ if $\cJ$ is fixed.

\end{definition}
Let us denote $L=L(P)=P_1^e$ if $\cJ$ is even, and $L=L(P)=P_1^o=\zP P_1^e$ if $\cJ$ is odd. If we put
$\cE(P)=L\ot_\cM P_0^*$, then this bracket interpreted as a bracket on sections of $\cE(P)$ is even and
defines a Lie algebroid structure on $\cE(P)$ with the anchor map defined by
$$\zr(\ze)(f)=\{\zi_\ze,f\}\,.$$
This Lie algebroid we will call the {\it Lie algebroid of the linear principal Poisson $\R^\ti$-bundle}
$(P,\cJ,h^0,h^1)$.

We have also
\be\label{ff}\{\A^{(0,1)}(P),\A^{(1,0)}(P)\}\subset\A^{(0,0)}(P)
\ee
that allows us to represent any $\zs\in\A^{(0,1)}(P)\simeq\Sec(P_0^*)$ as a first-order differential operator
$\Upsilon(\zs):\Sec(L)\ra C^\infty(\cM)$. Since
$$\{\A^{(0,0)}(P),\A^{(1,0)}(P)\}\subset\A^{(0,-1)}(P)=\{ 0\}\,,$$ we have $\Upsilon(f\zs)=f\Upsilon(\zs)$ for
$f\in C^\infty(\cM)$. The map $\zs\mapsto \Upsilon(\zs)$ preserves or reverses the parity depending on the
parity of $\cJ$, so it represents a smooth map into $(\sJ^1L)^*$,
\be\label{bum}\Upsilon:P_0^\ast\ra (\sJ^1L)^*\,,
\ee
covering the identity on $\cM$. We have an obvious extension of this map to a vector bundle morphism
\be\label{doaa}\zr^{\! 1}:\cE(P)=L\ot_\cM P_0^\ast\ra (\sJ^1L)^\ast\ot_\cM L\simeq\DO^1(L,L)\,,
\ee
that follows also directly from the inclusion
$$\{\A^{(1,1)}(P),\A^{(1,0)}(P)\}\subset\A^{(1,0)}(P)\,.$$
In other words, $\zi_{\zr^{\! 1}(\ze)(\zs)}=\{\zi_\ze,\zi_\zs\}$.

From the Jacobi identity it follows immediately that the map $\zr^{\! 1}$ induces a homomorphism of the
Poisson bracket on $\A^{(1,1)}(P)$ viewed as the space of sections $\zs$ of $\cE(P)$ into the
(super)commutator in $\DO^1(L,L)$, i.e., for $\ze,\ze'\in\Sec(\cE(P))$ of parity $\zd,\zd'$, respectively, we
have
\be\label{doc}\zr^{\! 1}\left([\ze,\ze']^1\right)=[\zr^{\! 1}(\ze),\zr^{\! 1}(\ze')]=
\zr^{\! 1}(\ze)\circ \zr^{\! 1}(\ze')-(-1)^{\zd\zd'}\zr^{\! 1}(\ze')\circ \zr^{\! 1}(\ze)\,.
\ee
The map $\zr^{\! 1}$ is actually a Lie algebroid morphism.

The dual of the map $\Upsilon$, i.e. the map $\Upsilon^\ast:\sJ^1L\ra P_0$, can be extended to a map
\be\label{fho}\wt{\Upsilon}:C_{L}^\ti\simeq(L^\ast)^\ti\ti_\cM \sJ^1L\ra(L^\ast)^\ti\ti_\cM
 P_0\simeq P\,.
\ee
It is easy to see that, for any section $\ze$ of $\cE(P)$, we have
$\zi_\ze\circ\wt{\Upsilon}=\zi_{\zr^{\! 1}(\ze)}$.
Since the restriction of $(-1,-1)$-homogeneous Poisson bracket to $(1,1)$-homogeneous functions completely
determines the bracket, we infer, in view of (\ref{doc}), that $\wt{\Upsilon}$ is a Poisson map. This is the
contact analog of the map $\zr^\ast:\sT^\ast M\ra E^\ast$, dual to the anchor map $\zr:E\ra\sT M$, defined for
a Lie algebroid on a vector bundle $E$.

Note that we can change the parity in fibers of the vector bundle $\zt_0:P\ra P_1$. In this way we get a new
linear principal Poisson $\R^\ti$-bundle $(\zP P,\cJ^\zP,h^0,h^1)$ over $\cM$ with the reversed parity of the
Poisson tensor and with the double bundle structure described by the diagram
\be\label{FD01}\xymatrix{
\zP P\ar[rr]^{\zt_0} \ar[d]^{\zt_1} && \zP P_0\ar[d]^{\bar{\zt_1}} \\
P_1\ar[rr]^{\bar{\zt_0}} && \cM }
\ee
To be more precise, let us observe that, formally, $\A^{(k,l)}(P)=\A^{(k,l)}(\zP P)$ for $0\le k,l\le 1$. We do
not change the Poisson bracket on these spaces, i.e. $\{\cdot,\cdot\}_\cJ$ and
$\{\cdot,\cdot\}_\cJ^{\zP}=\{\cdot,\cdot\}_{\cJ^\zP}$ coincide on $\A^{(1,1)}(P)=\A^{(1,1)}(\zP P)$. In other
words, the corresponding Poisson tensors are formally the same but define Poisson brackets of the reversed
parity. Of course, the algebras $\A^{(0,\bullet)}(P)$ and $\A^{(1,\bullet)}(\zP P)$ differ, since elements
from $\A^{(0,1)}(P)$ and $\A^{(0,1)}(\zP P)$ commute differently. The bracket $\{\cdot,\cdot\}_\cJ^{\zP}$
restricted to $\A^{(1,\bullet)}(\zP P)$ we will call the {\it graded Kirillov bracket} associated
with $(P,\cJ)$ and denote $[\cdot,\cdot]_{\cJ^\zP}^\bullet$. In this way we get the following.
\begin{theorem}\label{tr1}
Let  $(P,\cJ,h^0,h^1)$ be a linear principal Poisson  $\R^\ti$-bundle over $\cM$ with the double bundle
structure (\ref{FD0}) and let $L=L(P)$ be the line bundle $P_1^e$ in the case when $\cJ$ is even, and the line
bundle $P_1^o$ in the case when $\cJ$ is odd, with sections represented by functions from $\A^{(1,0)}(P)$.
Then, the corresponding Poisson bracket satisfies (\ref{laaa}) and induces:
\begin{description}
\item{(a)} a Lie algebroid bracket $[\cdot,\cdot]^1$ on the vector bundle $\cE(P)=L\ot_\cM P_0^\ast$ over $M$ whose sections are represented by
functions in the module $\A^{(1,1)}(P)$ over $\A^{(0,0)}(P)\simeq C^\infty(\cM)$;

\item{(b)} a morphism $\wt{\Upsilon}:C_{L}^\ti\ra P$ of linear principal Poisson \
$\R^\ti$-bundles covering the identity on $(L^*)^\ti$.

\item{(c)} a Lie algebroid morphism
\be\label{lam}\zr^{\! 1}:\cE(P)\ra\DO^1(L,L)
\ee
from $\cE(P)$ into the Lie algebroid $\DO^1(L,L)$ of first-order differential operators from $L$ into $L$ with
the (super)commutator bracket whose principal part gives the anchor of $\cE(P)$.

\item{(d)} a linear principal Poisson \ $\R^\ti$-bundle
$(\zP P,\cJ^\zP,h^0,h^1)$ over $\cM$ with the reversed parity of the Poisson tensor and a graded Kirillov bracket on
the $\A^{(0,\bullet)}(\zP P)$-module $\A^{(1,\bullet)}(\zP P)$.

\end{description}
\end{theorem}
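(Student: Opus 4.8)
The plan is to read each of the four assertions off the structural facts already assembled in the running discussion, since the genuine input---the $(-1,-1)$-homogeneity of $\{\cdot,\cdot\}_\cJ$ and the canonical splitting $P\simeq P_1\ti_\cM P_0$ of Theorem \ref{ii}---is in place. First I would record that homogeneity of $\cJ$ forces the inclusion (\ref{laaa}); specializing to $(k,l)=(1,1)$ shows that $\A^{(1,1)}(P)$ is closed under the Poisson bracket, and together with the identification $\ze\mapsto\zi_\ze$ of $\Sec(\cE(P))$ with $\A^{(1,1)}(P)$ over $\A^{(0,0)}(P)\simeq C^\infty(\cM)$ this furnishes the common base for all four items.

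For (a) I would verify the Lie algebroid axioms directly on $\A^{(1,1)}(P)$. Super-skew-symmetry and the super-Jacobi identity are inherited verbatim from $\{\cdot,\cdot\}_\cJ$, and since the restricted bracket is even on $\Sec(\cE(P))$ it is an ordinary Lie bracket. The anchor $\zr(\ze)(f)=\{\zi_\ze,f\}$ lands in $\A^{(0,0)}(P)\simeq C^\infty(\cM)$ by the degree count $\{\A^{(1,1)},\A^{(0,0)}\}\subset\A^{(0,0)}$, and it is a derivation, hence a vector field on $\cM$, directly by the Leibniz rule of the Poisson bracket. The anchored Leibniz rule $[\ze,f\ze']^1=f[\ze,\ze']^1+\zr(\ze)(f)\ze'$ then follows from $\zi_{f\ze'}=f\zi_{\ze'}$ (with $f$ even) and the derivation property of $\{\zi_\ze,\cdot\}$; the remaining requirement that $\zr$ be a bracket homomorphism is the standard consequence of skew-symmetry, Jacobi, and the anchored Leibniz rule.

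Items (b) and (c) I would obtain from the maps already constructed, using the vanishing $\A^{(k,-1)}(P)=\{0\}$ (there are no functions of negative fibre-degree on the vector bundle $P_0$) to guarantee the $C^\infty(\cM)$-linearity of $\Upsilon$, and hence that $\zr^{\!1}$ in (\ref{doaa}) and $\wt{\Upsilon}$ in (\ref{fho}) are genuine vector bundle morphisms. That $\zr^{\!1}$ is a Lie algebroid morphism is exactly the content of (\ref{doc}), which says $\zr^{\!1}$ intertwines $[\cdot,\cdot]^1$ with the super-commutator in $\DO^1(L,L)$; that its principal symbol reproduces the anchor $\zr$ follows by restricting the operator $\zr^{\!1}(\ze)$ to basic functions, comparing $\{\A^{(1,1)},\A^{(1,0)}\}\subset\A^{(1,0)}$ with $\{\A^{(1,1)},\A^{(0,0)}\}\subset\A^{(0,0)}$. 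For (b), $\wt{\Upsilon}$ is fibrewise the dual of $\Upsilon$, so it is $\R^\ti$-equivariant, linear, and covers the identity on $(L^*)^\ti$; the one substantive point is that it is a Poisson map, which I would deduce---as in the text---from the fact that a $(-1,-1)$-homogeneous bracket is determined by its restriction to $\A^{(1,1)}$, combined with $\zi_\ze\circ\wt{\Upsilon}=\zi_{\zr^{\!1}(\ze)}$ and (\ref{doc}).

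The remaining item (d) is where I expect the only real subtlety, namely the parity bookkeeping. The fibrewise parity reversal of $\zt_0:P\ra P_1$ leaves the spaces $\A^{(k,l)}$ with $0\le k,l\le 1$ formally unchanged and carries $\cJ$ to a formally identical tensor $\cJ^\zP$ of the opposite parity, so the two brackets agree on $\A^{(1,1)}(P)=\A^{(1,1)}(\zP P)$; what must still be checked is that $\cJ^\zP$ remains homological, i.e. a Poisson tensor of degree $-1$, making $(\zP P,\cJ^\zP,h^0,h^1)$ a linear principal Poisson $\R^\ti$-bundle in its own right. I would argue this by noting that the parity shift interchanges the Schouten and the symmetric Schouten brackets on $P$, so that the homological equation $\{\cJ,\cJ\}_P^\zP=0$ of (\ref{Shc}) in the even case is transcribed literally into $\{\cJ^\zP,\cJ^\zP\}_{\zP P}=0$ in the odd case, and conversely; the restriction of the resulting bracket to $\A^{(1,\bullet)}(\zP P)$ is then the advertised graded Kirillov bracket. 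The hard part throughout is not any single computation but keeping the two gradings, the left/right module structures, and the parity convention mutually consistent---once that is secured, every claim is a transcription of (\ref{laaa}), (\ref{ff}), and (\ref{doc}).
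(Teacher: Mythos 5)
Your proposal is correct and follows essentially the same route as the paper: Theorem \ref{tr1} is not proved there by a separate argument but by exactly the constructions you reproduce --- the inclusion (\ref{laaa}) from $(-1,-1)$-homogeneity of $\{\cdot,\cdot\}_\cJ$, the maps $\Upsilon$, $\zr^{\!1}$, $\wt{\Upsilon}$ built from (\ref{ff}) and (\ref{doaa}) with $C^\infty(\cM)$-linearity coming from $\A^{(0,-1)}(P)=\{0\}$, the Poisson-map property of $\wt{\Upsilon}$ deduced from the fact that a bracket of bi-degree $(-1,-1)$ is determined by its restriction to $\A^{(1,1)}(P)$ together with (\ref{doc}), and the parity-shift discussion for (d). Your explicit verification of the Lie algebroid axioms in (a) and of the homological condition for $\cJ^\zP$ in (d) only makes explicit what the paper leaves implicit in its summary statement.
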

\begin{example}\label{eks0} Let $L$ be line bundle over $\cM$ and $\{\cdot,\cdot\}_L$ be a Kirillov bracket on sections of $L$. According to Theorem \ref{th1}, this Kirillov structure is associated with a principal Poisson $\R^\ti$-bundle structure  $(\bar{P},\bar{\cJ},\bar{h}^0)$ on $\bar{P}=(L^*)^\ti$. Passing to the tangent lifts (cf. Example \ref{ex1}), we get a linear principal Poisson $\R^\ti$-bundle $P=(\sT (L^*)^\ti,\dt\bar{\cJ},\sT \bar{h}^0,h^1)$.
In this case the Lie algebroid $\cE(P)$ is exactly the Lie algebroid associated with a Kirillov bracket (a
local Lie algebra structure) introduced in \cite{GM1}. In particular, if the line bundle $L$ is trivial and we
deal with a Jacobi structure on $\cM$, then $P=\sT(\R^\ti\ti\cM)$, so $P_0=\sT\cM\ti\R$ and  the corresponding
Lie algebroid structure lives on $\cE(P)=L\ot_\cM P_0^*=(\sT \cM\ti\R)^\ast=\sT^\ast \cM\ti\R$ and it is
exactly the Lie algebroid structure on $\sT^\ast \cM\ti\R$ associated with a Jacobi structure on $\cM$ as
first described in \cite{KSB} (cf. also \cite{Vs}).
\end{example}

\medskip
Let us stress that the structure we obtained on $\cE(P)$ is not only a Lie algebroid structure with a Lie
bracket $[\cdot,\cdot]^{\! 1}$, but also a Lie algebroid morphism $\zr^{\! 1}:\cE(P)\ra\DO^1(L,L)$. A part of
this morphism defines the anchor map of the Lie algebroid structure by passing to the principal symbol of a
first-order differential operator, but there is another part not directly seen by the Lie algebroid structure
on $\cE(P)$. Let us observe that there is a natural identification of first-order differential operators $D$
between sections of $L$ with invariant vector fields $\bar{D}$ in the principal $\R^\ti$-bundle $(L^*)^\ti$,
i.e. with sections of the Atiyah algebroid $\sT(L^*)^\ti/\R^\ti$ of $(L^*)^\ti$. The vector field $\bar{D}$ is
uniquely defined by the formula
\be\label{doid} \bar{D}(\zi_\zs)=\zi_{D(\zs)}\,,
\ee
where $\zs$ is any section of $L$ and $\zi_\zs$ is the corresponding homogeneous function on $(L^*)^\ti$.

On the other hand, the linear principal $\R^\ti$-bundle $P$ can be reconstructed from $\cE(P)$. Indeed, denote
the vector bundle dual to $P\to(L^*)^\ti$ with $P^*$, so that $P^*=(L^*)^\ti\ti_\cM P_0^*$. We have a
canonical isomorphism of vector bundles over $(L^*)^\ti$,
\be\label{recc}\mathfrak{H}:(L^*)^\ti\ti_\cM\cE(P)=(L^*)^\ti\ti_\cM\left(L\ot_\cM P_0^*\right)\to (L^*)^\ti\ti_\cM P_0^*=P^*\,,
\ee
given by
\be\label{recc1} (\zs^*,\zs\ot\zn)\mapsto (\zs^*,\zs^*(\zs)\zn)\,.
\ee
This isomorphism identifies sections $\ze$ of $\cE(P)$, viewed as $\R^\ti$-invariant (we will say also {\it
constant}) sections $\bar{\ze}$ of $E(P)=(L^*)^\ti\ti_\cM\cE(P)$, with 1-homogeneous sections $\wt{\ze}$ of
the vector bundle $P^*$, thus homogeneous functions of bi-degree $(1,1)$ on $P$. Note that the Lie algebroid
structure on $\cE(P)$ together with the Lie algebroid morphism (\ref{lam}) define a Lie algebroid structure on
$E=(L^*)^\ti\ti_\cM\cE(P)$, thus $P^*$, with a bracket $[\cdot,\cdot]$ and the anchor $\zr:E(P)\to\sT
(L^*)^\ti$ by
\bea\label{111}
&[\bar{\ze}_1,\bar{\ze}_2]=\overline{[{\ze}_1,{\ze}_2]^{\! 1}}\,,\\
&\zr(\bar{\ze})=\overline{\zr^{\! 1}(\ze)}\,. \label{recon3}
\eea
This Lie algebroid structure on $E$ over $(L^*)^\ti$ is {\it invariant}  with respect to the $\R^\ti$-action
in the sense that the bracket of invariant sections is invariant and the anchor of an invariant section is an
invariant vector field. The Lie algebroid on $P^*$ completely determines the Poisson structure we started
with. We can go back to $\cE(P)$ by considering constant sections only. In this way we get the following.
\begin{theorem}\label{thmain2}
A Kirillov algebroid can be equivalently defined as an invariant Lie algebroid structure on a vector
bundle $E$ over a principal $\R^\ti$-bundle $P_0$. If $P_0$ is trivial, $P_0=\R^\ti\ti\cM$, then we deal with
a Jacobi algebroid.
\end{theorem}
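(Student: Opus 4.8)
The plan is to prove the equivalence by exhibiting two mutually inverse constructions; since the geometry has essentially been assembled in the paragraphs preceding the statement, the proof will mostly be a matter of organizing that material around the classical linear-Poisson/Lie-algebroid duality. For the passage from Kirillov algebroids to invariant Lie algebroids I would start from a Kirillov algebroid $(P,\cJ,h^0,h^1)$, for which Theorem \ref{tr1} already produces the Lie algebroid $\cE(P)=L\ot_\cM P_0^\ast$ over $\cM$ (with $P_0=P/\R^\ti$ the vector bundle of Theorem \ref{tr1}) together with the Lie algebroid morphism $\zr^{\! 1}:\cE(P)\ra\DO^1(L,L)$. I would transport these onto the principal $\R^\ti$-bundle $(L^\ast)^\ti$, which is the principal bundle appearing in the statement, by forming the pull-back $E=(L^\ast)^\ti\ti_\cM\cE(P)$ and invoking the isomorphism $\mathfrak{H}$ of (\ref{recc})--(\ref{recc1}), which identifies $E$ with $P^\ast$ and sends constant (that is, $\R^\ti$-invariant) sections $\bar{\ze}$ to the $(1,1)$-homogeneous functions $\wt{\ze}$ on $P$. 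The bracket and anchor are then \emph{defined} on constant sections by (\ref{111}) and (\ref{recon3}) and extended to all sections by the Leibniz rule over $C^\infty((L^\ast)^\ti)$, the anchor landing in $\sT(L^\ast)^\ti$ through the identification (\ref{doid}) of first-order operators on $L$ with invariant vector fields on $(L^\ast)^\ti$.

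The verification that this is a genuine Lie algebroid is then routine: after the Leibniz extension the Jacobi identity and the anchor-compatibility reduce to the corresponding identities for $[\cdot,\cdot]^{\! 1}$ and $\zr^{\! 1}$ on constant sections, which hold by Theorem \ref{tr1}(a),(c). Invariance I would read off from the grading rather than check by hand: constant sections are precisely the $(1,1)$-homogeneous functions, the Poisson bracket $\{\cdot,\cdot\}_\cJ$ has bi-degree $(-1,-1)$ by (\ref{laaa}), so the bracket of two constant sections is again $(1,1)$-homogeneous, hence invariant, and likewise the anchor of an invariant section is an invariant vector field.

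For the converse I would begin with an invariant Lie algebroid on a vector bundle $E$ over a principal $\R^\ti$-bundle $Q$ (to be identified with $(L^\ast)^\ti$) and apply the classical duality recorded in (\ref{Lad})--(\ref{Lad1}): a Lie algebroid on $E$ is the same thing as a linear Poisson tensor $\cJ$ on the dual vector bundle $P:=E^\ast$, homogeneous of degree $-1$ with respect to the Euler field $\zD_1$ of $P\ra Q$. The principal $\R^\ti$-action on $Q$ lifts dually to a principal action $h^0$ on $P=E^\ast$ commuting with $h^1$, and through the same $(1,1)$-bookkeeping the invariance hypothesis becomes exactly the requirement that $\cJ$ be homogeneous of degree $-1$ under $h^0$. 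Thus $(P,\cJ,h^0,h^1)$ is a linear principal Poisson $\R^\ti$-bundle, i.e. a Kirillov algebroid, and the two assignments are mutually inverse because the linear-Poisson/Lie-algebroid correspondence is a bijection while $\mathfrak{H}$ matches the generating $(1,1)$-homogeneous functions with the constant sections. I expect the matching to be the main obstacle: one has to handle the two commuting homogeneity structures $h^0$ and $h^1$ at once and check that the apparently weak invariance condition, imposed only on invariant sections together with the Leibniz rule, is genuinely equivalent to degree $-1$ homogeneity of $\cJ$ under $h^0$ on the whole homogeneous algebra $\A(P)$.

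Finally, the principal $\R^\ti$-bundle $(L^\ast)^\ti$ is trivial, $(L^\ast)^\ti=\R^\ti\ti\cM$, precisely when the line bundle $L$ is trivial; in that case $P=(L^\ast)^\ti\ti_\cM(P/\R^\ti)=\R^\ti\ti(P/\R^\ti)$ is itself a trivial principal $\R^\ti$-bundle, so the Kirillov algebroid falls under the trivial-bundle clause of the definition opening Section 8 and yields, after descending the invariant structure from $\R^\ti\ti\cM$ to $\cM$, a Jacobi algebroid in the sense of \cite{GM1}.
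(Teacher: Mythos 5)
Your first half is, in both strategy and detail, the paper's own argument: Theorem \ref{tr1} supplies $\cE(P)$ and $\zr^{\!1}$, the isomorphism $\mathfrak{H}$ of (\ref{recc})--(\ref{recc1}) transports everything to $E(P)\simeq P^*$, the bracket and anchor are fixed on constant sections by (\ref{111}) and (\ref{recon3}), and invariance is read off from the bi-degree $(-1,-1)$ of the Poisson bracket, cf.\ (\ref{laaa}); the closing paragraph on the Jacobi case is also fine.

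The converse, however, contains a genuine error, located exactly at the step you flag as ``the main obstacle'' but then dispose of too quickly. Write $h^E$ for the given $\R^\ti$-action on $E$ (it is part of the data, since otherwise ``invariant section'' is meaningless). The only natural reading of your ``lifts dually'' is the contragredient action $h^{E^*}_t=(h^E_{t^{-1}})^*$ on $P=E^*$. But with respect to that action an invariant section $e$ of $E$ corresponds to an $h^{E^*}$-\emph{invariant} linear function $\zi_e$ on $P$, i.e.\ a function of bi-degree $(0,1)$, not $(1,1)$; consequently the invariance hypothesis translates into $\cJ$ being \emph{invariant} (degree $0$) under $h^{E^*}$, not homogeneous of degree $-1$, so $(E^*,\cJ,h^{E^*},h^1)$ is not a Kirillov algebroid, and your two assignments are not mutually inverse. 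Concretely, in Example \ref{eks} one has $E=\sT(\R^\ti\ti\cM)$ with the tangent-lifted action; its contragredient on $P=\sT^*(\R^\ti\ti\cM)$ reads $(t,z,x^a,p_a)\mapsto(st,z/s,x^a,p_a)$, under which $\cJ=\pa_t\pa_z+\pa_{x^a}\pa_{p_a}$ has degree $0$, whereas the paper's $h^0$ is the \emph{phase} lift $(t,z,x^a,p_a)\mapsto(st,z,x^a,sp_a)$, under which $\cJ$ has degree $-1$. The cure is the homothety twist: one must take $h^0_t:=h^{E^*}_t\circ h^1_t$, in exact analogy with (\ref{lifts}), where the lift of the principal action to the cotangent bundle is $t.(\sT h_{t^{-1}})^*$ and not the bare dual $(\sT h_{t^{-1}})^*$. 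With this twisted action the invariant sections do correspond to the $(1,1)$-homogeneous functions and invariance of the algebroid does become degree $-1$ homogeneity of $\cJ$; this is precisely what the factor $\zs^*(\zs)$ in $\mathfrak{H}$, formula (\ref{recc1}), builds in, and it is the content of the bi-degree shift $\A^{(k,l)}\mapsto\A^{(k-l,l)}$ of Remark \ref{rm1}, whose whole point is that $1$-homogeneous and invariant Lie algebroid structures differ by this change of the $\R^\ti$-action.
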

\begin{remark}\label{rm1} The isomorphism (\ref{recc}) is a geometrical counterpart of a bi-degree shift in the linear principal bundle $(P^*,h^0,h^1)$ associated with a change of the $\R^\ti$-action. Since the group $\R^\ti$ acts canonically in each vector bundle by non-zero homotheties, we can consider a new principal $\R^\ti$-action $\wt{h}^0$ on $P^*$ putting $\wt{h}^0_t=h^0_t\circ h^1_{-t}$. This produces the shift $\A^{(k,l)}(P^*)\mapsto\A^{(k-l,l)}(P^*)$ in bi-degrees. In particular, linear 1-homogeneous tensors, i.e. tensors of the bi-degree $(1,1)$, become linear invariant tensors, i.e. tensors of bi-degree $(0,1)$. This explains how 1-homogeneous Lie algebroid structures have turned into invariant Lie algebroid structures.
\end{remark}
\begin{example} In the case of a Jacobi algebroid, $P_0=\R^\ti\ti\cM$, we can reduce the Lie algebroid structure to $\cE=E/\R^\ti$ over $\cM$, interpreting sections of $\cE$ as invariant sections of $E$. As the anchor map associates with any  invariant section $\bar{\ze}$ of $E$ an invariant vector field $\zr(\bar{\ze})$ on $\R^\ti\ti\cM$, its projection to $\cM$ gives the anchor $\bar{\zr}^{\! 1}(\bar{\ze})$ for the Lie algebroid $\cE$, while its vertical part, $f(m)\pa_t$,  defines a map $\bar{\ze}\mapsto f$ represented by a 1-form $\zm$ on $\cE$ (a section of $\cE^*$), $\zm(\bar{\ze})=f\in C^\infty(\cM)$.
Since $\zr:E\to\sT(\R^\ti\ti\cM)$ is a Lie algebroid morphism, the 1-form $\zm$ is $\cE$-closed,
$\xd_\cE\zm=0$, and we arrive at the standard definition of a Jacobi algebroid (called also a {\it generalized
Lie algebroid}) \cite{GM1,IM}.
\end{example}

\section{Contact $n$-vector bundles}
In the case of the canonical contact structure $C_L$ and the corresponding symplectic Poisson tensor $\cJ$ on
$P=C^\ti_L$, the maps $\Upsilon$, $\zr^{\! 1}$, and $\wt{\Upsilon}$, defined in the previous section, are all
isomorphisms. Indeed,  in this case $P_0=\sJ^1L$ and $P_1=(L^*)^\ti$, so $L(P)=L$, $\cE(P)=\DO^1(L,L)$,  and
the map $\Upsilon$, thus $\zr^{\! 1}$ and $\wt{\Upsilon}$, are all the identity maps. The space of linear
sections of $C_L^\ast$ can be then identified with the space of first-order linear differential operators
acting on sections of $L$. Fixing a local trivialization of $L$ and the corresponding affine coordinates
$(t,x^a,z,\mathfrak{p}_b)$ in $C_L$, the linear section $\ze$ of $C_L^\ast$ corresponding to the
$(1,1)$-homogeneous function $\zi_{\ze}=t(f_a(x)\mathfrak{p}_a+h(x)z)$ on $C_L$ represents the differential
operator $D_\ze=f_a(x)\pa_{x^a}+h(x)$. These first-order differential operators are sections of the
corresponding bundle $\DO^1(L,L)$ over $\cM$ with the basis of local sections represented by $p_a=t\mathfrak{p}_a$
and $tz$. Moreover, the Legendre bracket $\{\ze,\ze'\}_{C_L}$ of linear sections of given parity corresponds,
{\it via} the identification $\ze\leftrightarrow D_\ze$, to the (graded) commutator $[D_\ze,D_{\ze'}]$ of
differential operators.

Conversely, if $\cJ$ is invertible, i.e. $\zw=\cJ^{-1}$ is a symplectic structure, then the map $\Psi_\cJ$ is
an isomorphism. For, let us observe that the map $\Psi_\cJ$ is derived from a bilinear form
$\la\cdot,\cdot\ran_\cJ$ on $P_0^\ast\ti_\cM \sJ^1L$ defined by means of (\ref{ff}). If the Poisson bracket is
symplectic, then this map is a pairing, so $\Psi_\cJ$ is an isomorphism. Indeed, being of bi-degree $(-1,-1)$
the Poisson bracket $\{\cdot,\cdot\}_\cJ$ is completely determined by its restriction to
$\A^{(1,1)}(P)=\A^{(1,0)}(P)\cdot\A^{(0,1)}(P)$ and it is nondegenerate if and only if
$\la\cdot,\cdot\ran_\cJ$ is nondegenerate. Summarizing our observations, we get the following.

\begin{theorem}\label{tr2} For a linear principal Poisson \ $\R^\ti$-bundle $(P,\cJ,h^0,h^1)$ the following are equivalent:
\begin{description}
\item{(i)} The Poisson tensor $\cJ$ is invertible (symplectic);

\item{(ii)} the map
$$\Upsilon:P_0^\ast\ra \sJ^1L^\ast$$ is a vector bundle isomorphism;

\item{(iii)} the map
$$\zr^{\! 1}:\cE(P)\ra\DO^1(L,L)$$
described by (\ref{doaa}) is an isomorphism of Lie algebroids.
\end{description}
If this is the case, then $(P,\cJ^{-1},h^0,h^1)$ is a linear symplectic principal \ $\R^\ti$-bundle; and the
map (\ref{fho}) constitutes a canonical isomorphism between $(P,\cJ^{-1})$ and the linear symplectic principal
$\R^\ti$-bundle $(C_{L}^\ti,\zw)$ associated with the canonical contact structure on the first jet bundle
$\sJ^1L$.
\end{theorem}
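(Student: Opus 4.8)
The plan is to reduce all three conditions to the single statement that the bilinear pairing
$$\la\cdot,\cdot\ran_\cJ : P_0^\ast\ti_\cM\sJ^1L\ra\R$$
induced by (\ref{ff}) is nondegenerate, since $\Upsilon$ is by definition the bundle map associated with this pairing and, under the identifications $\cE(P)=L\ot_\cM P_0^\ast$ and $\DO^1(L,L)=(\sJ^1L)^\ast\ot_\cM L$, the map (\ref{doaa}) is just $\zr^{\! 1}=\mathrm{id}_L\ot\Upsilon$. The equivalence (ii)$\Leftrightarrow$(iii) is then immediate: $\zr^{\! 1}$ is a vector bundle isomorphism precisely when $\Upsilon$ is, and since $\zr^{\! 1}$ is always a Lie algebroid morphism by (\ref{doc}), a bijective such morphism is automatically an isomorphism of Lie algebroids. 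Hence the whole weight of the theorem lies in (i)$\Leftrightarrow$(ii).

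For (i)$\Leftrightarrow$(ii) I would exploit that $\cJ$ has bi-degree $(-1,-1)$, so in the homogeneous coordinates of Theorem \ref{ii} — write $t$ of degree $(1,0)$, base coordinates $x^a$ of degree $(0,0)$, and fibre coordinates $s_\mu$ of $P_0$ of degree $(0,1)$ — the only brackets of generators not killed by the grading are
$$\{t,s_\mu\}=a_\mu(x),\quad \{s_\mu,x^a\}=t^{-1}b^a_\mu(x),\quad \{s_\mu,s_\nu\}=t^{-1}c^\lambda_{\mu\nu}(x)s_\lambda.$$
Thus the matrix of $\cJ$ in the basis $(t,x^a;s_\mu)$ has the block form $\begin{pmatrix}0&B\\-B^{\top}&C\end{pmatrix}$, where the off-diagonal block $B$, with rows $\{t,s_\mu\}$ and $\{x^a,s_\mu\}$, encodes exactly the pairing $\la\cdot,\cdot\ran_\cJ$ (equivalently $\Upsilon$), while $C=t^{-1}c^\lambda_{\mu\nu}s_\lambda$ is linear in the fibre coordinates. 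If (ii) holds, $\Upsilon$ is an isomorphism, so $B$ is square and invertible, and then $\begin{pmatrix}0&B\\-B^{\top}&C\end{pmatrix}$ has trivial kernel for \emph{every} value of $C$ (solve $Bv_2=0$, then $-B^\top v_1+Cv_2=0$); hence $\cJ$ is nondegenerate everywhere, proving (ii)$\Rightarrow$(i).

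The delicate direction (i)$\Rightarrow$(ii), which I expect to be the main obstacle, uses that $C$, being linear in the $s_\mu$, vanishes along the zero-section of $P_0$. Restricting the symplectic (hence everywhere nondegenerate) $\cJ$ to this submanifold, the block matrix becomes $\begin{pmatrix}0&B\\-B^{\top}&0\end{pmatrix}$, whose invertibility forces $B$ to be square and invertible; since the entries of $B$ depend only on $x$, this invertibility holds over all of $\cM$ and thus over all of $P$, giving that $\Upsilon$ is an isomorphism. Abstractly this is the assertion already highlighted before the statement — a bracket of bi-degree $(-1,-1)$ is determined by its restriction to $\A^{(1,1)}(P)=\A^{(1,0)}(P)\cdot\A^{(0,1)}(P)$, and is nondegenerate iff $\la\cdot,\cdot\ran_\cJ$ is — the care needed being precisely to show that the same-weight block $C$ cannot compensate for a degenerate pairing, for which reduction to the zero-section is the cleanest device. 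Finally, assuming (i), the inverse $\zw=\cJ^{-1}$ is a closed, nondegenerate two-form whose homogeneity degrees are opposite to those of $\cJ$, hence $1$-homogeneous with respect to both $h^0$ and $h^1$, so $(P,\zw,h^0,h^1)$ is a linear symplectic principal $\R^\ti$-bundle. Since $\Upsilon$, and therefore $\Upsilon^\ast:\sJ^1L\ra P_0$, is now an isomorphism, the map (\ref{fho}), $\wt\Upsilon:C_L^\ti\simeq(L^\ast)^\ti\ti_\cM\sJ^1L\ra(L^\ast)^\ti\ti_\cM P_0\simeq P$, is a diffeomorphism intertwining the $\R^\ti$-actions and the linear bundle structures by construction; having already been shown to be a Poisson map (via $\zi_\ze\circ\wt\Upsilon=\zi_{\zr^{\! 1}(\ze)}$ together with (\ref{doc})), it is then a symplectomorphism between symplectic manifolds, and hence the required canonical isomorphism between $(P,\cJ^{-1})$ and the canonical contact structure $(C_L^\ti,\zw)$ on $\sJ^1L$.
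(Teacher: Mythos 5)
Your proposal is correct and follows essentially the same route as the paper: both reduce all three conditions to the nondegeneracy of the pairing $\la\cdot,\cdot\ran_\cJ$ induced by (\ref{ff}), observe that $\zr^{\! 1}$ is just $\Upsilon$ tensored with the identity on $L$, and obtain the final statement from the previously established fact that $\wt{\Upsilon}$ is a Poisson map together with the homogeneity of $\zw=\cJ^{-1}$. The only difference is one of detail: where the paper asserts in a single sentence that a Poisson bracket of bi-degree $(-1,-1)$ is nondegenerate if and only if $\la\cdot,\cdot\ran_\cJ$ is, your block-matrix computation --- in particular the restriction to the zero section of $P_0$ to annihilate the same-weight block $C$ --- supplies an explicit verification of exactly that assertion.
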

\begin{corollary}\label{c5}
Any linear even (resp., odd) contact structure is isomorphic with a canonical contact structure on the first
jet bundle of an even (resp., odd) line bundle, i.e. with the cotangent bundle $\sT^\ast \uP$ (resp., $\zP
\sT^\ast \uP$) of a principal $\R^\ti$-bundle $\uP$.
\end{corollary}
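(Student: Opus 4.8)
The plan is to deduce the statement from Theorem~\ref{tr2} by passing from the symplectic form to its inverse Poisson tensor. First I would unwind the definitions: by Theorem~\ref{t01a} applied with $n=1$ and $k=\ao$, a linear even (resp.\ odd) contact structure is precisely a linear even (resp.\ odd) symplectic principal $\R^\ti$-bundle $(P,\zw,h^0,h^1)$, i.e.\ an $\R^\ti$-bundle $P$ carrying two commuting homogeneity structures $h^0,h^1$ (with $h^1$ a vector-bundle structure) together with an even (resp.\ odd) symplectic form $\zw$ that is $1$-homogeneous with respect to both, $(h^0_t)^*\zw=(h^1_t)^*\zw=t\zw$.

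Next I would invert the symplectic form. Put $\cJ=\zw^{-1}$. Since $\zw$ is closed, $\cJ$ satisfies the Jacobi identity and is a genuine even (resp.\ odd) Poisson tensor of the same parity; and since the contraction isomorphism defined by $\zw$ raises weight by $1$, its inverse lowers weight by $1$, so $\Ll_{\zD_i}\cJ=-\cJ$ and $(h^i_t)_*\cJ=\tfrac1t\cJ$ for $i=0,1$, which is exactly condition~(\ref{hmgp}). Thus $(P,\cJ,h^0,h^1)$ is a linear principal Poisson $\R^\ti$-bundle in the sense of Definition~\ref{d1}, and $\cJ$ is invertible by construction, so hypothesis~(i) of Theorem~\ref{tr2} is satisfied.

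Now I would apply Theorem~\ref{tr2} directly. Let $L=L(P)$ be the associated line bundle ($P_1^e$ if $\zw$ is even, $P_1^o=\zP P_1^e$ if $\zw$ is odd). The theorem asserts that the map~(\ref{fho}) is a canonical isomorphism of $(P,\cJ^{-1},h^0,h^1)=(P,\zw,h^0,h^1)$ onto the linear symplectic principal $\R^\ti$-bundle $(C_L^\ti,\zw)$ attached to the canonical contact structure on the first jet bundle $\sJ^1L$. Finally, the earlier theorem identifying canonical even (resp.\ odd) contact structures with cotangent bundles of principal $\R^\ti$-bundles gives $C_L^\ti\simeq\sT^\ast(L^*)^\ti$ (resp.\ $C_L^\ti\simeq\zP\sT^\ast(L^*)^\ti$). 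Setting $\uP=(L^*)^\ti$, and noting that the mutual-inverse relation between line bundles and principal bundles yields $(L^*)^\ti=P_1=h^1_0(P)$, produces exactly the asserted isomorphism.

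The proof is essentially an assembly of results already in hand, so the work lies in the bookkeeping rather than in any single hard step. The point that needs genuine care is the parity accounting in the odd case: one must carry the shift $\zP$ consistently, so that an odd symplectic form on $P$ corresponds to the odd contact structure $C_L\subset\zP\sT^\ast\sJ^1L$ and hence to $\zP\sT^\ast(L^*)^\ti$, and one must confirm that inverting $\zw$ preserves parity while sending weight $+1$ to weight $-1$. Once the even/odd and degree conventions are matched, the chain of canonical isomorphisms closes and the corollary follows.
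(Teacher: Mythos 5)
Your proof is correct and takes essentially the same route the paper intends: the corollary is stated as an immediate consequence of Theorem \ref{tr2}, obtained exactly as you do by identifying a linear contact structure with a linear symplectic principal $\R^\ti$-bundle via Theorem \ref{t01a}, inverting $\zw$ to a Poisson tensor of weight $-1$ so that hypothesis (i) of Theorem \ref{tr2} holds, and then using the Section 6 identification $C_L^\ti\simeq\sT^\ast(L^\ast)^\ti$ (resp. $\zP\sT^\ast(L^\ast)^\ti$) with $\uP=(L^\ast)^\ti=P_1$. Your bookkeeping, including the parity accounting in the odd case and the weight shift under inversion of the symplectic form, matches the paper's conventions.
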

\begin{remark} The above result is a contact analog of the well-known fact that any linear symplectic manifold can be
identified with the cotangent bundle of a manifold with its canonical symplectic structure.

It is easy to see, in turn, that the space $\A^{(1,\bullet)}(L)$ represents the space of principal symbols of
differential operators in $L$, and that the Poisson bracket $\{\cdot,\cdot\}$ is closed on principal symbols
representing the principal part of the (super)commutator of differential operators. We will not go into
details just referring to \cite{GM2} and the literature cited there. Note only that the observation on the
canonical Poisson structure on $\sT^\ast\cM$ can be obtained from the bracket in principal symbols viewed as
polynomial functions in $\sT^\ast\cM$ and the commutator of differential operators is, up to our knowledge,
due to Vinogradov \cite{Vi}.
\end{remark}
\begin{example}\label{eks}
Consider the contact structure of the trivial even line bundle $L=\R^\ti\ti M$ over  $\cM$, i.e. the canonical
linear principal symplectic $\R^\ti$-bundle $P=\sT^*(\R^\ti\ti \cM)=\R^\ti\ti(\R\ti\sT^* \cM)$. In the
standard Darboux coordinates $(t,z,x^a,p_a)$ of the bi-degrees, respectively, $(1,0),(0,1),(0,0),(1,1)$, the
Poisson tensor takes the form $\cJ=\pa_t\pa_z+\pa_{x^a}\pa_{p_a}$. On the vector bundle $\sT^*(\R^\ti\ti
\cM)/\R^\ti=\sJ^1(\R\ti \cM)=\R\ti\sT^* \cM$ we have coordinates $(z,x^a,\fp_a)$, where $t\fp_a=p_a$. The
vector bundle $\cE(P)$ is in this case $\cE(P)=(\R\ti\sT^* \cM)^*=\R\ti\sT \cM=\DO^1(\cM)$. Any its section
$\ze=g(x)+f^a\pa_{x^a}$ is a first-order differential operator on $\cM$ and corresponds to the function
$\zi_\ze=g(x)tz+f^a(x)p_a=t(g(x)z+f^a(x)\fp_a)$ of the bi-degree $(1,1)$ on $P$. It is easy to see that the
Poisson bracket of such functions corresponds to the commutator of the associated first-order differential
operators. The corresponding Kirillov algebroid is $E(P)=\R^\ti\ti\R\ti\sT\cM$ with local coordinates
$(t,\dot{\mathfrak{t}},x^a,\dot{x}^a)$ and and a vector bundle isomorphism
$\mathfrak{H}:=E(P)\to\sT(\R^\ti\ti\cM)=P^*$ given in the adapted coordinates $(t,x^a,\dot{t},\dot{x}^a)$ in
$\sT(\R^\ti\ti\cM)$ of the bi-degrees $(1,0),(0,0),(1,1),(0,1)$, respectively,  by
$$(t,x^a,\dot{t},\dot{x}^a)=(t,x^a,t\dot{\mathfrak{t}},\dot{x}^a)\,.$$
The Kirillov algebroid is therefore $\sT(\R^\ti\ti\cM)$ with the bracket of vector fields for which
$\R^\ti$-invariant vector fields $g(x)t\pa_t+f^a(x)p_a$ form a Lie subalgebra.
\end{example}

Recall that an $n$-vector bundle $\cM$ with an even (odd) contact structure $C\subset\sT^\ast \cM$ of degree
$\ao$ we will call an even (odd) {\it contact $n$-vector bundle}. It can be also described as an $n$-linear
symplectic principal $\R^\ti$-bundle. In this language Corollary \ref{c5} tells us that even (odd) contact
vector bundles are exactly the canonical contact structures, i.e. they are represented by the cotangent
bundles of principal $\R^\ti$-bundles. We have the following straightforward generalization.
\begin{theorem}\label{nlcs} The cotangent bundle $\sT^\ast \uP$ (resp., $\zP \sT^\ast \uP$) of an $(n-1)$-linear
principal $\R^\ti$-bundle $(\bar{P},\bar{h}^0,\bar{h}^1,\dots,\bar{h}^{n-1})$  is canonically an even (resp., odd)
contact $n$-vector bundle with the homogeneity structures $h^i=\sT^* \bar{h}^i$, $i=0,\dots, n-1$,  and the homogeneity
structure $h^n$ associated with homotheties in the vector bundle $\zp_\uP:=\sT^\ast \uP\to \uP$.

Any even (resp., odd) contact $n$-vector bundle $(P,\zw,h^0,\dots,h^n)$ is of this form, where as $\uP$ can be
taken any of the side vector bundles $h^i_0:P\to \uP_i$, $i=1,\dots, n$. In particular, all contact $n$-vector
bundles $\sT^\ast \uP_i$ (resp., $\zP \sT^\ast \uP_i$) are canonically isomorphic as well as all first jet
bundles $\sJ^1L_i$, where $L_i=\uP_i^e$ (resp., $L_i=\uP_i^o$), $i=1,\dots,n$.
\end{theorem}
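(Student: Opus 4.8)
The plan is to establish the two directions separately, using the linear ($n=1$) result (Theorem \ref{tr2} and Corollary \ref{c5}) as the engine and the phase-lift calculus of Section 2.3 to recognise the extra gradings. Throughout I identify an even (resp. odd) contact $n$-vector bundle with an $n$-linear symplectic (resp. odd symplectic) principal $\R^\ti$-bundle, as Theorem \ref{t01a} permits, and treat only the even case, the odd one being verbatim with $\sT^\ast$ replaced by $\zP\sT^\ast$ and the odd canonical form.

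For the construction direction I would start from an $(n-1)$-linear principal $\R^\ti$-bundle $(\uP,\bar h^0,\dots,\bar h^{n-1})$ and form $\sT^\ast\uP$. By the Example following Theorem \ref{t2}, the phase lift $h^0=\sT^\ast\bar h^0$ is again a principal $\R^\ti$-action; by Section 2.3 the phase lifts $h^i=\sT^\ast\bar h^i$ of the Euler vector fields $\bar h^1,\dots,\bar h^{n-1}$ are again Euler vector fields, and all of these together with the fibre homotheties $h^n=\zD_{\sT^\ast\uP}$ of $\zp_\uP$ pairwise commute. This exhibits $(\sT^\ast\uP,h^1,\dots,h^n)$ as an $n$-vector bundle carrying the commuting action $h^0$. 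I would then note that the canonical symplectic form $\zw=\xd p_a\,\xd x^a$ is $1$-homogeneous for $h^n$ (since $\zD_{\sT^\ast\uP}=p_a\pa_{p_a}$) and, being the $1$-phase lift of each $\bar h^i$, of weight $1$ for every $h^i$ and for $h^0$; hence $\zw$ has weight $\ao$ and is $\R^\ti$-homogeneous, so $\sT^\ast\uP$ is an $n$-linear symplectic principal $\R^\ti$-bundle, i.e. an even contact $n$-vector bundle.

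For the structure direction, given a contact $n$-vector bundle $(P,\zw,h^0,\dots,h^n)$ and a fixed $i\in\{1,\dots,n\}$, the first step is to forget all gradings except $h^0$ and $h^i$. Then $(P,\zw,h^0,h^i)$ is a linear symplectic principal $\R^\ti$-bundle, so Theorem \ref{tr2} and Corollary \ref{c5} produce a canonical symplectomorphism $\Phi_i\colon P\to\sT^\ast\uP_i$, with $\uP_i=h^i_0(P)$, covering the identity on $\uP_i$, sending $h^0$ to $\sT^\ast\bar h^0$ and $h^i$ to the cotangent homotheties $\zD_{\sT^\ast\uP_i}$. Since every other $h^j$ commutes with $h^i$ it preserves $\uP_i$ and descends to a homogeneity structure $\bar h^j$ there; because the $\zD^j$ stay Euler vector fields and commute with the projected $\R^\ti$-action, $\uP_i$ is itself an $(n-1)$-linear principal $\R^\ti$-bundle, and $\Phi_i$ covers each $\bar h^j$ on the base.

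The hard part is to show that $\Phi_i$ carries each remaining $h^j$ to the phase lift $\sT^\ast\bar h^j$; everything else is bookkeeping. I would argue by a uniqueness property of the phase lift. The field $X_j=(\Phi_i)_\ast\zD^j$ covers $\bar h^j$ on the base, satisfies $\Ll_{X_j}\zw=\zw$, and commutes with $\zD_{\sT^\ast\uP_i}$ (the image of $h^i$). I claim these force $X_j=\dts(1)\zD^j$. Indeed the difference $Y=X_j-\dts(1)\zD^j$ is vertical over $\uP_i$, symplectic ($\Ll_Y\zw=0$), and commutes with the cotangent Euler field; writing $i_Y\zw$ as a locally exact basic one-form $\xd g$ gives $Y=\sum_b(\pa g/\pa x^b)\pa_{p_b}$, whence $[\zD_{\sT^\ast\uP_i},Y]=-Y$, and commutation forces $Y=0$. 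This identifies $(P,\zw,h^0,\dots,h^n)$ with $\sT^\ast\uP_i$ carrying the phase lifts and the fibre homotheties, for every $i$. Finally, since the $\Phi_i$ are canonical and fix $(P,\zw)$, all $\sT^\ast\uP_i$ are canonically isomorphic, and quotienting by the common action $h^0$ identifies all first jet bundles $\sJ^1L_i\cong P/\R^\ti$, $L_i=\uP_i^e$, as claimed.
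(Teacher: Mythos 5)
Your proposal is correct, and its construction half coincides with the paper's; your converse, however, takes a genuinely different (though structurally parallel) route. The paper anchors on the vector bundle structure $h^n$ alone: it first identifies $(P,\zw,h^n)$ with $(\sT^\ast\uP_n,\zw_{\uP_n})$ by the purely symplectic classification of linear symplectic structures from \cite{GR}, then recognizes $h^1,\dots,h^{n-1}$ as phase lifts by citing \cite[Proposition 5.1]{GR} (a linear vector field on $\sT^\ast\uP$ making $\zw_\uP$ $1$-homogeneous is determined by its values on $\uP$), and finally argues that $h^0$, which projects to $\bar{h}^0$, must be its phase lift because $\zw_\uP$ is $1$-homogeneous with respect to it; the statement for the other side bundles is then obtained by remarking that the argument applies verbatim to any index. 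You instead anchor on the contact pair $(h^0,h^i)$ and invoke the paper's own linear-contact classification (Theorem \ref{tr2}, Corollary \ref{c5}), which settles $h^0$ and $h^i$ simultaneously, and you identify the remaining $h^j$ with phase lifts by a hand-proved uniqueness lemma: a vertical vector field $Y$ with $\Ll_Y\zw=0$ commuting with the cotangent Euler field vanishes, since $i_Y\zw=\xd g$ with $g$ basic forces $[\zD_{\sT^\ast\uP_i},Y]=-Y$. That lemma is sound and is precisely a self-contained substitute for the cited \cite[Proposition 5.1]{GR}. What each approach buys: the paper's route is shorter because it outsources both the identification and the uniqueness to \cite{GR}, at the cost of a separate argument for $h^0$; your route stays inside the contact framework, handles $h^0$ for free, and, by treating all indices $i$ symmetrically from the start, yields directly that every side bundle $\uP_i$ works and that all $\sT^\ast\uP_i$ and all $\sJ^1L_i\simeq P/\R^\ti$ are canonically isomorphic. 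Two harmless caveats: you occasionally conflate homogeneity structures with their Euler vector fields, and you should note explicitly that a weight vector field determines its homogeneity structure, so that equality of the vector fields really gives equality of the actions $h^j=\sT^\ast\bar{h}^j$.
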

\begin{proof}
As the proof is completely analogous in the odd case, let us assume that our case is even. If
$(\bar{P},\bar{h}^0,\bar{h}^1,\dots,\bar{h}^{n-1})$ is an $(n-1)$-linear principal $\R^\ti$-bundle with the
principal $\R^\ti$-bundle structure $\ute_0:\uP\ra P_0$, and the vector bundle structures
$\ute_i:\uP\ra\uP_i$, $i=1,\dots,n-1$, then $P=\sT^\ast\uP$ is canonically an $n$-linear symplectic principal
$\R^\ti$-bundle with the canonical symplectic structure, with the principal $\R^\ti$-bundle structure
associated with $h^0=\sT^* \bar{h}^0$ (see theorem \ref{plift0}), and with the vector bundle structures
$h^1,\dots, h^n$, where $h^i=\sT^\ast \bar{h}^i$, $i=1,\dots,n-1$, and $h^n$ is associated with the vector
bundle structure $\zp_\uP:=\sT^\ast \uP\to \uP$.

Conversely, if $(P,\zw,h^0,\dots,h^n)$ is an $n$-linear symplectic $\R^\ti$-principal bundle, $n\ge 1$, then,
as linear symplectic structures are exactly these of the cotangent bundle (cf. \cite{GR}), the linear
symplectic structure $(P,\zw,h^n)$ is canonically isomorphic with $(\sT^\ast\uP,\zw_\uP,h^{\sT^\ast\uP})$,
where $\uP=\uP_n=\zt_n(P)$ is the base of the $n$th vector bundle structure and $h^{\sT^\ast\uP}$ is the
homogeneity structure of the vector bundle $\zp_\uP:\sT^\ast \uP\to \uP$. Since  $h^i$ commute with $h^n$, the
corresponding Euler vector fields $\zD_i$ are linear on $\sT^\ast\uP$ and project to Euler vector fields
$\bar{\zD}_i$ on $\uP$, $i=1,\dots,n-1$. As the linear vector fields on $\sT^\ast\uP$, making the canonical
symplectic form $\zw_\uP$ homogeneous od degree 1, are completely determined by their values on $\uP$ (cf.
\cite[Proposition 5.1]{GR}), we have $\zD_i=\sT^\ast\bar{\zD}_i$, $i=1,\dots,n-1$.  Hence, $h^i=\sT^\ast
\bar{h}^i$, $i=1,\dots,n-1$.

Similarly, $h^0$ projects to an $\R^\ti$-action $\bar{h}^0$ on $\uP$ that makes
$(\uP,\bar{h}^0,\dots,\bar{h}^{n-1})$ into an  $(n-1)$-linear principal $\R^\ti$-bundle. Since $\zw_\uP$ is
1-homogeneous with respect to $h^0$, it is necessarily the phase lift of $\bar{h}^0$.

Of course, we can as well take one of $\uP_1,\dots,\uP_{n-1}$ instead of $\uP_n$ with the same effect. Writing
the diagram (\ref{FD}) for these realizations, we get
\be\label{FDn}\xymatrix{
P\simeq\sT^\ast \uP_i\ar[rr]^{\zt_0} \ar[d]^{\zt_i} &&
P_0\simeq \sJ^1L_i\ar[d]^{\bar{\zt_i}} \\
\uP_i\ar[rr]^{\bar{\zt_0}} && \cM_i\simeq\uP_i/\R^\ti }
\ee
and the theorem follows.
\end{proof}

\begin{remark} The above theorem is a contact analog of a theorem on symplectic $n$-vector bundles
(\cite[Theorem 6.1]{GR}) stating that any such bundle $(F,\zw)$ is canonically isomorphic with the cotangent
bundle $\sT^\ast F_i$ for each of its side bundles $F_i$, $i=1,\dots,n$. In particular, all $\sT^\ast F_i$,
$i=1,\dots,n$, are canonically symplectically isomorphic. The latter is a generalization of the well-known
fact that, for a vector bundle $E$, the double vector bundles $\sT^\ast E$ and $\sT^\ast E^\ast$ are
canonically symplectically isomorphic.
\end{remark}

\section{Kirillov algebroid cohomology}
Consider again a linear principal Poisson $\R^\ti$-bundle $(P,\cJ,h^0,h^1)$. To fix our attention, let us
assume for a moment that $\cJ$ is even. The Kirillov algebroid bracket on $P^*$ is associated with the
Poisson bracket $\{\cdot,\cdot\}_\cJ$ which, according to Theorem \ref{t-main1}, is the derived bracket
generated by the linear 1-homogeneous Poisson tensor $\cJ$, viewed as a homological 1-homogeneous quadratic
Hamiltonian on
$$\zP C_P^\ti=C_{\zP P}^\ti=\zP \sT^*P\simeq\zP \sT^*\zP P^*\,,$$
from the Schouten bracket $\{\cdot,\cdot\}^{\zP}_{P}$. This Schouten bracket is of tri-degree $(-1,-1,-1)$ in
$\A(\zP \sT^*P)$, so that the quadratic Hamiltonian $\cJ$, being of the tri-degree $(1,1,2)$ and homological
induces the cochain complex
\be\label{chainc}
(\A^{(1,0,\bullet)}(\zP \sT^*P),\xd_\cJ)\,,\quad \xd_\cJ=\{\cJ,\cdot\}^{\zP}_{P}\,.
\ee
of $\A^{(0,0,0)}(\zP \sT^*P)$-modules. Note that $\A^{(0,0,0)}(\zP \sT^*P)=C^\infty(\cM)$, where $\cM$ is the
base of the vector bundle $P_0$, and since we can identify elements of $\A^{(1,0,l)}(\zP \sT^*P)$ with
1-homogeneous basic functions of degree $l$ on $\zP \sT^*P^*$, thus functions from $\A^{(1,l)}(\zP P^*)$, our
cochain complex is isomorphic with $(\A^{(1,\bullet)}(\zP P^*),\xd_\cJ)$. Note that $\A^{(1,\bullet)}(\zP
P^*)$ is canonically an $\A^{(0,\bullet)}(\zP P^*)$-module and $\xd_J$ is a first-order differential operator
on this module. A more vivid description would say that $\xd_\cJ$ is a homological first-order differential
operator acting on polynomial sections of the line bundle $\zP P\to \zP P_0$. If $\cJ$ is odd, we get an
analogous complex $(\A^{(1,\bullet)}(P^*),\xd_\cJ)$.
\begin{definition} Consider the Kirillov algebroid $E(P)\simeq P^*$ associated with a linear principal $\R^\ti$-bundle $(P,\cJ,h^0,h^1)$.
The cohomology of the complex $(\A^{(1,\bullet)}(\zP P^*),\xd_\cJ)$ if $\cJ$ is even, and of the complex
$(\A^{(1,\bullet)}(P^*),\xd_\cJ)$ if $\cJ$ is odd, we will call the {\it cohomology of the Kirillov
algebroid} $E(P)$.
\end{definition}
\begin{example}\label{eks1}
For a purely even $\cM$, consider $P=\sT^*(\R^\ti\ti \cM)=\R^\ti\ti(\R\ti\sT^* \cM)$ with the canonical
Poisson tensor $\cJ=\pa_t\pa_z+\pa_{x^a}\pa_{p_a}$ as in Example \ref{eks}. We have $P^*=\sT(\R^\ti\ti \cM)$
with adapted coordinates $(t,x^a,\dot{t},\dot{x}^a)$ and $\zP\sT^* \zP P^*=\zP\sT^*\zP\sT(\R^\ti\ti \cM)$ with
the adapted Darboux coordinates $(t,x^a,\dot{t},\dot{x}^a,\dot{z},\dot{p}_a,z,p_a)$. If, according to
$$\zP\sT^*\zP\sT(\R^\ti\ti \cM)\simeq \zP\sT^*\sT^*(\R^\ti\ti \cM)=\zP\sT^* P\,,$$
we will regard $(t,x^a,z,p_a,\dot{z},\dot{p}_a,\dot{t},\dot x^a)$ as coordinates in $\zP\sT^* P$, of the
tri-degrees, respectively,
$$(1,0,0),(0,0,0),(0,1,0),(1,1,0),(0,1,1),(1,1,1),(1,0,1),(0,0,1)\,,$$
and the parity indicated by the last term in the bi-degree, the canonical Poisson tensor on $\sT^* P$ takes
the form
$$\pa_{\dot{t}}\pa_z+\pa_t\pa_{\dot{z}}+\pa_{\dot{x}^a}\pa_{p_a}+\pa_{x^a}\pa_{\dot{p}_a}\,,$$
so that $\cJ$ is represented by the quadratic and odd Hamiltonian of the tri-degree $(1,1,2)$
$$\zi_\cJ=\dot{z}\dot{t}+\dot{p}_a\dot{x}^a\,.$$
The corresponding Hamiltonian vector field is therefore
$$\dot{z}\pa_z+\dot{t}\pa_t+\dot{p}_a\pa_{p_a}+\dot{x}^a\pa_{x^a}$$
which, reduced to functions from
$$\A^{(\bullet,0,\bullet)}(\zP\sT^* P)=\A(\zP P^*)\,,$$
i.e. functions in variables $t,x,\dot{t},\dot{x}$ gives
$$\xd_J=\dot{{t}}\pa_t+\dot{x}^a\pa_{x^a}\,.$$
In the coordinates $(t,\dot{\mathfrak{t}},x^a,\dot{x}^a)$ in $E(P)=\R^\ti\ti\R\ti\sT\cM$, where
$\dot{\mathfrak{t}}=t^{-1}\dot{t}$ is odd and of the bi-degree $(0,1)$, this vector field reads as
$$\xd_J=\dot{\mathfrak{t}}t\pa_t+\dot{x}^a\pa_{x^a}\,.$$
Since the variables $\dot{\mathfrak{t}},\dot{x}^a$ are odd, any functions from the $C^\infty(\cM)$-module
$\A^{(1,r)}(\zP P^*)$ is of the form
$$t(f(x)w(\dot{x})+\dot{\mathfrak{t}}g(x)u(\dot{x})\,,$$
where $\zm=f(x)w(\dot{x})$ is of the bi-degree $(0,r)$, thus represents an $r$-form on $\cM$, and
$\zn=u(\dot{x})$ is of the bi-degree $(0,r-1)$, thus represents an $(r-1)$-form. The function
$\dot{\mathfrak{t}}$ represents the closed 1-form $\zF$ on $\R\ti\sT\cM$ being the projection on $\R$ and
$\dot{x}^a\pa_{x^a}$ represents the standard de Rham derivative $\xd$. Moreover,
$$(\dot{\mathfrak{t}}t\pa_t+\dot{x}^a\pa_{x^a})(t(\zm+\dot{\mathfrak{t}}\zn))=
t(\dot{x}^a\pa_{x^a}(\zm)+\dot{\mathfrak{t}}(\zm+\dot{x}^a\pa_{x^a}(\zn))\,$$ Hence,
$$\zW^r(\cM)\oplus\zW^{r-1}(\cM)\simeq \A^{(1,r)}(\zP P^*)\simeq\A^r(\zP(\R\ti\sT M))\,, \quad (\zm,\zn)\simeq\zm+\zF\we\zn\,,$$
and the cohomology operator $\xd_\cJ$ reads as
$$\xd_\cJ(\zm+\zF\we\zn)=\xd\zm+\zF\we(\zm+\xd\zn)\,,$$
that gives a well-know chain complex associated with a closed 1-form $\zF$.
\end{example}\label{eks2}
\begin{example} Starting with an even Kirillov bracket on a line bundle $L$, associated with a principal
Poisson $\R^\ti$-bundle structure $(\uP=(L^*)^\ti,\bar{\cJ},\bar{h}^0)$, we can consider the cohomology of the
linear principal Poisson $\R^\ti$-bundle $P=(\sT^* (L^*)^\ti,\dt\bar{\cJ},\sT \bar{h}^0,h^1)$ (cf. Example \ref{eks0}).
In this case, the $r$-cochains can be identified with homogeneous multivector fields on $(L^*)^\ti$. In the trivial case,
$L=\R\ti\cM$, we deal with a Jacobi structure on $\cM$ and the corresponding chain complex is exactly the complex of
the corresponding Lichnerowicz-Jacobi cohomology (cf. \cite{GM2,GL,LMP0,LMP,LLMP,Li}).
\end{example}

\section{Contact 2-manifolds}
Following the ideas of Roytenberg \cite{Roy0} in the symplectic case, let us consider contact 2-manifolds,
i.e. 2-manifolds equipped with a contact structure of weight 2. According to our general philosophy, we will
prefer an alternative definition (cf. Theorem \ref{t01a}).
\begin{theorem} A contact 2-manifold is canonically associated with a symplectic principal $\R^\ti$-bundle $(P,\zw,h^0,h^1)$ of degree 2.
\end{theorem}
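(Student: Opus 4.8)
The plan is to obtain this as the specialization of Theorem \ref{t01a} to $n=1$ and $k=2$, the only genuine extra point being to check that the parity-respecting condition defining a $2$-manifold survives the correspondence $C\mapsto C^\ti$. First I would unravel the definitions. A contact $2$-manifold is a $2$-manifold $\cM$, that is, an $\N$-graded ($n=1$) manifold of degree $2$ whose grading represents the parity, equipped with a contact structure $C$ of weight $2$. Since the total weight $2$ is even, the local contact forms generating $C$ are even, so $C$ is an even line subbundle of $\sT^\ast\cM$ of weight $2$, and we are in the even case throughout.

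Next I would apply Theorem \ref{t01a} with $n=1$ and $k=2$: the assignment $C\mapsto C^\ti$ is a bijection between contact structures of weight $2$ on $\cM$ and $1$-graded symplectic principal $\R^\ti$-bundles $(P,\zw,h^0,h^1)$ of weight $2$ over $\cM$. Concretely $P=C^\ti\subset\sT^\ast\cM$, the action $h^0$ is the restriction to $C^\ti$ of the fibrewise $\R^\ti$-scaling of $\sT^\ast\cM$, the structure $h^1$ is the restriction of the phase lift $\sT^\ast(2)\bar{h}^1$ of the homogeneity structure $\bar{h}^1$ of $\cM$ (cf.\ (\ref{plift})), and $\zw$ is the restriction of the canonical symplectic form $\zw_\cM$, which is $\R^\ti$-homogeneous of degree $1$ and of weight $2$ with respect to $h^1$. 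The contact form is recovered canonically as $\wt{\za}=i_{\zD^0}\zw$, where $\zD^0$ is the Euler vector field of $h^0$, so that no choices enter and the correspondence is reversible.

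The one step requiring genuine verification — and hence the only real obstacle — is to confirm that $(P,\zw,h^0,h^1)$ is a symplectic principal $\R^\ti$-bundle of \emph{degree} $2$ in the strict sense of Definition \ref{d1}, i.e.\ that its $\N$-grading again represents the parity (Theorem \ref{t01a} by itself only delivers weight $2$, with parity \emph{a priori} unrelated to the grading). I would check this in the homogeneous coordinates of Theorem \ref{t2}: a base coordinate $x^a$ of weight $w_a\in\{0,1,2\}$ has parity $(-1)^{w_a}$ because $\cM$ is a $2$-manifold, while under $\sT^\ast(2)\bar{h}^1$ its conjugate momentum $p_a$ acquires weight $2-w_a$ yet retains the parity $(-1)^{w_a}$ of $x^a$. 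Since $(-1)^{2-w_a}=(-1)^{w_a}$, the weight and the parity of $p_a$ agree, while the principal fibre coordinate $t$ is even of $h^1$-weight $0$; thus all $h^1$-weights lie in $\{0,1,2\}$ and the grading of $P$ respects the parity. This shows that $P$ is of degree $2$ and upgrades the conclusion of Theorem \ref{t01a} to the asserted form, completing the proof.
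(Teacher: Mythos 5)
Your proposal is correct and follows essentially the same route as the paper, which presents this theorem as an immediate consequence of the general correspondence of Theorem \ref{t01a} specialized to $n=1$, $k=2$, adding only the remark that ``the parity is determined by the weight.'' Your explicit coordinate verification that the $2$-phase lift preserves the parity-respecting property (via $(-1)^{2-w_a}=(-1)^{w_a}$ and the weight-$0$ even fibre coordinate $t$) just spells out the detail the paper compresses into that single clause.
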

\noindent In the above, $h^0$ represents the free $\R^\ti$-action on the principal bundle $\zt_0:P\to P_0$ and
$h$ is the homogeneity structure associated with an h-complete weight vector field $\bar{\zD})$. The
symplectic form $\zw$ is 1-homogeneous with respect to $h^0$ and is of weight 2 with respect to $\bar{\zD}$,
thus even, as in this case the parity is determined by the weight.

It is well known \cite{Vor1} that with a graded manifold, in particular a 2-manifold $\cM$, we can associate a tower of fibrations
$$\cM=\cM_{(2)}\ra\cM_{(1)}\ra\cM_{(0)}\,,$$
corresponding to the filtration $\A_{(0)}(\cM)\subset\A_{(1)}(\cM)\subset\A_{(2)}=\A(\cM)$ of the polynomial
algebra on $\cM$, where $\A_{(i)}$ is the subalgebra in $\A$ generated by polynomial functions of weight $\le
i$, $i=0,1,2$. Hence, $\cM_{(i)}$ is an $i$-manifold. In particular, $\cM_{(0)}$ is an even manifold and
$\cM_{(1)}=F[1]$ is a vector bundle over $\cM_{(0)}$ with odd fibers. In the case of an $\R^\ti$-principal
2-manifold, the fibrations intertwine the $\R^\ti$-action, so $\cM_{(0)}$ and $\cM_{(1)}$ are also principal
$\R^\ti$-bundles and $\cM\ra\cM_{(1)}\ra\cM_{(0)}$ is a morphism of principal $\R^\ti$-bundles. If,
additionally, $(\cM,\zw)$ is a symplectic $\R^\ti$-principal 2-manifold, the bundle $F[1]$ is a
pseudo-Euclidean with a pseudo-scalar product $\la\cdot,\cdot\ran$ and $(\cM,\zw)$ is the pullback of the
affine fibration
$$\sT^\ast[2]F[1]\to \left(F\oplus_{\cM_{(0)}}F^*\right)[1]\,,$$
equipped with the pulled-back canonical symplectic form, with respect to the isometric bundle embedding
$F[1]\ra(F\oplus_{\cM_{(0)}} F^\ast)[1]$ associated with $\la\cdot,\cdot\ran$. The pseudo-Euclidean product
induces a Poisson structure of on the principal $\R^\ti$-bundle $\cM_{(1)}=F[1]$ which makes it into a linear
odd principal Poisson $\R^\ti$-bundle, as the pseudo-Euclidean product is an $\R^\ti$-homogeneous linear odd
Poisson bracket. Such linear odd principal Poisson $\R^\ti$-bundles we will call {\it pseudo-Euclidean
principal $\R^\ti$-bundles}.

\begin{example}\label{ex2}
Let $(\uP,\bar{h}^0,\bar{h}^1)$ be a linear principal $\R^\ti$-bundle with the vector bundle structure
$\bar{\zt}_1:\uP\to\uP_1$. According to Theorem \ref{nlcs}, the cotangent bundle $P=\sT^*\uP$ equipped with
the cotangent lifts of the homogeneity structures $h^i=\sT^*\bar{h}^i$, $i=0,1$, and the canonical homogeneity
structure $h^2$ of the vector bundle structure $\pi_\uP:\sT^*\uP\to\uP$ is a contact 2-vector bundle with
respect to the canonical symplectic form $\zw=\zw_\uP$. Since $\zw$ is 1-homogeneous with respect tu $h^i$,
$i=0,1,2$, we can produce a 1-graded contact manifold of weight 2, $(P,\zw,h^0,h)$, out of it by composing the
two vector bundle homogeneity structures, $h=h^1\circ h^2$. Since the parity should be determined by the
weight, $\uP_1$ should be a purely even principal $\R^\ti$-bundle over a purely even manifold $M$. As the
linear principal $\R^\ti$-bundle $\uP$ can be written as the product $\uP=\uP_1\ti_M\uP_0$, where $\uP_0\to M$
is a vector bundle with odd linear functions, so that $\uP_0=F_0[1]$ should be the weight shift of a purely even
vector bundle $\zz:F_0\to M$.  Therefore
\be\label{pom}\uP=\uP_1\ti_MF_0[1]
\ee
and finally,
\be\label{e2} P=\sT^*[2]\left(\uP_1\ti_MF_0[1]\right)=\sT^*[2]\uP_1\ti_{\sT^*[2]M}\sT^*[2]F_0[1]\,,
\ee
so that $P_{(0)}=\uP_1$ and $P_{(1)}=E[1]=\uP_1\ti_M(F_0\oplus_M F_0^*)[1]$ is a vector bundle over $\uP_1$. For
trivial bundles $\uP_1=\R^\ti\ti M$ and $F_0=V\ti M$ with coordinates $(u,x^a)$ and $(\zx^i,x^a)$, thus locally
in general, we can write
$$P=\sT^*[2]\R^\ti\ti\sT^*[2]M\ti\sT^*[2]V\,.$$
Denoting the corresponding coordinates as $(u,z,x^a,p_a,\zx^i,\zh_i)$, $t\ne 0$, where $u,x^a$ are of weight
0, $\zx_i,\zh^i$ are of weight 1, and $z,p_a$ are of weight 2, we can write the symplectic form (which is of
weight 2) as
$$\zw=\xd z\xd u+\xd p_a\xd x^a+\xd\zh_i\xd\zx^i$$
and the corresponding Poisson tensor as
\be\label{pt} \cJ=\pa_z\pa_u+\pa_{p_a}\pa_{x^a}+\pa{\zh_i}\pa_{\zx^i}\,.
\ee

The pseudo-Euclidean structure on the bundle $E[1]$ is represented by the form $\xd\zh_i\xd\zx^i$. The
$\R^\ti$-action is the phase lift of the action on $\R^\ti$ by translations, so that
$$h^0_{t'}(u,z,x^a,p_a,\zx^i,\zh_i)=(t'u,z,x^a,t'p_a,\zx^i,t'\zh_i)\,.$$
In the coordinates $(t,z,x^a,\mathfrak{p}_a,\zx^i,\wt{\zh}_i)$, where $t=-u$, $u\mathfrak{p}_a=p_a$ and
$u\wt{\zh}_i={\zh}_i$, the action looks simpler,
$$h^0_{t'}(t,z,x^a,\mathfrak{p}_a,\zx^i,\wt{\zh}_i)=(t't,z,x^a,\mathfrak{p}_a,\zx^i,\wt{\zh}_i)\,,$$
but the form of $\zw$ is more sophisticated,
$$\zw=\xd t\xd z-t\left(\xd \mathfrak{p}_a\xd x^a+\xd\wt{\zh}_i\xd\zx^i\right)-
\xd t\left(\mathfrak{p}_a\xd x^a+\wt{\zh}_i\xd\zx^i\right)=\xd\left(t\left(\xd z-\mathfrak{p}_a\xd
x^a-\wt{\zh}_i\xd\zx^i\right)\right)\,,$$ and the $\R^\ti$-homogeneous pseudo-Euclidean form on
$E[1]=\R^\ti\ti_M\ti V[1]\ti V^*[1]$ is represented by the Poisson tensor
$-\frac{1}{t}\pa_{\wt{\zh}_i}\pa_{\zx^i}$ of weight -1 with respect to the vector field $t\pa_t$. On the other
hand, $\zw$ is manifestly associated with the contact form $\za=\xd z-\mathfrak{p}_a\xd
x^a-\wt{\zh}_i\xd\zx^i$ of weight 2 on
$$\sJ^1\left(\uP_1^e\ti_MF_0[1]\right)=\R[2]\ti M\ti V[1]\ti V^*[1]\,.$$
\end{example}
The above example shows that, in general, if $F$ is a purely even linear principal $\R^\ti$-bundle over $\cM$
with the linear $\R^\ti$ action $h^0$, then $F\oplus_\cM F^*$ is canonically a pseudo-Euclidean principal
$\R^\ti$-bundle with respect to action $\wh{h}^0_t=h^0_t\oplus t.\left(h^0_{t^{-1}}\right)^*$ and the
pseudo-Euclidean product induced from the canonical pairing between $F$ and $F^*$,
$$\la X+\zm,Y+\zn\ran =\frac{1}{2}\left( i_X\zn+i_Y\zm\right)\,.$$
This structure can be viewed as a reductions of the symplectic principal $\R^\ti$-bundle of degree 2
$\sT^*[2]F[1]$. If $F$ is a pseudo-Euclidean principal $R^\ti$-bundle, then the isometric bundle embedding
$$F[1]\to (F\oplus_\cM F^*)[1]\,,\quad X\mapsto X+\la X,\cdot\ran\,,$$
is simultaneously a morphism of linear principal $\R^\ti$-bundles. We then let $P$ be the symplectic principal
bundle of degree 2 being the pull-back of $\sT^*[2]F[1]$, i.e. completing the commutative diagram of morphisms
of linear principal Poisson $\R^\ti$-bundles
\be\label{c2m}\xymatrix{
P\ar[rr] \ar[d] &&
\sT^*[2]F[1]\ar[d] \\
F[1]\ar[rr] && (F\oplus_\cM F^*)[1] }
\ee
This leads to the following `contact variant' of the Roytenberg's result \cite[Theorem 3.3]{Roy}.
\begin{theorem}\label{cN2} Contact 2-manifolds are in one-to-one correspondence with pseudo-Euclidean principal $\R^\ti$-bundles.
The correspondence is given by the above construction.
\end{theorem}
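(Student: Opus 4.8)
The plan is to construct the correspondence in both directions and then check that the two assignments are mutually inverse, following Roytenberg's treatment of symplectic $2$-manifolds \cite[Theorem 3.3]{Roy} while carrying the principal $\R^\ti$-action along at every stage. Adopting the preferred description, a contact $2$-manifold is a symplectic principal $\R^\ti$-bundle $(P,\zw,h^0,\bar h)$ of degree $2$. Viewing $P$ as a graded $2$-manifold and applying the filtration recalled above yields the tower $P=P_{(2)}\to P_{(1)}\to P_{(0)}$ in which, by $\R^\ti$-equivariance of the fibrations, each $P_{(i)}$ is again a principal $\R^\ti$-bundle and $P_{(1)}=F[1]$ is a vector bundle with odd fibres of weight $1$. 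This tower, together with the local normal form of Example \ref{ex2}, is the backbone of the argument.

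First I would extract the pseudo-Euclidean datum. Since the parity is fixed by the weight, weight-$1$ functions are odd while the even form $\zw$ has weight $2$, so the associated Poisson bracket $\{\cdot,\cdot\}_\zw$ is even of weight $-2$ and $(-1)$-homogeneous with respect to $h^0$. Its restriction to weight-$1$ functions---sections of $F^*$---lands in weight-$0$ functions; because both arguments are odd, the graded antisymmetry of an even bracket degenerates into symmetry, $\{f,g\}_\zw=\{g,f\}_\zw$, so the restriction defines a symmetric, $\R^\ti$-homogeneous fibre pairing $\la\cdot,\cdot\ran$ on $F$. Non-degeneracy of $\la\cdot,\cdot\ran$ is forced by that of $\zw$: a weight-$1$ function annihilated by the restricted bracket would, together with the conjugacy of the weight-$2$ and weight-$0$ generators, yield a nonzero vector in the kernel of $\zw$. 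Thus $F[1]$ becomes a linear odd principal Poisson $\R^\ti$-bundle whose bracket is $\la\cdot,\cdot\ran$, that is, a pseudo-Euclidean principal $\R^\ti$-bundle.

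For the reverse direction I would run the construction displayed in diagram (\ref{c2m}). Given a pseudo-Euclidean principal $\R^\ti$-bundle $F$, the isometric embedding $F[1]\to(F\oplus_\cM F^*)[1]$, $X\mapsto X+\la X,\cdot\ran$, is simultaneously a morphism of linear principal $\R^\ti$-bundles; pulling back the canonical fibration $\sT^\ast[2]F[1]\to(F\oplus_\cM F^*)[1]$ along it defines $P$ as the pullback completing (\ref{c2m}). By Example \ref{ex2} the cotangent model $\sT^\ast[2]F[1]$ is a symplectic principal $\R^\ti$-bundle of degree $2$, its form being homogeneous of weight $2$ and $1$-homogeneous for the $\R^\ti$-action; since the pullback preserves both homogeneities and the symplectic structure restricts non-degenerately to the image, $P$ is again a symplectic principal $\R^\ti$-bundle of degree $2$, hence a contact $2$-manifold. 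In the trivialising coordinates $(t,z,x^a,\mathfrak{p}_a,\zx^i,\wt{\zh}_i)$ of Example \ref{ex2} one reads off $\zw=\xd\!\left(t\left(\xd z-\mathfrak{p}_a\xd x^a-\wt{\zh}_i\xd\zx^i\right)\right)$, the block $\xd\wt{\zh}_i\xd\zx^i$ encoding precisely the pseudo-Euclidean product.

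The crux, and the step I expect to be the main obstacle, is that the two assignments are mutually inverse, which reduces to the claim that a degree-$2$ symplectic structure is canonically and completely determined by its restriction to the weight-$\le 1$ data together with $\la\cdot,\cdot\ran$. Locally this is exactly the Darboux normal form of Example \ref{ex2}: non-degeneracy forces each weight-$2$ momentum to be symplectically conjugate to a weight-$0$ coordinate, producing the $\sT^\ast[2]$-factor over $P_{(0)}$, while the weight-$1$ block is rigidly pinned down by the pairing. What genuinely remains is a gluing problem---verifying that these local normal forms patch $\R^\ti$-equivariantly, so that the transition cocycle of the reconstructed $P$ agrees with the cocycle of $F$ twisted by its pseudo-Euclidean product---and the reverse check that the extraction of the second paragraph applied to the pullback returns $F$ with its original pairing. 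Once the normal form is in hand these are routine; the sole subtlety is the consistent patching of the weight-$2$ generators across charts, and this is guaranteed by the universal property of the pullback, the isometric embedding into $(F\oplus_\cM F^*)[1]$ ensuring that the affine ambiguity in the weight-$2$ coordinates is resolved compatibly.
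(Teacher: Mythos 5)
Your proposal is correct and follows essentially the same route as the paper: the paper's (largely implicit) proof consists precisely of the forward extraction of the pseudo-Euclidean structure on $P_{(1)}=F[1]$ from the tower of fibrations and the degree $(-2)$, $\R^\ti$-homogeneous Poisson bracket restricted to weight-$1$ functions, together with the reverse pullback construction of diagram (\ref{c2m}), both adapted from Roytenberg's \cite[Theorem 3.3]{Roy} by carrying the $\R^\ti$-action along. Your final paragraph on mutual inverses via the local normal form of Example \ref{ex2} makes explicit a verification the paper leaves to the reader, and is consistent with its intent.
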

\begin{example}
A canonical example of a contact 2-vector bundle is $P=\sT^*[2]\sT[1](L^*)^\ti$ for a line bundle $L$ over $M$ with the diagram of vector and principal $\R^\ti$-bundle morphisms

\be\label{bigdiagram}
\xymatrix{
               &  \dts[1](L^*)^\ti \ar[dd]\ar[dl]   &                     &  \dts[2]\sT[1](L^*)^\ti \ar[ll] \ar[dd] \ar[dl]
  \\
\sJ^1L[1] \ar[dd] &            & \sJ^1[1]C_L^* \ar[ll]\ar[dd]&
  \\
               &  (L^*)^\ti \ar[ld] &                 &  \sT[1](L^*)^\ti \ar[ll] \ar[ld]
  \\
M        &                   &    \DO^1[1](L,L)\ar[ll] & }
\ee
Here $\sJ^1C_L^*$ is the first jet bundle of the dual line bundle of $C_L\to\sJ^1L$, where $C_L\subset\sT^*\sJ^1L$ is the canonical contact bundle.
\end{example}
\section{Contact Courant algebroids}
Courant algebroids, originally defined in terms of even differential geometry \cite{LWX}, have been recognized
by Roytenberg \cite{Roy0,Roy} as derived brackets on symplectic 2-manifolds.

To be more precise, let us start with a symplectic 2-manifold $(\cM,\zw,h^1)$ with $\cM_{(1)}=F[1]$, so that
functions from $\A^1(\cM)$ (of weight 1) represent linear functions on $F$ and any section $e$ of $F$
represents a linear function $\zi_e$ on $F^*$.

Let $\BB{\cdot}{\cdot}$ be the Poisson bracket associated with the symplectic form $\zw$ of weight 2. The
symplectic Poisson bracket induces a nondegenerate pairing
\be\label{pairing}
\BB{\cdot}{\cdot}:\A^1(\cM)\ot_{\A^0(\cM)}\A^1(\cM)\to \A^0(\cM)
\ee
inducing an identification
$$F\simeq F^*\,, \quad e\mapsto \zi(e)\,,$$
i.e. an identification of the module $\Sec(F)$ of sections of $F$ and the
$\A^0(\cM)=C^\infty(\cM_{(0)})$-module $\A^1(\cM)$ of homogeneous functions of degree 1  by
$e=\BB{\zi(e)}{\cdot}$, and a pseudo-Euclidean product on $F$ defined by
\be\label{pEp}\la e,e'\ran=\BB{\zi(e)}{\zi(e')}\,.
\ee
If $H\in\A^3(\cM)$ is a homological cubic Hamiltonian, $\BB{H}{H}=0$, then the derived bracket,
\be\label{db}\{ \zf_1,\zf_2\}=\BB{\BB{\zf_1}{H}}{\zf_2}\,,
\ee
is even and satisfies the Jacobi identity (\ref{s2}) with $k=0$. Brackets (not necessarily skew-symmetric)
satisfying the Jacobi identity are called {\it Loday} (or {\it Leibniz}) {\it brackets}. Being of degree -2,
the bracket is closed on $\A^1(\cM)$, so that it can be viewed as a Loday bracket, the {\it Courant-Dorfman
bracket}, on sections of the vector bundle $F$ which will be denoted, with some abuse of notation, also
$\{\cdot,\cdot\}$, i.e. $\{ e,e'\}=\{\zi(e),\zi(e')\}$. The bracket $\{ \zi(e),f\}$ between functions
$\zi(e)\in\A^1(\cM)$ and basic functions $f\in\A^0(\cM)=C^\infty(\cM_{(0)})$ is a derivative with respect to
$f$ and corresponds to a vector bundle morphism $\zr:F\to\sT \cM_{(0)}$ (the anchor map). The structures
$\{\cdot,\cdot\}$ and $\zr$, enriched with the pseudo-Euclidean product (\ref{pEp}), form a so called {\it
Courant algebroid} structure on $F$. Moreover, maps $\{\cdot,\cdot\}$, $\zr$, and $\la\cdot,\cdot\ran$  arise
in this way if and only if the following conditions hold true (cf. \cite{GM2}):
\bea\label{mc1}
&\la \{ e,e'\},e'\ran=\la e,\{ e',e'\}\ran\,,\\
&\zr(e)\la e',e'\ran=2\la\{ e,e'\},e'\ran\,,\label{mc2}
\eea
for all section $e,e'$ of $F$. Note that (\ref{mc2}) implies that $\zr$ is the (left) anchor of the
Courant-Dorfman bracket $\{\cdot,\cdot\}$ in the usual sense, i.e.
\be\label{anchor} \{ e,fe'\}=f\{ e,e'\}+\zr(e)(f)e'\,,
\ee
for all sections $e,e'$ of $F$ and all $f\in C^\infty(\cM_{(0)})$. Indeed, the polarization of (\ref{mc2})
gives
\be\label{pol}\zr(e)\la
e',e''\ran=\la\{ e,e'\},e''\ran +\la\{ e,e''\},e'\ran\,.
\ee
Replacing $e'$ in the above with $fe'$ we conclude that
$$\zr(e)(f)\la e',e''\ran=\la\{ e,fe'\},e''\ran\,,
$$
whence (\ref{anchor}), as the product $\la\cdot,\cdot\ran$ is nondegenerate. The anchor property
(\ref{anchor}) implies, in turn, that the anchor map is a bracket homomorphism (cf. \cite{Gr}),
\be\label{hom}
\zr\left(\{ e,e'\}\right)=[\zr(e),\zr(e')]\,,
\ee
where the bracket on the right-hand side is clearly the bracket of vector fields. The latter follows also
directly from the graded Jacobi identity applied to the bracket $\BB{\cdot}{\cdot}$. To write a formula
similar to (\ref{anchor}) with respect to the multiplication by function in the first argument, let us first
polarize (\ref{mc1}) to get
$$\la \{ e,e'\},e''\ran+\la \{ e,e''\},e'\ran=\la e,\{ e',e''\}+\{ e'',e'\}\ran
$$
which, combined with (\ref{pol}), yields
\be\label{pol1}
\zr(e)\la e',e''\ran=\la e,\{ e',e''\}+\{ e'',e'\}\ran\,.
\ee
Defining a derivation $\sD:C^\infty(M)\ra\Sec(F)$ by means of the pseudo-Euclidean product by
\be\label{D}\la\sD(f),e\ran=\zr(e)(f)\,, \ee
we can rewrite (\ref{pol1}) as
\be\label{sym}
\{ e',e''\}+\{ e'',e'\}=\sD\la e',e''\ran\,.
\ee
Replacing $e'$ with $fe'$ in the latter and using (\ref{anchor}), we get
\be\label{anchor2}
\{ fe',e''\}=f\{ e',e''\}-\zr(e'')(f)e'+\la e',e''\ran\sD(f)\,.
\ee

If we start not with a bare symplectic 2-manifold but with a contact 2-manifold, i.e. with a symplectic
principal \ $\R^\ti$-bundle $(P,\zw,h^0,h)$ of degree 2, we can repeat the above construction. Note that in
this case $P$, $P_{(1)}=F[1]$, and $P_{(0)}$ are additionally principal $\R^\ti$-bundles. As our structure is
reacher, e.g. the bracket $\BBP{\cdot}{\cdot}=\BB{\cdot}{\cdot}$ is homogeneous of degree -1 with respect to
the $\R^\ti$-action $h^0$, we can consider the space $\A^{(1,\bullet)}(P)=\oplus_{i=0}^\infty \A^{(1,i)}(P)$
which is closed with respect to this Poisson bracket, where $\A^{(1,i)}(P)$ is the subspace in the space of
functions of weight $i$ on $P$ which are simultaneously 1-homogeneous with respect to $h^0$. In this notation,
elements of $\A^{(1,0)}(P)$ represent homogeneous basic functions, i.e. functions on $P_{(0)}$ which are
1-homogeneous with respect to the reduced $\R^\ti$-action on $P_{(0)}$, and elements of $\A^{(1,1)}(P)$
represent homogeneous sections of $F$.

If we now choose an $\R^\ti$-homogeneous cubic homological Hamiltonian $H\in\A^{(1,3)}(P)$, $\BBP{H}{H}=0$,
the derived bracket (\ref{db}) is of degree $(-1,-1)$. In particular,
$$\{\A^{(1,1)}(P),\A^{(1,1)}(P)\}\subset \A^{(1,1)}(P)\,,$$
so it is closed on homogeneous sections of $F$. This suggests the following definition {\it a la} Roytenberg.
\begin{definition}
A {\it contact Courant algebroid} is a contact 2-manifold $P$ equipped with a an $\R^\ti$-homogeneous cubic
homological Hamiltonian  $H\in\A^{(1,3)}(P)$, $\BBP{H}{H}=0$.
\end{definition}
In this sense a contact Courant algebroid is an $\R^ti$-homogeneous Courant algebroid. To find a more
`classical' description of the contact Courant algebroid, note that the space $\A^{(1,1)}(P)$, as well as
the space $\A^{(1,0)}(P)$ of homogeneous functions on $P_{(0)}$, is an $\A^{(0,0)}(P)$-module, where elements
of $\A^{(0,0)}(P)$ represent functions on the base $M=P_{(0)}/\R^\ti$ of the principal $\R^\ti$-bundle
$P_{(0)}\to M$. Homogeneous functions on $P_{(0)}$ represent, in turn, sections of the line bundle
$L=P_{(0)}^e$. Since
$$\{\A^{(1,1)}(P),\A^{(1,0)}(P)\}\subset \A^{(1,0)}(P)\,,$$
with any homogeneous linear section $e$ of $E$ we can associate the first-order differential operator $\zr^{\!
1}(e)$ from $L$ into $L$ represented by
$$\{ \zi(e),\cdot\}:\A^{(1,0)}(P)\to \A^{(1,0)}(P).$$
Similarly to the decomposition (\ref{pom}) one can prove that $F=P_{(0)}\ti_M F_1$ for a vector bundle $F_1$
over $M$, so that elements of $\A^{(1,1)}(P)$ represent, {\it via} the identification $e\mapsto \zi(e)$,
sections of the vector bundle $\cE(F)=L\ot_MF_1$ and $\zr^{\! 1}$ can be viewed as a vector bundle morphism
\be\label{uu}\zr^{\! 1}:\cE(F)\to\DO^1(L,L)\,.
\ee
As
$$\BBP{\A^{(1,1)}(P)}{\A^{(1,1)}(P)}\subset \A^{(1,0)}(P)\,,$$
the symplectic Poisson bracket induces an $L$-valued pseudo-Euclidean product on $\cE(F)$ defined by
\be\label{pEp1}\la X,Y\ran^{\! 1}=\BBP{\zi(X)}{\zi(Y)}\,.
\ee
The Loday bracket on sections of $\cE(F)$ induced by $\{\cdot,\cdot\}$, the {\it Courant-Jacobi bracket}, we
will denote with $\{\cdot,\cdot\}^{\! 1}$ The graded Jacobi identity for $\BBP{\cdot}{\cdot}$ immediately
implies the identities
\bea\label{mc1a}
&\la \{ X,Y\}^{\! 1},Y\ran^{\! 1}=\la X,\{ Y,Y\}^{\! 1}\ran^{\! 1}\,,\\
&\zr^{\! 1}(X)\la Y,Y\ran^{\! 1}=2\la\{ X,Y\}^{\! 1},Y\ran^{\! 1}\,.\label{mc2a}
\eea this time valid for sections $X,Y$ of $\cE(F)$ and $L$-valued pseudo-Euclidean product
$\la\cdot,\cdot\ran^{\! 1}$ with values in the line bundle $L$. Property (\ref{mc2a}) implies also that
\be\zr^{\! 1}\left(\{ X,Y\}^{\! 1}\right)=[\zr^{\! 1}(X),\zr^{\! 1}(Y)]\,.\label{homa}
\ee
Here, of course, the right-hand side bracket in (\ref{homa}) is the commutator bracket of first-order
differential operators. To prove (\ref{homa}), consider the polarization of (\ref{mc2a}),
\be\label{pol2}\zr^{\! 1}(X)\la
Y,Z\ran^{\! 1}=\la\{ X,Y\}^{\! 1},Z\ran^{\! 1} +\la\{ X,Z\}^{\! 1},Y\ran^{\! 1}\,.
\ee
This implies
\bea\nn\zr^{\! 1}(X')\zr^{\! 1}(X)\la
Y,Z\ran^{\! 1}&=&\la\{ X',\{ X,Y\}^{\! 1}\}^{\! 1},Z\ran^{\! 1} +\la\{ X,Y\}^{\! 1},\{ X',Z\}^{\! 1}\ran^{\! 1} \\
&&+\la\{ X',\{ X,Z\}^{\! 1}\}^{\! 1},Y\ran^{\! 1}+\la\{ X,Z\}^{\! 1},\{ X'Y\}^{\! 1}\ran^{\! 1}\,,\label{pol3}
\eea
so
\bea\nn [\zr^{\! 1}(X'),\zr^{\! 1}(X)]\la
Y,Z\ran^{\! 1}&=&\la\{\{ X', X\}^{\! 1},Y\}^{\! 1},Z\ran^{\! 1} +\la\{ \{ X', X\}^{\! 1},Z\}^{\! 1},Y\ran^{\! 1}\\
&=&\zr^{\! 1}(\{ X',X\}^{\! 1})\la Y,Z\ran^{\! 1}\label{pol4}
\eea
and (\ref{homa}) follows, since $\la\cdot,\cdot,\ran^{\! 1}$ is nondegenerate and $\la Y,Z\ran^{\! 1}$ can be
arbitrary.

Like in the case of a linear principal Poisson $\R^\ti$-bundle, we can consider the linear principal
$\R^\ti$-bundle $E=E(F)=(L^*)^\ti\ti_M\cE(F)$ and the canonical isomorphism $\mathfrak{H}:E(F)\to F^*$ given
by (\ref{recc1}). In particular, any section $\ze$ of $\cE(F)$ defines an invariant section $\bar{\ze}$ of $E$
and, {\it via} the isomorphism $\mathfrak{H}$, a 1-homogeneous section of $F^*$, thus again a function
$\zi_\ze\in\A^{(1,1)}(F)$.

The pseudo-Euclidean product on the vector bundle $F\simeq F^*$ can be reconstructed from the $L$-valued
pseudo-Euclidean product on $\cE(F)$ as the unique one satisfying the formula
\be\label{recon1}\la\wt{\ze},\wt{\ze}'\ran(l^*)=l^*\left(\la\ze,\ze'\ran^{\! 1}\right)\,.
\ee
We can obtain the anchor $\zr:E\to\sT P_{(0)}=(L^*)^\ti$ using a natural identification of first-order
differential operators $D$ from $L$ to $L$ with invariant vector fields $\bar{D}$ in the principal
$\R^\ti$-bundle $(L^*)^\ti$,
\be\label{recon3a} \zr(\wt{\ze})=\overline{\zr^{\! 1}(\ze)}\,.
\ee
and hence the derivation $D:C^\infty(P_{(0)})\to\Sec(F)$ which is defined in terms of $\zr$ and
$\la\cdot,\cdot\ran$. We can also reconstruct the bracket $\{\cdot,\cdot\}$ on sections of $F$. First of all,
\be\label{recon2} \{\wt{\ze},\wt{\ze}'\}=\wt{\{\ze,\ze'\}^{\! 1}}\,.
\ee
The sections of the form $\wt{\ze}$ generate the vector bundle $E$, so the above expression uniquely defines
$\{\cdot,\cdot\}$ in view of identities (\ref{anchor}) and (\ref{anchor2}) which will give
$$\{ f\wt{\ze},g\wt{\ze}'\}=fg\{ \wt{\ze},\wt{\ze}'\}+f\zr(\wt{\ze})(g)e'-g\zr(\wt{\ze'})e +g\ra e,e'\ran \sD(f)\,.$$

\medskip
Conversely, suppose that we have a structure $(\cE,L,\{\cdot,\cdot\}^{\! 1},\la\cdot,\cdot\ran^{\! 1},\zr^{\!
1})$ such that
\begin{itemize}
\item[(a)] $\cE$ is a vector bundle  over $M$ and $L$ is a line bundle over $M$,
\item[(b)] $\{\cdot,\cdot\}^{\! 1}$ is a Loday bracket  on sections of $\cE$,
\item[(c)] $\la\cdot,\cdot\ran^{\! 1}$ is a pseudo-Euclidean product  with values in $L$,
\item[(d)] $\zr^{\! 1}:\cE\ra\DO^1(L,L)$ is a vector bundle morphism  associating to any section $X$ of $\cE$ a
first-order differential operator $\zr^{\! 1}(X)$ from sections of $L$ into sections of $L$,
\item[(e)] identities (\ref{mc1a}) and (\ref{mc2a})
are satisfied for all sections $X,Y$ of $\cE$.
\end{itemize}

Let us define a linear principal $\R^\ti$-bundle $E=(L^*)^\ti\ti_M\cE$, an $\R^\ti$-homogeneous
pseudo-Euclidean product $\la\cdot,\cdot\ran$ on $E$ by (\ref{recon1}), the anchor map $\zr:E\to\sT M$ by
(\ref{recon3}), and a Loday bracket $\{\cdot,\cdot\}$ on sections of $E$ satisfying (\ref{anchor}) and
(\ref{anchor2}) by (\ref{recon2}). It can be checked directly in local coordinates that the constructions are
correct and give us a Courant algebroid structure on $E$. As in this case $E$ is simultaneously a principal
$\R^\ti$-bundle, it is easy to see that all these structures are $\R^\ti$-homogeneous, so the cubic
Hamiltonian on the minimal symplectic realization defining the derived bracket is homogeneous as well. Summing
up, we can propose the following.
\begin{theorem}
There is a one-to-one correspondence between contact Courant algebroids and structures
$(\cE,L,\{\cdot,\cdot\}^{\! 1},\la\cdot,\cdot\ran^{\! 1},\zr^{\! 1})$ described by items (a)-(d) above.
\end{theorem}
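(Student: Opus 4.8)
The plan is to deduce the correspondence from Roytenberg's symplectic result \cite[Theorem 3.3]{Roy} (whose contact analog is already recorded as Theorem~\ref{cN2}) by performing an $\R^\ti$-reduction in exactly the spirit in which Theorem~\ref{thmain2} trades a linear principal Poisson $\R^\ti$-bundle for an invariant Lie algebroid. Conceptually, a contact Courant algebroid is nothing but an $\R^\ti$-\emph{homogeneous} Courant algebroid on the symplectic $2$-manifold $(P,\zw,h)$ underlying $(P,\zw,h^0,h)$, and the classical data $(\cE,L,\{\cdot,\cdot\}^{\! 1},\la\cdot,\cdot\ran^{\! 1},\zr^{\! 1})$ of (a)--(e) is its reduction modulo the principal action $h^0$. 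Most of the forward passage is already assembled in the paragraphs preceding the statement; what remains is to organize the two directions and to verify that they are mutually inverse.

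For the forward direction I would first forget $h^0$: then $(P,\zw,h)$ is a symplectic $2$-manifold and $H\in\A^{(1,3)}(P)$ is in particular a cubic homological Hamiltonian, so Roytenberg's construction produces a Courant algebroid $(\{\cdot,\cdot\},\la\cdot,\cdot\ran,\zr)$ on $F$, where $P_{(1)}=F[1]$. The extra $h^0$-homogeneity — the Poisson bracket $\BBP{\cdot}{\cdot}$ is $h^0$-homogeneous of degree $-1$ — forces every structure map to be $\R^\ti$-homogeneous, so the reduction is available: using $\A^{(1,1)}(P)\simeq\Sec(\cE(F))$ with $\cE(F)=L\ot_M F_1$ and $\A^{(1,0)}(P)\simeq\Sec(L)$, the restrictions of $\{\cdot,\cdot\}$ and $\BBP{\cdot}{\cdot}$, together with $\zr^{\! 1}(e)=\{\zi(e),\cdot\}$ acting on $\A^{(1,0)}(P)$, define $\{\cdot,\cdot\}^{\! 1}$, $\la\cdot,\cdot\ran^{\! 1}$, and $\zr^{\! 1}:\cE(F)\to\DO^1(L,L)$. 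The identities \eqref{mc1a} and \eqref{mc2a} then drop out of the graded Jacobi identity for $\BBP{\cdot}{\cdot}$ by exactly the polarization computation \eqref{pol2}--\eqref{pol4} already carried out, so (a)--(e) hold.

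For the converse I would reverse this. Given abstract data satisfying (a)--(e), form the linear principal $\R^\ti$-bundle $E=(L^\ast)^\ti\ti_M\cE$ and transport everything along the canonical isomorphism $\mathfrak{H}:E\to F^\ast$ of \eqref{recc1}: define the pseudo-Euclidean product on $F\simeq F^\ast$ by \eqref{recon1}, the anchor $\zr$ by \eqref{recon3} via the identification of first-order operators with invariant vector fields on $(L^\ast)^\ti$, and the bracket on the generating invariant sections by $\{\wt{\ze},\wt{\ze}'\}=\wt{\{\ze,\ze'\}^{\! 1}}$ as in \eqref{recon2}, extending it to all sections through the Leibniz rules \eqref{anchor} and \eqref{anchor2}. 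The hard part is precisely here: one must check that this prescription is well posed — that \eqref{anchor} and \eqref{anchor2} are mutually consistent once $\sD$ is built from $\zr$ and $\la\cdot,\cdot\ran$ — and that the resulting bracket is a genuine Loday bracket obeying the Courant axioms \eqref{mc1}--\eqref{mc2}. Because invariant sections generate, this reduces by bilinearity and homogeneity to the given identities \eqref{mc1a}--\eqref{mc2a} and \eqref{homa}; I expect it to be a routine but unavoidable computation in local coordinates, the one the text signals by ``checked directly in local coordinates''. Once this is settled, $(\{\cdot,\cdot\},\la\cdot,\cdot\ran,\zr)$ is an $\R^\ti$-homogeneous Courant algebroid on $F$, so Roytenberg's theorem furnishes on its minimal symplectic realization $\sT^\ast[2]F[1]\simeq P$ a cubic homological Hamiltonian $H$ which, by homogeneity of all the data, lies in $\A^{(1,3)}(P)$ and hence defines a contact Courant algebroid.

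Finally I would verify that the two passages are inverse. In one direction this is immediate, since an $h^0$-homogeneous derived bracket of bi-degree $(-1,-1)$ is completely determined by its restriction to $\A^{(1,1)}(P)$ (as already observed for the Kirillov-algebroid bracket), so rebuilding $H$ from the reduced data returns the original Hamiltonian. In the other direction the abstract data is recovered by passing to constant (invariant) sections $\wt{\ze}\mapsto\ze$, because the reconstruction formulas \eqref{recon1}, \eqref{recon2}, \eqref{recon3} are by construction the inverses of the reduction used in the forward direction. This closes the correspondence.
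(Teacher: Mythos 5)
Your proposal is correct and takes essentially the same route as the paper: the forward direction restricts the $h^0$-homogeneous derived bracket to $\A^{(1,1)}(P)$ and $\A^{(1,0)}(P)$, obtaining (\ref{mc1a})--(\ref{mc2a}) from the graded Jacobi identity by polarization, while the converse reconstructs an invariant Courant algebroid on $E=(L^*)^\ti\ti_M\cE$ via (\ref{recon1})--(\ref{recon3}), verifies correctness in local coordinates, and then takes the homogeneous cubic Hamiltonian on the minimal symplectic realization. Your only addition is the explicit check that the two passages are mutually inverse, which the paper leaves implicit in the observation that a bracket of bi-degree $(-1,-1)$ is determined by its restriction to $\A^{(1,1)}(P)$.
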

\begin{remark} If the line bundle is trivial, $L=\R\ti M$, the above
description of Courant-Jacobi algebroids coincides with the one in \cite[Definition 1]{GM2} if we take into
account that identity (\ref{homa}) follows from (\ref{mc2a}), so is superfluous in the definition.
\end{remark}

\begin{example}
Consider a contact Courant algebroid $(\cE,L,\{\cdot,\cdot\},\la\cdot,\cdot\ran,\zr)$ defined as above. We
can always find a local trivializations of $L$ and $\cE$ with associated local affine coordinates $(x^a,z)$
and $(x^a,\zvy^i)$, respectively. The pseudo-Euclidean product with values in $L$ is in the local
trivialization of $L$ a standard product with values in $\R$. Moreover, a basis $(\ze_i)$ of local sections of
$\cE$ can always be chose such that $\la\ze_i,\ze_j\ran=g_{ij}$ and the basic functions $g_{ij}$ are
constants. Since by construction $\zvy^i(\ze_j)=\zd^i_j$, our identification of sections with linear functions
{\it via} the pseudo-Euclidean product yields $\ze_i=g_{ij}\zvy^j$. Let us write in local coordinates
$$\zr(e_i)=r^a_i(x)\pa_{x^a}+r_i(x)$$
and
$$\Theta(\ze_i,\ze_j,\ze_k)=\la\{ \ze_i,\ze_j\},\ze_k\ran=A_{ijk}(x)\,,$$
Since the product $\la\cdot,\cdot\ran$ is nondegenerate, the above formula defines the brackets $\{
\ze_i,\ze_j\}$ uniquely. Note that $A_{ijk}(x)$ i totally skew-symmetric in $(i,j,k)$. Indeed, combining
(\ref{mc1a}) and (\ref{mc2a}), we get
\be\label{pol1a}
\zr(\ze_k)\la \ze_i,\ze_j\ran=\la \ze_k,\{ \ze_i,\ze_j\}+\{ \ze_j,\ze_j\}\ran=A_{ijk}(x)+A_{jik}(x)\,.
\ee
Since $\la \ze_i,\ze_j\ran$ is constant, the left-hand side is 0, thus $A_{ijk}(x)$ is skew-symmetric with
respect to the first two arguments. Similarly, (\ref{pol2}) gives
$$\zr(\ze_i)\la
\ze_j,\ze_k\ran=\la\{ \ze_i,\ze_j\},\ze_k\ran +\la\{ \ze_i,\ze_k\},\ze_j\ran=A_{ijk}(x)+A_{ikj}(x)=0\,,
$$
so $A_{ijk}(x)$ is skew-symmetric also with respect to the last two arguments, hence totally skew-symmetric.

The linear principal $\R^\ti$-bundle $E=(L^*)^\ti\ti_M\cE$ has local coordinates $(t,x^a,\zvy^i)$, $t\ne 0$,
and the $\R^\ti$-action reduces to the regular action on the first argument,
$h^0_s(t,x^a,\zvy^i)=(st,x^a,\zvy^i)$. Homogeneous sections $e_i=\wt{\ze}_i$ of $E$ read $e_i(t,x)=t\ze_i(x)$
and correspond to linear homogeneous functions $tg_{ij}\zvy^j$. The pseudo-Euclidean product is determined by
$$\la e_i,e_j\ran=tg_{ij}$$
and corresponds to the odd Poisson structure
$$\zL=-\frac{1}{2t}\sum_{i,j}g^{ij}\pa_{\zvy^i}\pa_{\zvy^j}$$
on $E$. Here, of course $(g^{ij})=(g_{ij})^{-1}$. The minimal symplectic realization $P$ is a symplectic
principal $\R^\ti$ bundle of degree 2. In local Darboux coordinates $(t,x^a,\zvy^i,z,p_a)$ the symplectic form
reads as
\be\label{sf1} \zw=\xd z\xd t + \xd p_a\xd x^a+\frac{t}{2}g_{ij}\xd\zvy^i\xd\zvy^j\,.
\ee
Note that, among local coordinates, $t,x^a$ are of weight 0 thus even, $\zvy^i$ are of weight 1 thus odd, and
$z,p_a$ are of weight thus even, so that $\zw$ is of weight 2 and even. Moreover, with respect to the
$\R^\ti$-action, $x^a,\zvy^i, z$ are invariant, and $t,p_a$ are homogeneous of degree 1, so that $\zw$ is
homogeneous of degree 1. The corresponding Poisson tensor $\cJ=\zw^{-1}$ of weight 2 takes the form (cf.
\ref{lfe2})
\be\label{lfe2a}{\cJ}=\pa_t\pa_z+\pa_{x^a}\pa_{p_a}-\frac{1}{2t}g^{ij}\pa_{\zvy^i}\pa_{\zvy^j}\,.\ee
If we use invariant coordinates $\mathfrak{p}_a$ instead of $p_a=t\mathfrak{p}_a$, the Poisson tensor reads
(cf. \ref{lfe2})
\be\label{lfe2b}{\cJ}=\pa_t\pa_z+\frac{1}{t}\left[\pa_z\left(\mathfrak{p}_a\pa_{\mathfrak{p}_a}+\zvy^j\pa_{\zvy^j}\right)+
\pa_{x^a}\pa_{\mathfrak{p}_a}-\frac{1}{2}g^{ij}\pa_{\zvy^i}\pa_{\zvy^j}\right]\,,\ee

The homogeneous Courant-Dorfman bracket on sections of $E$ satisfies clearly
\be\label{cdn}
\la\{ e_i,e_j\},e_k\ran(t,x)=tA_{ijk}(x)
\ee
and its anchor map, $\zr:E\to(L^*)^\ti$,
\be\label{cdn1} \zr(e_i)=r_i^a(x)\pa_{x^a}+r_i(x)t\pa_t\,.
\ee
All these data imply that the the corresponding homogeneous cubic Hamiltonian must be of the form
\be\label{cH} H=\zvy^i\left(r^a_i(x)p_a+r_i(x)tz\right)-\frac{t}{6}A_{ijk}(x)\zvy^i\zvy^j\zvy^k\,.
\ee
indeed, since $\BBP{e_i}{\zvy^j}=\zd^j_i$, we have
$$\BBP{e_i}{H}=r_i^a(x)p_a+r_i(x)tz-\frac{t}{2}A_{ijk}(x)\zvy^j\zvy^k\,,$$
so that
$$\la\{ e_i,e_j\},e_k\ran=\BBP{\BBP{\BBP{e_i}{H}}{e_j}}{e_k}=tA_{ijk}(x)$$
and
$$\zr(e_i)(f)=\BBP{\BBP{e_i}{H}}{f}=r_i^a(x)\frac{\pa f}{\pa x^a}+r_i(x)t\frac{\pa f}{\pa t}
$$
for any basic function $f=f(t,x)$. Of course, the Jacobi identity for the derived bracket $\{\cdot,\cdot\}$,
together with (\ref{mc1}) and (\ref{mc2}), is equivalent with the homological condition $\BBP{H}{H}=0$.
\end{example}
\begin{example}
Consider the contact 2-manifold $\sT^*[2]\sT[1](\R^\ti\ti M)$ for a purely even manifold $M$, like in Example
\ref{eks1}. As the cubic Hamiltonian $H$ associated with the canonical vector field on $\sT[1](\R\ti M)$ being the de Rham derivative is 1-homogeneous, we obtain a homogeneous Courant bracket on the linear principal $\R^\ti$-bundle $F=\sT(\R^\ti\ti M)\oplus_{\R^\ti\ti M}\sT^*(\R^\ti\ti M)$. As we can interpret $\cE(F)$ as the bundle $\cE=(\R\ti\sT M)\oplus_M(\R^*\ti\sT^*M)$ whose sections are $(X,f)+(\za,g)$, where $f,g\in\C^\infty(M)$, $X$ is a vector field, and $\za$ is a one-form on $M$, the contact Courant algebroid structure on $E$ consists of
\begin{itemize}
\item[(a)] the Loday bracket  on sections of $\cE$ of the form
\bea\label{edirac}
\{(X_1,f_1)+(\za_1,g_1),(X_2,f_2)+(\za_2,g_2)\}^1=
\left([X_1,X_2],X_1(f_2)-X_2(f_1)\right)&&\\
+\left(\Li_{X_1}\za_2-i_{X_2}\xd\za_1+f_1\za_2- f_2\za_1
+f_2\xd g_1+g_2\xd f_1,
X_1(g_2)-X_2(g_1)+i_{X_2}\za_1+f_1g_2\right)\,,&&\nn
\eea

\item[(b)] the pseudo-Euclidean product of the form
$$\la(X,f)+(\za,g),(X,f)+(\za,g)\ran^1=\la X,\za\ran+fg\,,$$

\item[(c)] the vector bundle morphism $\zr^{\! 1}:\cE\ra\DO^1(M)$ of the form
$$\zr^{\! 1}\left((X,f)+(\za,g)\right)=X+f\,.$$
\end{itemize}
This is the Dorfman form of the bracket whose skew-symmetrization gives exactly the bracket introduced by Wade \cite{Wa} to define so called {\it $\cE^1(M)$-Dirac structures}.
\end{example}

\bigskip
\noindent \noindent Janusz Grabowski\\Institute of Mathematics, Polish Academy of Sciences\\\'Sniadeckich 8,
P.O. Box 21, 00-956 Warszawa,
Poland\\{\tt jagrab@impan.pl}\\\\


\begin{thebibliography}{22}
\bibitem{AKSZ}\vskip-.2cm  M.~Alexandrov, M.~Kontsevich, A.~Schwarz, and O.~Zaboronsky:
The geometry of the {Master} equation and topological quantum field theory, {\em Int. J. Modern Phys. A} {\bf
12(7)} (1997), 1405--1429.

\bibitem{AL-G}\vskip-.2cm  P.~Antunes, C.~Laurent-Gengoux: Jacobi structures in supergeometric formalism,
{\em J. Geom. Phys.} {\bf 61} (2011), 2254-–2266.

\bibitem{BV}\vskip-.2cm I.~Batalin and G.~Vilkovisky: Gauge algebra and quantization. {\em Phys. Lett. B} {\bf 102} (1981), 27--31.

\bibitem{BBH}\vskip-.2cm  C.~Bartocci, U.~Bruzzo, and  D.~Hern\'andez Ruip\'erez: The geometry of supermanifolds.
in {\em Mathematics its Applications} {\bf 71}, Kluwer Academic Publishers Group, Dordrecht, 1991.

\bibitem{Bru}\vskip-.2cm  A.~J.~Bruce: Odd Jacobi manifolds: general theory and applications to generalised Lie algebroids, {\em Extracta Math.} {\bf 27} (2012), 91--123.


\bibitem{Bru2}\vskip-.2cm  A.~J.~Bruce: Contact structures and supersymmetric mechanics, {\em arXiv:1108.5291}.

\bibitem{Bru3}\vskip-.2cm  A.~J.~Bruce: Higher contact-like structures and supersymmetry, {\em arXiv:1201.4289}.

\bibitem{Bruz}\vskip-.2cm  U.~Bruzzo and R.~Cianci: Differential equations, Frobenius theorem and local flows on
supermanifolds, {\em J. Phys. A: Math. Gen.} {\bf 18} (1985), 417--423.

\bibitem{Co}\vskip-.2cm T.~J.~Courant: Dirac manifolds, {\em Trans. Amer. Math. Soc.} {\bf 319} (1990),
631--661.

\bibitem{CW}\vskip-.2cm T.~J.~Courant and A.~Weinstein: Beyond Poisson structures,
{\em Seminaire sud-rhodanien de geometrie VIII. Travaux en Cours} {\bf 27}, Hermann,
Paris (1988), 39--49.

\bibitem{DLM}\vskip-.2cm  P.~Dazord, A.~Lichnerowicz, Ch.~M.~Marle:
Structure locale des vari\'et\'es de Jacobi, {\em J. Math. Pures et Appl.}, {\bf 70} (1991), 101--152.

\bibitem{DM}\vskip-.2cm
P.~Deligne and J.~W. Morgan: Notes on supersymmetry (following {J}oseph {B}ernstein), In {\em Quantum fields
and strings: a course for mathematicians, Vol. 1, 2 (Princeton, NJ, 1996/1997)}, pages 41--97. Amer. Math.
Soc., Providence, RI, 1999.

\bibitem{Do}\vskip-.2cm I.~Ya.~Dorfman: Dirac structures of integrable evolution equations, {\em Phys. Lett.
A} {\bf 125} (1987), 240--246.

\bibitem{Dr}\vskip-.2cm  V.~I.~Drinfeld: Hamiltonian structures on Lie groups, Lie bialgebras,
and the geometric meaning of Yang-Baxter equations,  {\em Dokl. Akad. Nauk SSSR}  {\bf 268} (1983), 285--287.

\bibitem{Gr92}\vskip-.2cm J.~Grabowski: Abstract Jacobi and Poisson structures. Quantization and
star-products, {\em J. Geom. Phys.} {\bf 9} (1992), no. 1, 45--73.

\bibitem{Gr00}\vskip-.2cm J.~Grabowski: Isomorphisms of Poisson and Jacobi brackets, in "Poisson
geometry" (Warsaw, 1998), 79--85, {\em Banach Center Publ.} {\bf 51}, Polish Acad. Sci., Warsaw, 2000.

\bibitem{Gr}\vskip-.2cm J.~Grabowski: Quasi-derivations and QD-algebroids, {\em Rep. Math.
Phys.} {\bf 52}, no. 3 (2003), 445--451.

\bibitem{GGU}\vskip-.2cm K.~Grabowska, J.~Grabowski and P.~Urba\'nski:
Geometrical Mechanics on algebroids, {\em Int. J. Geom. Meth. Mod. Phys.} {\bf 3} (2006), 559-575.

\bibitem{GIMPU}\vskip-.2cm J.~Grabowski, D.~Iglesias, J.~C.~Marrero, E.~Padr\'on, P.~Urba\'nski: Poisson-Jacobi
reduction of homogeneous tensors, {\em J. Phys. A}  {\bf 37} (2004), no. 20, 5383-–5399.

\bibitem{GIMPU1}\vskip-.2cm J.~Grabowski, D.~Iglesias, J.~C.~Marrero, E.~Padr\'on, P.~Urba\'nski:
Jacobi structures on affine bundles, {\em Acta Math. Sin. (Engl. Ser.)} {\bf 23} (2007), 769-–788.

\bibitem{GM1}\vskip-.2cm J.~Grabowski and G.~Marmo: Jacobi structures revisited, {\em J.
Phys. A: Math. Gen.} {\bf 34} (2001), 10975--10990.

\bibitem{GM2}\vskip-.2cm J.~Grabowski and G.~Marmo: The graded Jacobi algebras and (co)homology, {\em J.
Phys. A: Math. Gen.} {\bf 36} (2003), 161--181.

\bibitem{GR}\vskip-.2cm J.~Grabowski, M.~Rotkiewicz: Higher vector bundles and multi-graded symplectic
manifolds, {\em J.Geom. Phys.}  {\bf 59} (2009), 1285--1305.

\bibitem{GR1}\vskip-.2cm J.~Grabowski, M.~Rotkiewicz: Graded bundles and homogeneity structures,
{\em J.Geom. Phys.}  {\bf 62} (2011), 21--36.

\bibitem{GU}\vskip-.2cm J.~Grabowski, P.~Urba\'nski: Tangent lifts of Poisson and
related structures, {\em J. Phys. A: Math. Gen.}, {\bf 28} (1995), 6743--6777.


\bibitem{GU3} J.~Grabowski and P.~Urba\'nski: Lie algebroids
and Poisson-Nijenhuis structures, {\em Rep. Math. Phys.} {\bf 40}, (1997), 195--208.

\bibitem{GL}\vskip-.2cm F.~Gu\'edira and  A.~Lichnerowicz: G\'eom\'etrie des
alg\'ebres de Lie locales de Kirillov, {\em J. Math. pures et appl.}, {\bf 63} (1984), 407--484.

\bibitem{IM}\vskip-.2cm D.~Iglesias-Ponte and  J.~C.~Marrero:
 Generalized Lie bialgebroids and Jacobi structures, {\em J. Geom. Phys.},
{\bf 40} (2001), 176--1999.

\bibitem{IW}\vskip-.2cm D.~Iglesias-Ponte and A.~Wade: Contact manifolds and generalized complex structures,
{\em J. Geom. Phys.} {\bf 53} (2005), 249–-258.

\bibitem{Ka}\vskip-.2cm V.~G.~Kac: Lie superalgebras,  {\em Advances in Math.} {\bf 26} (1977), no. 1, 8–-96.

\bibitem{KSB}\vskip-.2cm Y.~Kerbrat, Z.~Souici-Benhammadi: Vari\'et\'es de Jacobi et
groupo\"{\i}des de contact, {\em C.R. Acad. Sci. Paris}, {\bf 317} S\'er. I (1993), 81-86.

\bibitem{Khu}\vskip-.2cm H.~M.~Khudaverdian: Semidensities on odd symplectic supermanifolds,
{\em Comm. Math. Phys.} {\bf 247} (2004), no. 2, 353--390.

\bibitem{Ki}\vskip-.2cm A.~A.~Kirillov: Local Lie algebras (Russian), {\em Uspekhi Mat. Nauk} {\bf 31}  (1976),
57--76.

\bibitem{KU}\vskip-.2cm  K.~Konieczna and P.~Urba\'nski: Double vector
bundles and duality, {\em Arch. Math. (Brno)} {\bf 35}, (1999), 59--95.

\bibitem{Kon}\vskip-.2cm
M.~Kontsevich: Deformation quantization of {P}oisson manifolds, {\em Lett. Math. Phys.} {\bf 66} (2003),
157--216.


\bibitem{YKS1}\vskip-.2cm Y.~Kosmann-Schwarzbach: Quasi, twisted, and all that$\ldots$in Poisson geometry
and Lie algebroid theory. In {\sl The breadth of symplectic and Poisson geometry, Festschrift in Honor of Alan
Weinstein}, {\em Progr. Math.} {\bf 232}, Birkh\"auser Boston, Boston, MA, 2005, pp. 363--389.

 \bibitem{Kos}\vskip-.2cmB.~Kostant: Graded manifolds, graded Lie theory, and prequantization, In {\sl Differential geometrical methods
in mathematical physics (Proc. Sympos., Univ. Bonn, Bonn, 1975)}, pp. 177--306. Lecture Notes in Math., Vol.
570, Springer, Berlin, 1977.

\bibitem{KS}\vskip-.2cm B.~Kostant and S.~Sternberg: Symplectic reduction, BRS cohomology and infinite-dimensional
Clifford algebras,  {\em Annals of Physic} {\bf 176} (1987), 49-–113.

\bibitem{LMP0}\vskip-.2cm M.~de~Le\'on, J.~C.~Marrero and E.~Padr\'on: Lichnerowicz-Jacobi cohomology,
{\em J. Phys. A} {\bf 30} (1997), no. 17, 6029-–6055.

\bibitem{LMP}\vskip-.2cm M.~de~Le\'on, J.~C.~Marrero and E.~Padr\'on:
On the geometric quantization of Jacobi manifolds, {\em J. Math. Phys.} {\bf 38} (12) (1997), 6185--6213.

\bibitem{LLMP} \vskip-.2cm M.~de~Le\'on, B.~L\'opez, J.~C.~Marrero and E.~Padr\'on: Lichnerowicz-Jacobi cohomology and
homology of Jacobi manifolds: modular class and duality, Preprint (1999),
{\em arXiv: math.DG/9910079}.

\bibitem{Li}\vskip-.2cm A.~Lichnerowicz: Les vari\'et\'es de Jacobi  et leurs
alg\'ebres de Lie associ\'ees, {\em J. Math. Pures Appl.}, {\bf 57} (1978), 453--488.

\bibitem{LWX}\vskip-.2cm Zhang-Ju Liu, A.~Weinstein, and Ping Xu:
Manin triples for {Lie} bialgebroids, {\em J. Diff. Geom.} {\bf 45} (1997), 547--574.

\bibitem{Mac1a}\vskip-.2cm K.~C.~H.~Mackenzie: {Double {L}ie algebroids and second-order geometry.
{I}}, {\em Adv. Math.} {\bf 94(2)} (1992), 180--239.

\bibitem{Mac2}\vskip-.2cm K.~C.~H.~Mackenzie: General theory of Lie groupoids and Lie
algebroids, Cambridge University Press, 2005.

\bibitem{MX}\vskip-.2cm K.~C.~H.~Mackenzie and P.~Xu: Lie bialgebroids and {P}oisson groupoids,
{\em Duke Math. J.} {\bf 73(2)} (1994), 415--452.

\bibitem{Meh}\vskip-.2cm R.~A.~Mehta: {\sl Supergroupoids, double structures, and equivariant cohomology},
PhD thesis (2006), University of California, Berkeley,  \texttt{arXiv:math.DG/0605356}.

\bibitem{Meh1}\vskip-.2cm R.~A.~Mehta: {\sl Differential graded contact geometry and Jacobi structures},
\texttt{arXiv:1111.4705}.

\bibitem{Ne}\vskip-.2cm E.~Nelson: {\sl Tensor Analysis}, Princeton University
Press and The University of Tokyo Press, Princeton 1967.

\bibitem{PW}\vskip-.2cm Y.~S.~Poon and A.~Wade: Approaches to generalize contact structures, {\em Pure Appl.
Math. Q.} {\bf 6} (2010), no. 2, {\sl Special Issue: In honor of Michael Atiyah and Isadore Singer}, 603–-622.

 \bibitem{Pr1}\vskip-.2cm J.~Pradines: Fibr\'es vectoriels doubles et calcul
des jets non holonomes (French), Notes polycopi\'ees, Amiens, 1974.

\bibitem{Rog}\vskip-.2cm A.~Rogers: Supermanifolds. Theory and Applications. {\em World Scientific Publishing Co. Pte. Ltd.}, Hackensack, NJ, 2007.

\bibitem{Roy0}\vskip-.2cm D.~Roytenberg: {\sl {Courant} algebroids, derived brackets and even symplectic
supermanifolds}, PhD thesis, {UC Berkeley}, 1999, \texttt{arXiv:math.DG/9910078}.

\bibitem{Roy}\vskip-.2cm D.~Roytenberg: On the structure of graded symplectic
supermanifolds and Courant algebroids, In {\sl Quantization, Poisson brackets and beyond (Manchester,
2001)},{\em Contemp. Math.} {\bf 315}, Amer. Math. Soc., Providence, RI, 2002, pp. 169--185.

\bibitem{Roy1}\vskip-.2cm D.~Roytenberg: AKSZ-BV Formalism and Courant Algebroid-induced Topological Field
Theories, \texttt{arXiv:hep-th/0608150}.

\bibitem{Sau}\vskip-.2cm D.~J.~Saunders: {\sl The Geometry of Jet Bundles}, London Mathematical
Society Lecture Note Series, 142., Cambridge University Press, Cambridge, 1989.

\bibitem{Sch0}\vskip-.2cm A.~Schwarz: Symplectic, contact and superconformal geometry, membranes and
strings, {\sl Supermembranes and physics in $2+1$ dimensions} (Trieste, 1989), 332--351, World Sci. Publ.,
River Edge, NJ, 1990.

\bibitem{Sch}\vskip-.2cm A.~Schwarz: Superanalogs of symplectic and contact geometry and their applications
to quantum field theory, {\sl Topics in Statistical and Theoretical Physics}, 203--218, {\em Amer. Math. Soc.
Transl. Ser. 2}, {\bf 177}, Amer. Math. Soc., Providence, RI, 1996.

\bibitem{Sev}\vskip-.2cm P.~{\v{S}}evera: Some title containing the words "homotopy" and "symplectic", e.g. this one,
{\em Travaux math\'ematiques}, Univ. Luxemb., {\bf 16} (2005), 121--137.

\bibitem{Sha}\vskip-.2cm V.~N.~Shander: Analogues of the Frobenius and Darboux theorems for supermanifolds,
{\em C. R. Acad. Bulgare Sci.} {\bf 36} (1983), no. 3, 309--312.

\bibitem{Tuy}\vskip-.2cm  G.~M.~Tuynman: Supermanifolds and Supergroups. Basic Theory. {\em Mathematics and its Applications},
570. Kluwer Academic Publishers, Dordrecht, 2004.

\bibitem{Va}\vskip-.2cm A.~Yu.~Va{\u\i}ntrob: Lie algebroids and homological vector fields,
{\em Uspekhi Matem. Nauk} {\bf 52(2)} (1997), 428--429.

\bibitem{Vs}\vskip-.2cm I.~Vaisman: The BV-algebra of a Jacobi manifold, {\em Ann. Polon. Math.} {\bf 73} (2000),
275–-290.

\bibitem{Var}\vskip-.2cm  V.~S.~Varadarajan: Supersymmetry for Mathematicians: An Introduction.
{\em Courant Lecture Notes in Mathematics}, 11. New York University, Courant Institute of Mathematical
Sciences, New York; American Mathematical Society, Providence, RI, 2004.

\bibitem{Vi} A.~M.~Vinogradov: {The logic algebra for the theory
of linear differential operators}, {\em Soviet. Mat. Dokl.} {\bf 13} (1972), 1058-1062.

\bibitem{Vor1}\vskip-.2cm T.~T. Voronov: Graded manifolds and Drienfeld doubles for Lie
bialgebroids, In {\sl Quantization, Poisson brackets and beyond (Manchester, 2001)}, {\em Contemp. Math.} {\bf 315}, Amer. Math. Soc., Providence, RI, 2002, pp. 131-168.

\bibitem{Vor2}\vskip-.2cm T.~T. Voronov: Mackenzie theory and Q-manifolds,  {\em
arXiv:math.DG/0608111}.

\bibitem{Wa}\vskip-.2cm A.~Wade: Conformal Dirac structures, {\em Lett. Math. Phys.} {\bf 53} (2000), 331-–348.

\end{thebibliography}
\end{document}